\theoremstyle{plain}
\newtheorem{theorem}{Theorem}[section]
\newtheorem{remark}[theorem]{Remark}
\newtheorem{lemma}[theorem]{Lemma}
\newtheorem{corollary}[theorem]{Corollary}
\newtheorem{proposition}[theorem]{Proposition}
\theoremstyle{definition}
\newtheorem{definition}[theorem]{Definition}
\newtheorem{notation}[theorem]{Notation}
\numberwithin{equation}{section}
\newcommand{\fct}[3]{\ensuremath{#1\colon #2\rightarrow #3}\xspace}
\newcommand{\fctw}[3]{$#1$ from $#2$ to $#3$\xspace}
\newcommand{\ca}{\mbox{$C\sp*$-}al\-ge\-bra\xspace}
\newcommand{\cas}{\mbox{$C\sp*$-}al\-ge\-bras\xspace}
\newcommand{\starhomo}{\mbox{$\sp*$-}ho\-mo\-morphism\xspace}
\newcommand{\stariso}{\mbox{$\sp*$-}iso\-morphism\xspace}
\newcommand{\Z}{\ensuremath{\mathbb{Z}}\xspace}
\newcommand{\N}{\ensuremath{\mathbb{N}}\xspace}
\newcommand{\K}{\ensuremath{\mathbb{K}}\xspace}
\newcommand{\ie}{\emph{i.e.}\xspace}
\newcommand{\cf}{\emph{cf.}\xspace}
\newcommand{\eg}{\emph{e.g.}\xspace}
\newcommand{\id}{\ensuremath{\operatorname{id}}\xspace}
\newcommand{\cok}{\operatorname{cok}}
\newcommand{\Ocal}{\ensuremath{\mathcal{O}}\xspace}
\newcommand{\FKR}{\ensuremath{\operatorname{FK}_\mathcal{R}}\xspace}
\newcommand{\FKRs}{\ensuremath{\operatorname{FK}_\mathcal{R}^s}\xspace}
\newcommand{\FK}{\ensuremath{\operatorname{FK}}\xspace}
\newcommand{\A}{\ensuremath{\mathfrak{A}}\xspace}
\newcommand{\B}{\ensuremath{\mathfrak{B}}\xspace}
\newcommand{\Asf}{\mathsf{A}}
\newcommand{\Bsf}{\mathsf{B}}
\newcommand{\Prim}{\operatorname{Prim}}
\newcommand{\calP}{\ensuremath{\mathcal{P}}\xspace}
\newcommand{\GLZ}[1][n]{\ensuremath{\operatorname{GL}(#1,\Z)}\xspace}
\newcommand{\GL}{\ensuremath{\operatorname{GL}}\xspace}
\newcommand{\GLPZ}[1][\mathbf{n}]{\ensuremath{\operatorname{GL}_\calP(#1,\Z)}\xspace}
\newcommand{\GLP}{\ensuremath{\operatorname{GL}_\calP}\xspace}
\newcommand{\SLPZ}[1][\mathbf{n}]{\ensuremath{\operatorname{SL}_\calP(#1,\Z)}\xspace}
\newcommand{\SLP}{\ensuremath{\operatorname{SL}_\calP}\xspace}
\newcommand{\MPZ}[1][\mathbf{m}\times\mathbf{n}]{\ensuremath{\mathfrak{M}_\calP(#1,\Z)}\xspace}
\newcommand{\MPZc}[1][\mathbf{m}\times\mathbf{n}]{\ensuremath{\mathfrak{M}^\circ_\calP(#1,\Z)}\xspace}
\newcommand{\MPZcc}[1][\mathbf{m}\times\mathbf{n}]{\ensuremath{\mathfrak{M}^{\circ\circ}_\calP(#1,\Z)}\xspace}
\newcommand{\MPZccc}[1][\mathbf{m}\times\mathbf{n}]{\ensuremath{\mathfrak{M}^{\circ\circ\circ}_\calP(#1,\Z)}\xspace}
\newcommand{\MPplusZ}[1][\mathbf{m}\times\mathbf{n}]{\ensuremath{\mathfrak{M}^+_\calP(#1,\Z)}\xspace}
\newcommand{\ftn}[3]{ #1 \colon #2 \rightarrow #3 }
\newcommand{\setof}[2]{\left\{ #1 \;\middle|\; #2 \right\}}
\newcommand{\GLPEe}{\GLP-equivalence\xspace}
\newcommand{\GLPEes}{\GLP-equivalences\xspace}
\newcommand{\SLPEe}{\SLP-equivalence\xspace}
\newcommand{\Meq}{\ensuremath{\sim_{M\negthinspace E}}\xspace}
\newenvironment{smallpmatrix}{\left(\begin{smallmatrix}}{\end{smallmatrix}\right)}
\newcommand{\first}[1]{}
\newcommand{\second}[1]{}
\newcommand{\third}[1]{}
\def\citefirst{\@ifnextchar[{\@with}{\@without}}
\def\@with[#1]{\cite[#1]{arXiv:1604.05439v1}\xspace}
\def\@without{\cite{arXiv:1604.05439v1}\xspace}
\title
{Strong classification of purely infinite Cuntz-Krieger algebras}
\date{\today}
\author{Toke Meier Carlsen}
\address{Department of Science and Technology, University of the Faroe Islands, N\'{o}at\'{u}n~3, FO-100 T\'{o}rshavn, the Faroe Islands}
\email{tokemc@setur.fo}
\author{Gunnar Restorff}
\address{Department of Science and Technology, University of the Faroe Islands, N\'{o}at\'{u}n~3, FO-100 T\'{o}rshavn, the Faroe Islands}
\email{gunnarr@setur.fo}
\author{Efren Ruiz}
\address{Department of Mathematics, University of Hawaii, Hilo, 200 W.~Kawili St., Hilo, Hawaii, 96720-4091 USA}
\email{ruize@hawaii.edu}
\keywords{Cuntz-Krieger algebras, Graph $C^*$-algebras, $K$-theory, Flow equivalence}
\subjclass[2010]{46L35, 46L80 (46L55, 37B10)}
\begin{document}

\begin{abstract}
In 2006, Restorff completed the classification of all Cuntz-Krieger algebras with finitely many ideals (\ie, those that are purely infinite) up to stable isomorphism.  He left open the questions concerning strong classification up to stable isomorphism and unital classification.  In this paper, we address both questions.  We show that any isomorphism between the reduced filtered $K$-theory of two Cuntz-Krieger algebras with finitely many ideals lifts to a \stariso between the stabilized Cuntz-Krieger algebras.  As a result, we also obtain strong unital classification.
\end{abstract}

\maketitle


\section{Introduction}

Historically there has been a lot of connection between classification of shifts of finite type, classification of Cuntz-Krieger algebras and classification of more general \cas. 
Franks made a successful classification of irreducible shifts of finite type up to flow equivalence (\cite{MR758893}), which R\o{}rdam used to classify simple, purely infinite Cuntz-Krieger algebras (\cite{MR1340839}) using a trick of Cuntz. 
This was shortly after generalized by Kirchberg-Phillips to a \emph{strong} classification of simple, nuclear, purely infinite \cas in the UCT-class, \ie, allowing for lifting of isomorphisms on the invariant level to the algebra level (\cf\ \cite{MR1745197,MR1796912}). 
Huang classified shifts of finite type with finite Bowen-Franks groups up to flow equivalence and used this to classify Cuntz-Krieger algebras with finite $K_0$-groups (\cite{MR1329907}).
Huang also classified two component shifts of finite type up to flow equivalence and used this to classify Cuntz-Krieger algebras with exactly one ideal (\cite{MR1304139,MR1301504}). This was generalized by R\o{}rdam to essential extensions of stable, simple, nuclear, purely infinite \cas in the UCT-class (\cite{MR1446202}). 
Eilers-Restorff and Restorff-Ruiz (\cite{MR2265044,MR2379290}) generalized this to cover classification for 
all essential extensions of simple, nuclear, purely infinite \cas in the UCT-class using results of Kirchberg and Bonkat (\cite{MR1796912,bonkat:phd}) --- allowing for strong classification in the unital and the stable cases. 
The work in \cite{MR2265044,MR2379290,arXiv:1301.7695v1} shows that for properly infinite \cas (or for \cas with stable weak cancellation), the key to solving the unital classification problem is to get a strong classification for the stable case.

Finally, Boyle and Huang classified general shifts of finite type up to flow equivalence (\cite{MR1907894,MR1990568}). Using this, Restorff classified all purely infinite Cuntz-Krieger algebras up to stable isomorphism --- with the problems of strong classification and unital classification left as open questions in the addendum (\cite[Questions~1 and 3]{MR2270572}). These questions will be answered in this paper.

Restorff, Meyer-Nest, Bentmann-K\"ohler have shown that stable, nuclear, purely infinite \cas in the UCT-class with finite ideal lattice are classified by their filtered $K$-theory if and only if their primitive ideal spaces are so-called accordion spaces --- and when there is classification, it is a strong classification (\cite{restorff:phd,MR2953205,arXiv:1101.5702v3}). In contrast, all finite ideal spaces appear as the ideal space of Cuntz-Krieger algebras. Arklint-Restorff-Ruiz (\cite{arXiv:1405.0672v1}) showed that there are purely infinite Cuntz-Krieger algebras with four primitive ideals for which there are automorphisms of the filtered $K$-theory that cannot be lifted to the stabilized algebra, while the work of Arklint-Restorff-Ruiz and Arklint-Bentmann-Katsura shows that all Cuntz-Krieger algebras with at most four primitive ideals are strongly classified by the \emph{reduced} filtered $K$-theory --- both up to stable isomorphism and up to unital isomorphism (\cite{arXiv:1405.0672v1,MR3349327}). 
This suggests that the correct invariant to strongly classify Cuntz-Krieger algebras is the reduced filtered $K$-theory. 

Move equivalence and Cuntz move equivalence of graphs, Cuntz-Krieger algebras and graph \cas and their interplay have recently also been important for developing results about classification of certain nonsimple \cas that are of mixed kind (\eg\ \cite{arXiv:1604.05439v1,Eilers-Restorff-Ruiz-Sorensen-2,arXiv:1602.03709v2,MR3056712,MR3142033,MR2562779,MR2563693}) --- since we will only focus on strong classification of purely infinite Cuntz-Krieger algebras, we will not examine this further here. 

In this paper, we will strongly classify all purely infinite Cuntz-Krieger algebras up to stable isomorphism and up to unital isomorphism using the reduced filtered $K$-theory as invariant, together with a way of representing the class of the unit for the unital classification (thus providing an answer to Question 1 and a positive answer to Question~3 in the addendum in \cite{MR2270572}). 
We will do so drawing heavily on results from \cite{MR1907894,MR1990568,MR2270572,arXiv:1301.7695v1,arXiv:1604.05439v1,arXiv:1602.03709v2} and using graph \cas instead of Cuntz-Krieger algebras will be of key importance for the proof. 
Along the way, we will therefore show some results in slightly greater generality than strictly needed for our main theorems --- these results will be of great importance in a forthcoming paper about the geometric classification of general unital graph \cas (\cite{Eilers-Restorff-Ruiz-Sorensen-2}).

We also note that Arklint-Bentmann-Katsura have a very nice range result characterizing the range of the reduced invariant for all purely infinite Cuntz-Krieger algebras --- both with and without the class of the unit (\cite{MR3177344}). 

Before we state the main theorems in Section~\ref{sec:main}, we introduce in Section~\ref{sec:prelim} some notation and definitions needed to state the main results. In Section~\ref{sec:main}, we state the the main results of this paper, and we then explain the strategy of the proofs of these results, and introduce some notation and definitions we use in the proofs. In Section~\ref{sec:lifting}, we strengthen some results of \cite{MR1990568}, and in Section~\ref{sec:edge}, we describe how edge expanding a graph gives rise to stable isomorphism of graph algebras and \SLPEe of corresponding matrices derived from the graphs. In Section~\ref{sec:toke}, we examine how row and column addition of matrices give rise to stable isomorphism of graph \cas, and describe what maps on the reduced filtered $K$-theory these stable isomorphisms induce. Finally, in Section~\ref{sec:cuntzsplice} we look at how Cuntz splicing a graph gives rise to stable isomorphism of graph \cas and describe what maps on the reduced filtered $K$-theory such a stable isomorphism induces, before we give the proofs of the main theorems.

\section{Preliminaries}
\label{sec:prelim}

We now introduce and recall some notation and definitions needed for stating the main results of this paper. 
Let $A$ be an $n\times n$ matrix with entries in $\N_0$, the nonnegative integers, and let $A(i,j)$ denote the $(i,j)$'th entry of $A$. 
We say that $A$ is \emph{nondegenerate} if all its rows and columns are nonzero.

For every nondegenerate $n\times n$ matrix $A$ with entries from $\{0,1\}$, the \emph{Cuntz-Krieger algebra}, $\Ocal_A$, associated with $A$ is the universal (unital) \ca generated by $n$ partial isometries $s_1,\ldots,s_n$ satisfying the relations
\begin{align*}
\mathbf{1}&=s_1s_1^*+\cdots+s_ns_n^*, \\
s_i^*s_i&=\sum_{j=1}^n A(i,j)s_js_j^*, \qquad\text{for all }i=1,\ldots,n.
\end{align*}

It is a well-known fact that Cuntz-Krieger algebras are unital, separable, nuclear \cas in the UCT-class. 
In \cite{MR608527}, Cuntz defined Condition~(II) for a nondegenerate matrix with entries from $\{0,1\}$. 
A nondegenerate matrix $A$ with entries from $\{0,1\}$ satisfies Condition~(II) if and only if the Cuntz-Krieger algebra $\Ocal_A$ is purely infinite if and only if the Cuntz-Krieger algebra $\Ocal_A$ has finitely many ideals if and only if the Cuntz-Krieger algebra $\Ocal_A$ has real rank zero. See \cite{MR2270572} and the references therein. 

For a \ca \A, we let $\Prim\A$ denote the primitive ideal space equipped with the usual hull-kernel topology. 
It is a well-known fact that for every open subset $O$ of $\Prim\A$ the set $\A(O)=\cap((\Prim\A)\setminus O)$ is a (two-sided, closed) ideal of \A, and that this is a lattice isomorphism from the lattice of open subsets of $\Prim\A$ to the lattice of ideals of \A. 

If \A has finitely many ideals, then for every subset $S$ of $\Prim\A$ there exists a smallest open subset $\widetilde S$ of $\Prim\A$ that contains $S$. For convenience, we will use the notation $\widetilde{\mathfrak{p}}$ for $\widetilde{\{\mathfrak{p}\}}$, and note that $\widetilde{\partial}\mathfrak{p} :=\widetilde{\mathfrak{p}}\setminus\{\mathfrak{p}\}$ is also open. The following definition is a reformulation of \cite[Definition~4.1]{MR2270572}, the definition can be generalized to \cas over finite $T_0$-spaces (see \eg\  \cite{MR2949216,MR3349327,arXiv:1604.05439v1}). 

\begin{definition}
Let \A be a \ca with finitely many ideals. 
Let 
\begin{align*}
I_0(\A)&=\setof{\widetilde{\mathfrak{p}}}{\mathfrak{p}\in\Prim\A}
\cup\setof{\widetilde{\partial}\mathfrak{p}}{\mathfrak{p}\in\Prim\A},\\
C(\A)&=\setof{(\mathfrak{p},\mathfrak{q})\in\Prim\A\times\Prim\A}{\widetilde{\mathfrak{p}}\subsetneq\widetilde{\partial}\mathfrak{q}\text{ and }\not\exists \mathfrak{p}'\in \Prim\A\text{ such that }\widetilde{\mathfrak{p}}\subsetneq\widetilde{\mathfrak{p}'}\subseteq \widetilde{\partial}\mathfrak{q}}.
\end{align*}
The \emph{reduced filtered $K$-theory} of \A, $\FK_{\textrm{red}}(\A)$, consists of $\Prim\A$ and the families 
$$(K_0(\A(O)))_{O\in I_0(\A)},\qquad (K_0(\A(\widetilde{\mathfrak{p}})/\A(\widetilde{\partial}\mathfrak{p})))_{\mathfrak{p}\in \Prim\A},\qquad
(K_1(\A(\widetilde{\mathfrak{p}})/\A(\widetilde{\partial}\mathfrak{p})))_{\mathfrak{p}\in \Prim\A},$$
together with the sequences 
$$\xymatrix@C=15pt@R=12pt{
K_1(\A(\widetilde{\mathfrak{p}})/\A(\widetilde{\partial}\mathfrak{p}))
\ar[r]&
K_0(\A(\widetilde{\partial}\mathfrak{p}))\ar[r]&
K_0(\A(\widetilde{\mathfrak{p}}))\ar[r] & 
K_0(\A(\widetilde{\mathfrak{p}})/\A(\widetilde{\partial}\mathfrak{p}))}$$
for every $\mathfrak{p}\in\Prim\A$ and 
$$\xymatrix@C=15pt{& K_0(\widetilde{\mathfrak{p}})\ar[r] & K_0(\widetilde{\partial}\mathfrak{q})}$$
for every pair $(\mathfrak{p},\mathfrak{q})\in C(\A)$
originating from the cyclic six term exact sequences induced by the extensions $\A(\widetilde{\partial}\mathfrak{p})\hookrightarrow 
\A(\widetilde{\mathfrak{p}})\twoheadrightarrow
\A(\widetilde{\mathfrak{p}})/\A(\widetilde{\partial}\mathfrak{p})$
and 
$\A(\widetilde{\mathfrak{p}})\hookrightarrow 
\A(\widetilde{\partial}\mathfrak{q})\twoheadrightarrow 
\A(\widetilde{\partial}\mathfrak{q})/\A(\widetilde{\mathfrak{p}})$, respectively.

Let $\B$ be another \ca with finitely many ideals. 
By a \emph{reduced filtered $K$-theory isomorphism} \fctw{(\kappa,\rho)}{\FK_{\textrm{red}}(\A)}{\FK_{\textrm{red}}(\B)} we understand isomorphisms
\begin{align*}
\rho\colon \Prim\A&\rightarrow\Prim\B, &&\text{homeomorphism},\\
\kappa^{0,i}_O\colon K_0(\A(O))&\rightarrow K_0(\B(\rho(O))), &&\text{for }O\in I_0(\A),\\
\kappa^{0,q}_\mathfrak{p}\colon K_0(\A(\widetilde{\mathfrak{p}})/\A(\widetilde{\partial}\mathfrak{p}))&\rightarrow K_0(\B(\widetilde{\rho(\mathfrak{p})})/\B(\widetilde{\partial}\rho(\mathfrak{p}))), &&\text{for }\mathfrak{p}\in \Prim\A,\\
\kappa^{1,q}_\mathfrak{p}\colon K_1(\A(\widetilde{\mathfrak{p}})/\A(\widetilde{\partial}\mathfrak{p}))&\rightarrow K_1(\B(\widetilde{\rho(\mathfrak{p})})/\B(\widetilde{\partial}\rho(\mathfrak{p}))), &&\text{for }\mathfrak{p}\in \Prim\A,
\end{align*}
such that all the ladders coming from sequences in $\FK_{\textrm{red}}(\A)$ and $\FK_{\textrm{red}}(\B)$ commute. 

It is clear that any \stariso $\Psi$ from $\A$ to $\B$ or from $\A\otimes\K$ to $\B\otimes\K$ induces in a canonical way an isomorphism from $\FK_{\mathrm{red}}(\A)$ to $\FK_{\mathrm{red}}(\B)$. We denote these by $\FK_{\mathrm{red}}(\Psi)$ and $\FK_{\mathrm{red}}^s(\Psi)$, respectively. 
\end{definition}

There is a nice description of the ideal lattice and the $K$-theory of all subquotients as well as of the occurring cyclic six-term exact sequences in terms of the underlying matrix (see \cite{MR2270572} and the references therein).

\begin{definition}\label{def:red-filtered-Ktheory-class-unit}
Let $\A$ and $\B$ be \cas with finitely many ideals and having real rank zero.  By \cite[Lemma~8.3]{MR3349327}, every isomorphism \fctw{(\kappa,\rho)}{\FK_{\textrm{red}}(\A)}{\FK_{\textrm{red}}(\B)} induces a unique isomorphism \fct{\kappa_0}{ K_0 (\A) }{ K_0 ( \B ) }.  If $\A$ and $\B$ are unital, we say the reduced filtered $K$-theory isomorphism \fctw{(\kappa,\rho)}{\FK_{\textrm{red}}(\A)}{\FK_{\textrm{red}}(\B)} \emph{preserves the class of the unit} if $\kappa_0 ( [ 1_\A]_0 ) = [ 1_\B]_0$ in $K_0 (\B)$.  We write $( \FK_{\textrm{red}} (\A) , [1_\A]_0 ) \cong ( \FK_{\textrm{red}} (\B) , [1_\B]_0 )$ whenever such an isomorphism exists.
\end{definition}

\section{Main results}

\label{sec:main}

We now state the two main theorems of the paper. The proofs of Theorems~\ref{thm:strong-stable} and~\ref{thm:strong-unital} are given at the end of Section~\ref{sec:cuntzsplice}.

\begin{theorem}\label{thm:strong-stable}
Let $A$ and $A'$ be nondegenerate matrices with entries from $\{0,1\}$ satisfying Condition~(II). 
For every isomorphism \fctw{(\kappa,\rho)}{\FK_{\mathrm{red}}(\Ocal_A)}{\FK_{\mathrm{red}}(\Ocal_{A'})} there exists a \stariso \fctw{\Psi}{\Ocal_A\otimes\K}{\Ocal_{A'}\otimes\K} such that $\FK_{\mathrm{red}}^s(\Psi)=(\kappa,\rho)$. 
\end{theorem}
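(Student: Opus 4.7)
The plan is a boot-strapping argument: invoke the stable classification already established by Restorff~\cite{MR2270572} to reduce the theorem to realizing every automorphism of $\FK_{\mathrm{red}}(\Ocal_{A'})$ by some stable \stariso, and then realize a generating set of such automorphisms by explicit graph moves whose effect on $\FK_{\mathrm{red}}$ is computed along the way.

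\textbf{Reduction.} By Restorff's stable classification of purely infinite Cuntz--Krieger algebras, the mere existence of the isomorphism $(\kappa,\rho)$ yields SOME stable \stariso $\Phi\colon\Ocal_A\otimes\K\to\Ocal_{A'}\otimes\K$. Setting $(\kappa^\Phi,\rho^\Phi):=\FK_{\mathrm{red}}^s(\Phi)$, if we can produce a stable \stariso $\Theta$ of $\Ocal_{A'}\otimes\K$ with $\FK_{\mathrm{red}}^s(\Theta)=(\kappa,\rho)\circ(\kappa^\Phi,\rho^\Phi)^{-1}$, then $\Psi:=\Theta\circ\Phi$ does the job. It therefore suffices to prove that every automorphism of $\FK_{\mathrm{red}}(\Ocal_{A'})$ lies in the image of $\FK_{\mathrm{red}}^s$.

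\textbf{Decomposition and realization by moves.} I would pass to the graph \ca picture, writing $\Ocal_{A'}\cong C^*(E)$ for a graph $E$ naturally associated with $A'$, since the moves in play do not preserve the Cuntz--Krieger class. The strengthening of Boyle--Huang in Section~\ref{sec:lifting} is set up to express any automorphism of the reduced filtered $K$-theory as a finite composition whose building blocks correspond to three kinds of elementary operations on the matrix/graph: edge expansions (Section~\ref{sec:edge}, which change the matrix while leaving \FKR rigidly controlled and give \SLPEes), row/column additions (Section~\ref{sec:toke}, whose induced maps on \FKR are computed explicitly and which are realized by concrete stable \starisos), and Cuntz splicings (Section~\ref{sec:cuntzsplice}, handling the remaining "sign"-type automorphisms of \FKR). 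Each move is lifted individually in the indicated section; concatenating the lifts along the word provided by the Boyle--Huang decomposition gives the desired $\Theta$.

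\textbf{Main obstacle.} The hard part is not realizing any one move — those constructions are essentially explicit — but proving that the three families of moves together exhaust \emph{all} automorphisms of \FKR. This is delicate because, as the counterexamples of~\cite{arXiv:1405.0672v1} show, automorphisms of the unreduced filtered $K$-theory need not lift; so one has to argue that passage to the \emph{reduced} invariant removes precisely the obstruction, and that the effect of row/column additions on the connecting maps in the six-term ladders (and the sign ambiguity handled by Cuntz splicing) covers all residual freedom. Carefully tracking the identification isomorphisms between the matrix-level description of \FKR and the C*-algebraic one, and bookkeeping the action of each move on every component of \FKR (including the maps indexed by the covering relation $C(\A)$) is where the real work lies, and this is what the results of Sections~\ref{sec:lifting}--\ref{sec:cuntzsplice} are designed to supply.
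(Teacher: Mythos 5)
Your outline is correct, and for the substantive part it coincides with the paper's argument; the one genuine structural difference is your opening reduction. You first invoke Restorff's stable classification \cite{MR2270572} to obtain \emph{some} stable \stariso $\Phi$ and then reduce to realizing an arbitrary automorphism of $\FK_{\mathrm{red}}(\Ocal_{A'})$. The paper never makes this reduction: it lifts the given isomorphism $(\kappa,\rho)$ \emph{directly} to a \GLPEe between $\Bsf_E$ and $\Bsf_F$ after standardizing both graphs (Lemmas~\ref{lem:stdform} and~\ref{lem:enlargen}, Corollary~\ref{cor:BH-4.6-B}), corrects the blockwise determinant mismatch by Cuntz splicing (Corollary~\ref{cor:cuntzspliceinvariant}), and factors the resulting \SLPEe into elementary positive equivalences via Boyle's positive factorization theorem \cite[Theorem~4.4]{MR1907894}, each factor being realized by the explicit equivariant \starisos of Proposition~\ref{prop:toke}; see Theorem~\ref{thm:X-stable-strong}. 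Your reduction is logically sound (functoriality of $\FK_{\mathrm{red}}^s$ makes $\Theta\circ\Phi$ induce $(\kappa,\rho)$), but it buys nothing: realizing an arbitrary automorphism requires exactly the same lifting--splicing--factorization machinery as realizing an arbitrary isomorphism, so the detour through \cite{MR2270572} is redundant. One correction of attribution: the decomposition into elementary moves is not supplied by the strengthened Boyle--Huang results of Section~\ref{sec:lifting} --- those produce a single \GLPEe $(U,V)$ inducing the given map on $\FKR$ --- but by Boyle's positive factorization theorem, which is also why the matrices must first be brought into the positive form $\MPplusZ[\mathbf{n}]$; that, rather than realizing a class of automorphisms on their own, is the actual role of the edge expansions and auxiliary row and column additions of Sections~\ref{sec:edge} and~\ref{sec:toke}. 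Your ``main obstacle'' paragraph correctly locates the crux (exhausting all isomorphisms of the \emph{reduced} invariant and bookkeeping the identifications), and that is precisely what Corollary~\ref{cor:BH-4.6-B} together with Steps 1--6 of the proof of Theorem~\ref{thm:X-stable-strong} supply.
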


\begin{theorem}\label{thm:strong-unital}
Let $A$ and $A'$ be nondegenerate matrices with entries from $\{0,1\}$ satisfying Condition~(II). 
For every isomorphism \fctw{(\kappa,\rho)}{\FK_{\mathrm{red}}(\Ocal_A)}{\FK_{\mathrm{red}}(\Ocal_{A'})} that preserves the class of the unit, there exists a \stariso \fctw{\Psi}{\Ocal_A}{\Ocal_{A'}} such that $\FK_{\mathrm{red}}(\Psi)=(\kappa,\rho)$. 
\end{theorem}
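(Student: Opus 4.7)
The plan is to bootstrap Theorem~\ref{thm:strong-unital} from the stable version, Theorem~\ref{thm:strong-stable}, using the fact that Cuntz-Krieger algebras satisfying Condition~(II) are unital, separable, purely infinite and in particular have properly infinite units. As indicated in the introduction, ``the key to solving the unital classification problem is to get a strong classification for the stable case,'' so the main work has already been invested in Theorem~\ref{thm:strong-stable}.

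First I would apply Theorem~\ref{thm:strong-stable} to the given reduced filtered $K$-theory isomorphism $(\kappa,\rho)$ (which by hypothesis preserves the class of the unit) to produce a \stariso $\Phi\colon\Ocal_A\otimes\K\to\Ocal_{A'}\otimes\K$ with $\FK_{\mathrm{red}}^s(\Phi)=(\kappa,\rho)$. By \cite[Lemma~8.3]{MR3349327} the induced isomorphism $\kappa_0\colon K_0(\Ocal_A)\to K_0(\Ocal_{A'})$ depends only on $(\kappa,\rho)$, and the canonical identifications $K_0(\Ocal_A\otimes\K)\cong K_0(\Ocal_A)$ send $[1_{\Ocal_A}\otimes e_{11}]_0$ to $[1_{\Ocal_A}]_0$. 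Thus the assumption that $(\kappa,\rho)$ preserves the class of the unit translates to
\[
[\Phi(1_{\Ocal_A}\otimes e_{11})]_0=[1_{\Ocal_{A'}}\otimes e_{11}]_0\quad\text{in }K_0(\Ocal_{A'}\otimes\K).
\]

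Next I would invoke the properly infinite corner argument: since $\Ocal_{A'}$ is unital and purely infinite, both $p:=\Phi(1_{\Ocal_A}\otimes e_{11})$ and $q:=1_{\Ocal_{A'}}\otimes e_{11}$ are full, properly infinite projections in $\Ocal_{A'}\otimes\K$ with the same class in $K_0$. Hence they are Murray--von Neumann equivalent in $\Ocal_{A'}\otimes\K$; pick a partial isometry $v$ with $v^*v=p$ and $vv^*=q$. Then $\Psi(\cdot):=v\Phi(\cdot)v^*$ (after identifying $\Ocal_A$ with its corner $(1_{\Ocal_A}\otimes e_{11})(\Ocal_A\otimes\K)(1_{\Ocal_A}\otimes e_{11})$ and $\Ocal_{A'}$ with the analogous corner) yields a \stariso $\Psi\colon\Ocal_A\to\Ocal_{A'}$. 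This is essentially the machinery of \cite{arXiv:1301.7695v1}, whose output in the properly infinite / stable weak cancellation setting is precisely a unital lift of a stable isomorphism once the class of the unit is preserved.

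Finally I would verify that $\FK_{\mathrm{red}}(\Psi)=(\kappa,\rho)$. The point is that cutting down by the partial isometry $v$ induces an inner \textup{(}hence trivial on $K$-theory\textup{)} endomorphism of $\Ocal_{A'}\otimes\K$, so $\FK_{\mathrm{red}}^s(\operatorname{Ad} v\circ\Phi)=\FK_{\mathrm{red}}^s(\Phi)=(\kappa,\rho)$; and since the corner embeddings $\Ocal_A\hookrightarrow\Ocal_A\otimes\K$ and $\Ocal_{A'}\hookrightarrow\Ocal_{A'}\otimes\K$ are full and induce the canonical natural identifications on reduced filtered $K$-theory, the restriction of $\operatorname{Ad} v\circ\Phi$ to the corner (i.e.~$\Psi$) induces exactly $(\kappa,\rho)$ on $\FK_{\mathrm{red}}$. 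I expect the only delicate point here to be the bookkeeping that the induced map on each subquotient $K$-group through the corner identification matches $\kappa$; this is, however, a standard naturality check, and the substantive work really sits inside Theorem~\ref{thm:strong-stable}.
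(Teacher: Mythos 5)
Your proposal is correct and follows essentially the same route as the paper: the paper deduces the unital statement from the stable one (Theorem~\ref{thm:X-stable-strong}) via Definition~\ref{def:red-filtered-Ktheory-class-unit}, \cite[Lemma~8.3]{MR3349327} and \cite[Theorem~3.3]{arXiv:1301.7695v1} (with stable weak cancellation supplied by \cite[Corollary 7.2]{MR2310414}), which is precisely the full-properly-infinite-corner argument you spell out by hand. The only cosmetic imprecision is calling $\operatorname{Ad}v$ an ``inner endomorphism'' of $\Ocal_{A'}\otimes\K$ --- it is only a map between the two corners --- but the underlying naturality of full-corner identifications on (reduced filtered) $K$-theory is exactly what the cited theorem packages.
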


We immediately get the following corollary, where the first half of course is already proved earlier in \cite{MR2270572}.
\begin{corollary}
Let $A$ and $A'$ be nondegenerate matrices with entries from $\{0,1\}$ satisfying Condition~(II). 

Then $\Ocal_A\otimes\K\cong\Ocal_{A'}\otimes\K$ if and only if there exists an isomorphism from $\FK_{\mathrm{red}}(\Ocal_A)$ to $\FK_{\mathrm{red}}(\Ocal_{A'})$, and, moreover, $\Ocal_A\cong\Ocal_{A'}$ if and only if there exists an isomorphism from $\FK_{\mathrm{red}}(\Ocal_A)$ to $\FK_{\mathrm{red}}(\Ocal_{A'})$ that preserves the class of the unit.
\end{corollary}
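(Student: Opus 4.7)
The corollary is essentially immediate from Theorems~\ref{thm:strong-stable} and~\ref{thm:strong-unital}, together with the functoriality remark following the definition of reduced filtered $K$-theory. My plan is to simply unfold each of the two biconditionals into its two directions and explain what supplies each one.

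For the stable case, the ``if'' direction is precisely the content of Theorem~\ref{thm:strong-stable}: given any reduced filtered $K$-theory isomorphism $(\kappa,\rho)$, the theorem produces a \stariso $\Psi\colon\Ocal_A\otimes\K\to\Ocal_{A'}\otimes\K$ (in fact one lifting $(\kappa,\rho)$, which is more than we need). For the ``only if'' direction, any \stariso $\Psi\colon\Ocal_A\otimes\K\to\Ocal_{A'}\otimes\K$ induces, as noted in the paragraph immediately after the definition of $\FK_{\mathrm{red}}$, a canonical isomorphism $\FK_{\mathrm{red}}^s(\Psi)\colon\FK_{\mathrm{red}}(\Ocal_A)\to\FK_{\mathrm{red}}(\Ocal_{A'})$, simply because $\Psi$ maps the ideals $\Ocal_A(O)\otimes\K$ onto ideals of $\Ocal_{A'}\otimes\K$ and thus gives a homeomorphism of primitive ideal spaces and a compatible family of $K$-theoretic isomorphisms.

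For the unital case, the ``if'' direction is exactly Theorem~\ref{thm:strong-unital}. For the ``only if'' direction, given a \stariso $\Psi\colon\Ocal_A\to\Ocal_{A'}$, the induced isomorphism $\FK_{\mathrm{red}}(\Psi)$ automatically sends $[1_{\Ocal_A}]_0$ to $[\Psi(1_{\Ocal_A})]_0=[1_{\Ocal_{A'}}]_0$. Here the uniqueness of $\kappa_0$ from \cite[Lemma~8.3]{MR3349327} (invoked in Definition~\ref{def:red-filtered-Ktheory-class-unit}) ensures that the map on $K_0$ of the whole algebra determined by $\FK_{\mathrm{red}}(\Psi)$ is indeed the map $\Psi_*$ coming from $\Psi$, so preservation of the class of the unit is automatic.

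There is no real obstacle here; the only thing one needs to be careful about is that in the unital ``only if'' direction the canonically induced $\kappa_0$ is the one coming from $\Psi_*\colon K_0(\Ocal_A)\to K_0(\Ocal_{A'})$, which follows from its uniqueness characterization. The first statement of the corollary (stable isomorphism) has already been proved in \cite{MR2270572}, so the new content of the corollary is really just the unital biconditional, which is an immediate repackaging of Theorem~\ref{thm:strong-unital}.
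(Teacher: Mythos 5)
Your proposal is correct and matches the paper's (implicit) argument: the paper states the corollary follows immediately from Theorems~\ref{thm:strong-stable} and~\ref{thm:strong-unital} together with the canonical functoriality of $\FK_{\mathrm{red}}$ under \starisos, which is exactly the unfolding you give. Your extra care about why the induced $\kappa_0$ agrees with $\Psi_*$ in the unital ``only if'' direction is a reasonable elaboration of a point the paper leaves tacit.
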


\subsection{Proofs of Theorems~\ref{thm:strong-stable} and~\ref{thm:strong-unital}.}
The proofs of Theorem~\ref{thm:strong-stable} and Theorem~\ref{thm:strong-unital} will be carried out in the remaining sections of the paper. 
The idea of the proof of Theorem~\ref{thm:strong-stable} is relatively simple. The isomorphism $(\kappa,\rho)$ is induced by a \GLPEe between $A-I$ and $A'-I$. By doing Cuntz-splices on $A'$ (see Section~\ref{sec:cuntzsplice}), we can change this \GLPEe into an \SLPEe, and by the results of Boyle in \cite{MR1907894} such an \SLPEe can be decomposed into a composition of \SLPEe{s} given by elementary matrices. Finally, for each \SLPEe given by an elementary matrix we can construct a stable isomorphism that induces the same map on the filtered $K$-theory as the \SLPEe (see Section~\ref{sec:toke}). Since \GLPEe{s} might take us out of the class of $\{0,1\}$ matrices, we work in the rest of the paper with graph \cas rather than Cuntz-Krieger algebras.
Also for the benefit of later applications, we prove some results in slightly greater generality than we need --- allowing for singular vertices sometimes. 
Theorem~\ref{thm:strong-unital} will follow from Theorem~\ref{thm:strong-stable} and \cite[Theorem~3.3]{arXiv:1301.7695v1}.

\subsection{Definitions and notation}

We adopt verbatim the definitions and the notation from \cite{arXiv:1604.05439v1} --- in particular Sections \ref{I-genprel}, \ref{I-gipis} and \ref{I-sec:notation-for-proof}. Since all notation needed is defined and explained in \cite{arXiv:1604.05439v1}, we will only recall some of it here, but refer the reader to that paper for the rest.

\begin{definition}
For a countable graph $E = (E^0 , E^1 , r, s)$ (\ie, $E^0$ and $E^1$ are countable sets), we define its \emph{adjacency matrix} $\Asf_E$ as an $E^0\times E^0$ matrix with the $(u,v)$'th entry being
$$\left\vert\setof{e\in E^1}{s(e)=u, r(e)=v}\right\vert.$$
Since $E$ is countable, $\Asf_E$ will be a finite matrix or a countably infinite matrix, and it will have entries from $\N_0\sqcup\{\infty\}$.
\end{definition}

It will be convenient for us to alter the adjacency matrix of a graph in two very specific ways, removing singular rows and subtracting the identity, so we introduce notation for this. 

\begin{notation}
Let $E$ be a graph and $\Asf_E$ its adjacency matrix.  The matrix $\Asf_{E}^\bullet$ will denote the matrix obtained from $\Asf_{E}$ by removing all rows corresponding to singular vertices of $E$.  We define $\Bsf_E := \Asf_{E} - I$ and let $\Bsf_{E}^\bullet$ be $\Bsf_E$ with the rows corresponding to singular vertices of $E$ removed. 

We will mainly be working with graphs $E$ such that $\Bsf_E^\bullet \in \MPZ$, $\Bsf_E \in \MPZc$, $\Bsf_E \in \MPZcc$, or $\Bsf_E \in \MPZccc$ (see \citefirst[Definition~\ref{I-def:blockmatrices} and Definition~\ref{I-def:circ}]).
If $E$ is a finite graph with no sinks, then $\Bsf_E^\bullet=\Bsf_E$.
\end{notation}

\begin{notation}
For an arbitrary unital graph \ca $C^*(E)$, we will consider a slightly different definition of reduced filtered $K$-theory.  We refer the reader to \citefirst[Sections~\ref{I-sec:reducedKtheory} and~4.3], for the definition of $\FKR(X; C^*(E))$ and $\FKR(\calP;C^*(E))$, where $X$ is a finite $T_0$-space and $\calP$ is a partially ordered set.
When $E$ is a finite graph with no sinks and no sources that satisfies Condition~(K) (\ie, $C^*(E)$ is isomorphic to a purely infinite Cuntz-Krieger algebra), then the two definitions of reduced filtered $K$-theory are equivalent.
\end{notation}

\begin{remark}
Let $E$ and $F$ be graphs such that $\Bsf_E^\bullet$ and $\Bsf_F^\bullet$ are elements in \MPZ.  Let $(U,V)$ be a \GLPZ-equivalence or an \SLPZ-equivalence (see \citefirst[Definition~\ref{I-def:glpandslpeq}]) from $\Bsf_E^\bullet$ to $\Bsf_F^\bullet$.  As explained in \citefirst[Section~\ref{I-sec:notation-for-proof}], in the induced isomorphisms on the reduced filtered $K$-theory, $\ftn{ \FKR(U,V) }{ \FKR (\calP , C^*(E) )}{  \FKR (\calP , C^*(F) ) }$, $V^\mathsf{T}$ induces the map on $K_0$ and $(U^\mathsf{T})^{-1}$ induces the map on $K_1$.
Also, we let $\FKRs(U,V)$ denote the canonically induced isomorphism from $\FKR (\calP , C^*(E) \otimes \K )$ to $\FKR (\calP , C^*(F) \otimes \K )$.
\end{remark}

\subsection{Strategy of proof and structure of the paper}  The outline of the proof of Theorem~\ref{thm:strong-stable} is as follows.  We emulate the previous proofs that go from filtered $K$-theory data to stable isomorphism or flow equivalence, as in \cite{MR1990568, MR1907894, MR2270572}.
A key component of those proofs is manipulation of the matrix $\Bsf_E$, in particular that we can perform certain basic row and column operations without changing the stable isomorphism class or the flow equivalence class, depending on the context (\cf\ \cite[Propositions~\ref{CS-prop:columnAdd} and~\ref{CS-prop:rowAdd}]{arXiv:1602.03709v2}).
In Section~\ref{sec:toke}, we show how to describe the isomorphisms these matrix operations induce on the (stabilized) graph \cas and their reduced filtered $K$-theory. 
In \cite{arXiv:1602.03709v2}, it was proved that Cuntz splicing a graph gives a graph whose \ca is stably isomorphic to the \ca of the original graph (generalizing a result in \cite{MR2270572}). As we are proving a strong classification result, we will need to know more about what this stable isomorphism induces on the reduced filtered $K$-theory. We deal with this in Section~\ref{sec:cuntzsplice}.
We will also need to increase the size of our matrices using edge expansion and we keep track of the induced isomorphism on reduced filtered $K$-theory.  This is done in Section~\ref{sec:edge}.
Since the reduced filtered $K$-theory of $C^*(E)$ contains more groups than the $K$-web of $\Bsf_E$, we need to strengthen some of the results of \cite{MR1990568}. This is done in Section~\ref{sec:lifting}. 
Using these results, our proof of Theorem~\ref{thm:strong-stable} goes through 6 steps.

\begin{enumerate}[(1)]
\item[Step 1] First, the homeomorphism $\rho$ from $\Prim (\Ocal_A)$ to $\Prim (\Ocal_{A'})$ induces an action as a \ca over $\Prim (\Ocal_A)$ on $C^*(\Ocal_{A'})$.  Thus, $\kappa$ induces an isomorphism from $\FKR( \Prim ( \Ocal_A ) , \Ocal_A )$ to $\FKR( \Prim ( \Ocal_A ), \Ocal_{A'})$.

\item[Step 2] Secondly, we note that we might as well show the theorem for the graph \cas $C^*(E)$ and $C^*(E')$ instead of the Cuntz-Krieger algebras $\Ocal_A$ and $\Ocal_{A'}$, where $E$ and $E'$ are finite graphs satisfying Condition~(K) with no sinks and no sources, since every Cuntz-Krieger algebra is isomorphic to such a graph \ca. 
The reason is, that it is important to our proof techniques to be able to work with matrices that are not $\{0,1\}$-matrices, since we will do row and column operations as mentioned above. 

\item[Step 3] Now, we reduce the problem by finding graphs $E_1$ and $E_1'$ in a certain standard form such that we have equivariant isomorphisms from $C^*(E) \otimes\K$ to $C^*(E_{1}) \otimes\K$ and from $C^*(E') \otimes\K$ to $C^*(E_{1}') \otimes\K$. This standard form will ensure that the matrices $\Asf_{E_1}$ and $\Asf_{E_1'}$ have the same size and block structure, and that they satisfy certain additional technical conditions. This will be done in Lemmas~\ref{lem:stdform} and~\ref{lem:enlargen}. 
 
 \item[Step 4] In Section~\ref{sec:lifting} we show a result similar to \cite[Corollary~4.7]{MR1990568} that shows that an isomorphism $\FKR(\calP, C^*(E_{1}) ) \cong \FKR(\calP, C^*( E_{1}') )$ is induced by a \GLPEe from $\Bsf_{E_1}$ to $\Bsf_{E_1'}$. 
 
\item[Step 5] We then use the results in Section~\ref{sec:cuntzsplice} to find graphs $E_2, E_2'$ in a certain standard form such that we have equivariant isomorphisms from $C^*(E_{1}) \otimes\K$ to $C^*(E_{2}) \otimes\K$ and from $C^*(E_{1}') \otimes\K$ to $C^*(E_{2}') \otimes\K$ that induce isomorphisms on the reduced filtered $K$-theory that are induced by \GLPEes such that the composed \GLPEe from $\Bsf_{E_2}$ to $\Bsf_{E_2'}$ is in fact an \SLPEe. 
 
 \item[Step 6] The result will now follow from Proposition~\ref{prop:toke} and~\cite[Theorem~4.4]{MR1907894}.
 \end{enumerate}

\section{Lifting reduced filtered isomorphism to a \GLPEe}
\label{sec:lifting}

In \cite{MR1990568} there are many useful theorems concerning lifting $K$-web isomorphisms to \GLPEe{s} and \SLPEe{s}. Note that all $n_i$'s are assumed to be nonzero in \cite{MR1990568}. 
From \cite[Theorems~4.4 and~4.5]{MR1990568} we get the useful Corollary~4.7 in \cite{MR1990568}, that allows us to lift $K$-web isomorphisms to \GLPEe{s} provided that the $\gcd$ of each diagonal block is $1$. 
This corollary is easily generalized to also allow for diagonal blocks with $\gcd B\{i\}=\gcd B'\{i\}=0$ (\ie, $B\{i\}$ and $B'\{i\}$ are the zero matrices); this follows easily, since every automorphism of the cokernel of a (square) zero matrix is \GL-allowable (\cf\ \cite{MR1990568}). 

The reduced filtered $K$-theory includes more information in general than the $K$-webs (every $K_1$-group of every gauge simple subquotient, \cf\ the discussion in \citefirst[Section~\ref{I-sec:red-filtered-K-theory-K-web-GLP-and-SLP-equivalences}]). Therefore, we will need to strengthen the result \cite[Corollary~4.7]{MR1990568} even more. 
First, we generalize \cite[Theorem~4.4 (and Proposition~4.1)]{MR1990568} as follows (we here only consider the case $\mathcal{R}=\Z$). 

\begin{theorem}\label{thm:help}
Let $B$ be an $n\times n$ (square) matrix over \Z with $n\in\N$, and let $\delta=\gcd B$. 
Let $\phi$ be an automorphism of $\cok B$, let $\psi$ be an automorphism of $\ker B$, and let $M$ be any $n\times n$ matrix over \Z inducing $\phi$, \ie, $\phi([x])=[Mx]$ for all $x\in\Z^n$. 

Then $\det(M)\equiv\pm 1 \pmod \delta$ if and only if there exist $n\times n$ invertible (\GL) matrices $U$ and $V$ over \Z such that $UBV=B$ and $U$ is inducing $\phi$ and $V^{-1}$ is inducing $\psi$.
\end{theorem}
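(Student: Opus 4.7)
The only-if direction requires no machinery. If such $U, V$ exist, then $U$ and $M$ both induce $\phi$, so every column of $M - U$ lies in $B\Z^n$; hence $M = U + BN$ for some integer matrix $N$. Every entry of $BN$ is a multiple of $\delta = \gcd B$, so expanding $\det(U + BN)$ as a sum over permutations and noting that any summand with at least one factor from $BN$ is divisible by $\delta$ yields $\det M \equiv \det U \equiv \pm 1 \pmod{\delta}$.

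For the if direction, my plan is to reduce to \cite[Theorem~4.4]{MR1990568} (which handles only the cokernel datum $\phi$) and then to modify the output so as to realize $\psi$ on $\ker B$ as well. Applying \cite[Theorem~4.4]{MR1990568} to the pair $(\phi, M)$ produces $U_0, V_0 \in \GL(n, \Z)$ with $U_0 B V_0 = B$ and $U_0$ inducing $\phi$. From $U_0 B V_0 = B$ I read off $B V_0 = U_0^{-1} B$ and $B V_0^{-1} = U_0 B$, which together imply that both $V_0$ and $V_0^{-1}$ send $\ker B$ into itself; thus $\psi_0 := V_0^{-1}|_{\ker B}$ is an honest automorphism of $\ker B$.

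I then set $V := V_0 W$, where $W \in \GL(n, \Z)$ is chosen to satisfy (i) $BW = B$, equivalently every column of $W - I$ lies in $\ker B$, and (ii) $W|_{\ker B} = \psi_0 \psi^{-1}$. Because the kernel of an integer matrix is a direct summand of $\Z^n$ (immediate from Smith normal form of $B$), I can fix a splitting $\Z^n = \ker B \oplus N$ and define $W$ by $\alpha := \psi_0 \psi^{-1}$ on $\ker B$ and the identity on $N$; in block form with respect to this splitting, $W = \begin{smallpmatrix} \alpha & 0 \\ 0 & I \end{smallpmatrix}$, which has determinant $\pm 1$ and hence lies in $\GL(n, \Z)$. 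Both conditions (i) and (ii) are clear from the construction. A short verification then gives $UBV = U_0 B V_0 W = BW = B$, $U = U_0$ still induces $\phi$, and $V^{-1}|_{\ker B} = W^{-1}|_{\ker B} \circ V_0^{-1}|_{\ker B} = \alpha^{-1} \psi_0 = \psi$, as required.

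The only input beyond \cite[Theorem~4.4]{MR1990568} is the elementary observation that an automorphism of a direct summand of $\Z^n$ extends to an element of $\GL(n, \Z)$ by acting as the identity on a complement; there is no real obstacle. Conceptually, the strengthening decouples the cokernel and kernel data by exploiting a right factor that is invisible on $\cok B$, so that the full force of the Boyle--Huang lifting result applies to $\phi$ while $\psi$ is realized a posteriori with one extra degree of freedom.
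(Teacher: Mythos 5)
Your argument is correct and follows essentially the same route as the paper: invoke \cite[Theorem~4.4]{MR1990568} to realize $\phi$, observe that the resulting $V_0^{-1}$ already restricts to some automorphism of $\ker B$, and then post-compose $V_0$ with a unimodular correction $W$ satisfying $BW=B$ that acts as the prescribed automorphism on $\ker B$ and trivially on a complement (the paper makes the splitting $\Z^n=\ker B\oplus N$ concrete by first passing to the Smith normal form of $B$, which is an inessential difference). Your explicit expansion-of-determinant argument for the only-if direction is also fine; the paper simply delegates that direction to the cited theorem.
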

\begin{proof}
The only thing that does not follow from \cite[Theorem~4.4]{MR1990568} is that we can choose the \GL-equivalence $(U,V)$ such that it also induces the right automorphism on $\ker B$. For this, it is clear that we may assume that $B$ is its own Smith normal form (just like in the proof of \cite[Theorem~4.4]{MR1990568}). 
We use \cite[Theorem~4.4]{MR1990568} to get a \GL-equivalence $(U,V')\colon B\rightarrow B$ that induces $\phi$ on $\cok B$. 
The matrix $V'^{-1}$ induces an automorphism $\psi'$ of $\ker B$. 
Now we will find a \GL-equivalence $(I,V'')\colon B\rightarrow B$ that induces $\psi\circ\psi'^{-1}$ on $\ker B$ --- then $(U,V'V'')$ is a \GL-equivalence that induces $\phi$ on $\cok B$ and $\psi$ on $\ker B$. Now, the automorphism $\psi\circ\psi'^{-1}$ on $\ker B$ uniquely determines what $V''^{-1}$ should be on the lower right block matrix (where we write the matrices as $2\times 2$ block matrices according to the nonzero respectively zero part of the diagonal of $B$). Let $V''^{-1}$ be the block diagonal matrix that has this matrix as lower right block matrix and the identity as the upper left block matrix.
\end{proof}

Now, we let
$$\calP_{\min}=\setof{i\in\calP}{ j\preceq i\Rightarrow i=j}.$$
Using the above result, we get the following stronger version of \cite[Theorem~4.5]{MR1990568}:

\begin{theorem}[{Strengthening of \cite[Theorem~4.5]{MR1990568}}] \label{thm:BH-4.5-B}
Let $\mathbf{n}=(n_i)_{i\in\calP}$ be a multiindex with $n_i\neq 0$, for all $i\in\calP$. 
Suppose $B$ and $B'$ are matrices in \MPZ[\mathbf{n}] with corresponding diagonal
blocks equal, and $\kappa\colon K(B)\rightarrow K(B')$ is a $K$-web isomorphism. 
Suppose that for each $i\in\calP_{\min}$, we have an automorphism $\psi_i\colon\ker B\{i\}\rightarrow\ker B'\{i\}$. 
Then there exist matrices $U,V\in\GLZ[\mathbf{n}]$ such that we have a \GLPEe $(U, V )\colon B\rightarrow B'$ satisfying $\kappa_{(U,V )} = \kappa$ if and only if each of the
automorphisms $d_i\colon \cok B\{i\}\rightarrow \cok B'\{i\}$ defined by $\kappa$ are \GL-allowable --- 
moreover, the \GLPEe can always be chosen such that $V^{-1}\{i\}$ induces $\psi_i$ for each $i\in\calP_{\min}$.
\end{theorem}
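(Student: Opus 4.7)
The ``only if'' direction is immediate from \cite[Theorem~4.5]{MR1990568}, so we focus on the ``if'' direction and the added conclusion about $\calP_{\min}$. The plan is to first invoke \cite[Theorem~4.5]{MR1990568} to obtain some \GLPEe $(U, V') \colon B \rightarrow B'$ realizing $\kappa$, and then to correct the induced action of $V'^{-1}\{i\}$ on $\ker B\{i\}$ for each $i \in \calP_{\min}$ one block at a time, leaving $U$ untouched and without disturbing the induced $K$-web isomorphism.

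Since the diagonal blocks of $B$ and $B'$ agree, $\ker B\{i\} = \ker B'\{i\}$, and for each $i \in \calP_{\min}$ the matrix $V'^{-1}\{i\}$ induces some automorphism $\psi'_i$ of $\ker B\{i\}$. The error to correct is $\tau_i := \psi_i \circ (\psi'_i)^{-1}$. Applying Theorem~\ref{thm:help} to the square diagonal block $B'\{i\}$, with the identity automorphism of $\cok B'\{i\}$ (realised by $M = I$, hence trivially \GL-allowable) and with $\tau_i$ on $\ker B'\{i\}$, we obtain a matrix $W_i \in \GLZ[n_i]$ satisfying $B'\{i\} W_i = B'\{i\}$, such that $W_i$ acts as the identity on $\cok B'\{i\}$ and $W_i^{-1}$ realises $\tau_i$ on $\ker B'\{i\}$ (we take the left factor supplied by Theorem~\ref{thm:help} to be $I$).

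We then assemble the $W_i$'s into a matrix $V'' \in \GLPZ$ which is block-diagonal with $W_i$ on the $i$-th diagonal block for $i \in \calP_{\min}$ and the identity block elsewhere. The key structural observation, which is the main point to verify carefully, is that for $i \in \calP_{\min}$ the block column of $B'$ indexed by $i$ has no nonzero off-diagonal entries: by the conventions of \citefirst[Definition~\ref{I-def:blockmatrices}], a nonzero block $B'\{j, i\}$ forces $j \preceq i$, and minimality of $i$ then gives $j = i$. Hence $B'V'' = B'$, so $UB(V'V'') = (UBV')V'' = B'V'' = B'$ and $(U, V'V'')$ is a \GLPEe from $B$ to $B'$. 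Since each block of $V''$ is the identity on the relevant diagonal cokernel, the induced map on the entire $K$-web still agrees with $\kappa_{(U, V')} = \kappa$, while on $\ker B\{i\}$ with $i \in \calP_{\min}$ the restriction $(V'V'')^{-1}\{i\}|_{\ker B\{i\}} = W_i^{-1} \circ \psi'_i$ now equals $\tau_i \circ \psi'_i = \psi_i$, as required. The one nontrivial point is the structural fact about minimal block columns; once that is verified, the rest is routine assembly from Theorem~\ref{thm:help} and \cite[Theorem~4.5]{MR1990568}.
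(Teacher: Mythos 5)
Your proposal is correct and follows essentially the same route as the paper: obtain $(U,V')$ from \cite[Theorem~4.5]{MR1990568}, then post-compose with a block-diagonal correction $V''$ whose minimal diagonal blocks come from (the proof of) Theorem~\ref{thm:help} with trivial left factor, so that $(I,V'')$ is a \GLPEe from $B'$ to itself fixing $\kappa$ while adjusting the kernel automorphisms. Your explicit verification that the block column of a minimal index vanishes off the diagonal is exactly the ``straightforward to verify'' step the paper leaves implicit; just note that taking the left factor to be $I$ is guaranteed by the construction in the \emph{proof} of Theorem~\ref{thm:help} rather than by its statement, which is also how the paper invokes it.
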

\begin{proof}
The only thing that does not follow from \cite[Theorem~4.5]{MR1990568}, is that we can choose the \GLPEe $(U,V)$ such that it also induces the right automorphisms on $\ker B\{i\}$, $i\in\calP_{\min}$. 
We choose a \GLPEe $(U,V')$ according to \cite[Theorem~4.5]{MR1990568}, so that it induces the given $K$-web isomorphism. 
For each $i\in\calP_{\min}$, this gives an automorphism $\psi_i'$ of $\ker B\{i\}$. 
Now choose \GL-equivalences $(I,V_i'')$ of $B\{i\}$ according to the proof of Theorem~\ref{thm:help} so that $V_i''^{-1}$ induces $\psi_i\circ\psi_i'^{-1}$ for each $i\in\calP_{\min}$.
Let $V''$ be the block matrix that is the identity matrix everywhere except that 
$V''\{i\}=V_i''$ for every $i\in\calP_{\min}$. 
It is straightforward to verify that $(I,V'')$ is a \GLPEe from $B'$ to $B'$, and that 
$(U,V'V'')$ induces $\kappa$ and the desired automorphisms on the minimal components.
\end{proof}

Together with Theorem~\ref{thm:help} (using \cite[Proposition 4.1(1)]{MR1990568} and the discussion right after), 
this gives us the following stronger version of \cite[Corollary~4.7]{MR1990568}. 

\begin{corollary}[{Strengthening of \cite[Corollary~4.7]{MR1990568}}] \label{cor:BH-4.6-B}
Let $\mathbf{n}=(n_i)_{i\in\calP}$ be a multiindex with $n_i\neq 0$, for all $i\in\calP$. 
Suppose $B$ and $B'$ are matrices in $\MPZ[\mathbf{n}]$ with $\gcd B\{i\}=\gcd B'\{i\}\in\{0,1\}$ for all $i\in\calP$.
Then for any $K$-web isomorphism $\kappa\colon K(B)\rightarrow K(B')$ and any family of isomorphisms $\psi_i\colon\ker B\{i\}\rightarrow\ker B'\{i\}$, $i\in\calP_{\min}$, 
there exist matrices $U,V\in\GLZ[\mathbf{n}]$ such that we have a \GLPEe $(U, V )\colon B\rightarrow B'$ satisfying $\kappa_{(U,V )} = \kappa$ and $V^{-1}\{i\}$ induces $\psi_i$ for each $i\in\calP_{\min}$.
\end{corollary}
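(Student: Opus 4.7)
The plan is to reduce to Theorem~\ref{thm:BH-4.5-B} by first pre- and post-multiplying $B'$ by a block-diagonal \GLPEe{} to arrange that its diagonal blocks coincide with those of $B$.  Concretely, for each $i\in\calP$ I would produce $W_i,X_i\in\GL(n_i,\Z)$ with $W_iB'\{i\}X_i=B\{i\}$: if $\gcd B\{i\}=0$ both blocks vanish and $W_i=X_i=I$ suffices, and if $\gcd B\{i\}=1$ then the $K$-web isomorphism $\kappa$ restricts to an isomorphism $\cok B\{i\}\cong\cok B'\{i\}$ which, together with equality of the sizes $n_i$, forces $B\{i\}$ and $B'\{i\}$ to share a Smith normal form (the cokernel pins down all invariant factors, including the number of zeros on the diagonal).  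Hence \cite[Proposition~4.1(1)]{MR1990568} supplies suitable $W_i,X_i$.  Assembling these block-diagonally gives $W,X\in\GLPZ[\mathbf{n}]$ and a \GLPEe{} $(W,X)\colon B'\to B''$, where $B'':=WB'X$ satisfies $B''\{i\}=B\{i\}$ for every $i\in\calP$.

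I would then apply Theorem~\ref{thm:BH-4.5-B} to the pair $B,B''$, equipped with the $K$-web isomorphism $\kappa'':=\kappa_{(W,X)}\circ\kappa\colon K(B)\to K(B'')$ and, for each $i\in\calP_{\min}$, with the kernel automorphism $\tilde\psi_i:=X_i^{-1}\circ\psi_i$ of $\ker B\{i\}=\ker B''\{i\}$; here I use that $(W,X)$ induces the diagonal-block isomorphism $X_i\colon\ker B''\{i\}\to\ker B'\{i\}$.  The key observation is that the \GL-allowability hypothesis of Theorem~\ref{thm:BH-4.5-B} is automatic in our setting, because by Theorem~\ref{thm:help} it reduces to the condition $\det\equiv\pm1\pmod{\gcd B\{i\}}$, which imposes no restriction when $\gcd B\{i\}\in\{0,1\}$.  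Theorem~\ref{thm:BH-4.5-B} then delivers a \GLPEe{} $(U',V')\colon B\to B''$ with $\kappa_{(U',V')}=\kappa''$ and $V'\{i\}^{-1}$ inducing $\tilde\psi_i$ for each $i\in\calP_{\min}$.

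Finally I would set $U:=W^{-1}U'$ and $V:=V'X^{-1}$; both lie in $\GLPZ[\mathbf{n}]$ since this group is closed under products and inverses, and a direct computation yields $UBV=W^{-1}U'BV'X^{-1}=W^{-1}B''X^{-1}=B'$, $\kappa_{(U,V)}=\kappa_{(W,X)}^{-1}\circ\kappa_{(U',V')}=\kappa$, and $V\{i\}^{-1}=X_i\circ V'\{i\}^{-1}$ inducing $X_i\circ\tilde\psi_i=\psi_i$ on $\ker B\{i\}$ for each $i\in\calP_{\min}$, as required.  The only genuinely delicate step is the kernel bookkeeping --- identifying the correct twist $\tilde\psi_i=X_i^{-1}\circ\psi_i$ to hand to Theorem~\ref{thm:BH-4.5-B} so that after re-composing with $(W,X)^{-1}$ one recovers precisely the prescribed $\psi_i$.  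The rest is routine composition of \GLPEe{s}, combined with the observation that \GL-allowability carries no content when $\gcd\in\{0,1\}$.
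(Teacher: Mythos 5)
Your proposal is correct and follows essentially the same route the paper indicates (the paper gives no written-out proof, only the remark that the corollary follows from Theorem~\ref{thm:BH-4.5-B} together with Theorem~\ref{thm:help} and \cite[Proposition~4.1(1)]{MR1990568}): equalize the diagonal blocks via a block-diagonal \GLPEe, apply the strengthened Theorem~4.5, and observe that \GL-allowability is automatic when $\gcd B\{i\}\in\{0,1\}$. Your kernel bookkeeping with $\tilde\psi_i=X_i^{-1}\circ\psi_i$ is exactly the detail needed to make the reduction work, so nothing is missing.
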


This allows us to lift reduced filtered $K$-theory isomorphisms to \GLPEe{s}, given that the diagonal blocks satisfy the relevant conditions about $\gcd$ (\cf\ \citefirst[Section~\ref{I-sec:red-filtered-K-theory-K-web-GLP-and-SLP-equivalences}]). 

\section{Edge expansion}
\label{sec:edge}

In this section we will look at edge expansion, what stable isomorphism between the resulting graph \cas it induces, and describe the induced map on the reduced filtered $K$-theory in terms of an \SLPEe. 
We need these results to be able to enlarge our matrices while still keeping track of the induced maps on $K$-theory --- the results are used in the proof of Lemma~\ref{lem:enlargen}.

The results in \cite{MR2922394} are important here, and we will recall the notation and results when needed.

\begin{definition}\label{def: edge expansion homomorphism}
Let $E = ( E^0 , E^1 , r_E , s_E)$ be a graph and let $e_0 \in E^1$.  Set $v_0 = s_E ( e_0 )$.  Let $F$ be the \emph{simple expansion graph} at $e_0$ defined by 
\[
F^0 = E^0 \sqcup \{ \tilde{v}_0 \}  \qquad \text{and} \qquad F^1 = \left( E^1 \setminus \{ e_0 \}  \right) \sqcup \{ f_1 , f_2 \}
\]
where $s_F \vert_{ E^1 \setminus \{ e_0 \} } = s_E \vert_{ E^1 \setminus \{ e_0 \} }$, $r_F \vert_{E^1 \setminus \{ e_0 \}} = r_E \vert_{ E^1 \setminus \{ e_0 \} }$, $s_F ( f_1 ) = v_0$, $r_F ( f_1 ) = \tilde{v}_0 = s_F ( f_2 )$, and $r_F ( f_2 )  = r_E( e_0 )$.  
\end{definition}

For a group $G$ and a set $I$, we let $G^I$ denote the direct product over $I$ and let $G^{(I)}$ denote the direct sum over $I$. 

Let $E$ be a graph. 
By \cite[Proposition~3.8]{MR2922394}, there exists an isomorphism
\[
\chi_0^{E} \colon \cok \left((\Bsf_E^{\bullet})^{\mathsf{T}}\right) \rightarrow K_0 ( C^* ( E ) ) 
\]
given by $\chi_0^{E} ( \overline{e}_v ) = [ p_v ]_0$, where $e_v$ is the element in $\Z^{(E^0)}$ that is $1$ at the $v$'th coordinate and zero in all other coordinates and $\overline{e}_v$ is the image of $e_v$ in $\cok \left( (\Bsf_E^\bullet)^\mathsf{T}\right)$.  The preimage of the positive cone of $K_0 ( C^* (E) )$ is generated by 
\[
\{ e_v \} \cup \setof{ e_v - \sum_{ e \in F } e_{r(e)}}{\text{$v \in E^0_\mathrm{sing}$, $F \subseteq s^{-1}_E(v)$, $F$ finite} }.
\]

By \cite[Proposition~3.8]{MR2922394}, there exists an isomorphism
\[
\chi_1^{E} \colon \ker \left((\Bsf_E^\bullet)^\mathsf{T}\right)\rightarrow K_1 ( C^* ( E ) ) 
\]
given by $\chi_1^{E} ( \mathbf{x} ) = [ \mathsf{U}_\mathbf{x} ]_1$.  See the paragraph before \cite[Fact~3.6]{MR2922394} for the definition of $\mathsf{U}_\mathbf{x}$.  We will describe $( \chi_1^{E} )^{-1}$.  To do this, we recall the proof of \cite[Proposition~3.8]{MR2922394}.  

Let $\mathcal{T}(E)$ be the Toeplitz algebra of $E$, \ie, $\mathcal{T}(E)$ is the universal \ca generated by a set of mutually orthogonal projections $\setof{ q_v }{ v \in E^0 }$ and a set $\setof{ t_e }{ e \in E^1 }$ of partial isometries satisfying the relations
\begin{itemize}
\item $t_e^*t_f=0$ if $e,f\in E^1$ and $e\neq f$,
\item $t_e^* t_e = q_{r(e)}$ for all $e \in E^1$, and, 
\item $t_e t_e^* \leq q_{ s(e) }$ for all $e \in E^1$.
\end{itemize}
Whenever we have a set of mutually orthogonal projections $\setof{q_v}{v\in E^0}$ and 
and a set of partial isometries $\setof{ t_e }{ e \in E^1 }$ in a \ca satisfying the above relations, then we call these elements a \emph{Toeplitz-Cuntz-Krieger $E$-family}.
Note that there exists a canonical surjective \starhomo $\pi_E \colon \mathcal{T} (E) \rightarrow C^* (E)$ such that $\pi_E ( q_v ) = p_v$ and $\pi_E ( t_e ) = s_e$ for all $v \in E^0$ and $e \in E^1$, where $\setof{ p_v, s_e }{ v \in E^0, e \in E^1 }$ is a Cuntz-Krieger $E$-family generating $C^* (E)$.  

As explained in the proof of \cite[Proposition~3.8]{MR2922394}, there exist isomorphisms \fctw{\kappa_E}{K_0 ( \ker( \pi_E ) )}{\Z^{(E^0_\mathrm{reg})}} and \fctw{\lambda_E}{K_0 ( \mathcal{T} (E) )}{\Z^{(E^0)}} such that the diagram 
\[
\xymatrix{
0 \ar[r] & K_1 ( C^* (E) ) \ar[r]^-{\partial_1^E} & K_0 ( \ker( \pi_E ) ) \ar[d]_-{\kappa_E} \ar[r]^-{(\iota_E)_*} & K_0 ( \mathcal{T} (E) ) \ar[r]^-{ (\pi_E)_*
 } \ar[d]^-{\lambda_E} & K_0 ( C^* ( E) ) \ar[r] & 0 \\
 & & \Z^{(E^0_\mathrm{reg})} \ar[r]_-{ (\Bsf_E^\bullet)^\mathsf{T} }  & \Z^{(E^0)} & & }
\]
commutes, with the top row being the exact sequence in $K$-theory induced by
\[
\xymatrix{
0 \ar[r] & \ker ( \pi_E ) \ar[r]^-{\iota_E} & \mathcal{T} (E) \ar[r]^-{ \pi_E } & C^* (E) \ar[r] & 0.
}
\]
Moreover, $\lambda_E ( [ q_v ]_0 ) = e_v$ and 
\[
\kappa_E \left( \left[ q_w - \sum_{ s(e) = w } t_e t_e^* \right] \right) = e_w  
\]
for all $v \in E^0$ and $w \in E_\mathrm{reg}^0$.  By the proof of \cite[Proposition~3.8]{MR2922394}, $(\chi_1^{E})^{-1} = \kappa_E \circ \partial_1^E$.

Suppose $E$ is a graph such that $\Bsf_E\in\MPZc$.  For a saturated, hereditary subset $H$ of $E^0$, we let $\mathfrak{J}_H$ denote the ideal in $C^*(E)$ generated by the vertex projections $\setof{ p_v }{ v \in H }$.  For saturated hereditary subsets $H_0\subseteq H\subseteq E^0$, we now show how to identify $K_{0} ( \mathfrak{J}_{H } / \mathfrak{J}_{ H_{0} } )$ and $K_{1} ( \mathfrak{J}_{H } / \mathfrak{J}_{H_{0} } )$ with the cokernel and kernel of a certain submatrix of $(\Bsf_{E}^\bullet)^{\mathsf{T}}$.  If $B$ is a matrix whose rows and columns are indexed by a set $S$ and $S_{0}$ is a subset of $S$, then we let $B\langle S_{0} \rangle$ denote the principal submatrix of $B$ whose $(i,j)$ entry, for $i, j \in S_{0}$, is the $(i,j)$ entry of $B$.  

Let $H$ be a saturated hereditary subset of $E^{0}$.  Let $E_{H}$ be the subgraph of $E$ given by $E_{H}^0 = H$, $E_{H}^{1} = s^{-1}_E (H)$, and the source and range maps of $E_{H}$ are the maps inherited from $E$.  Note that each saturated hereditary subset of $E_{H}^0$ is of the form $H_{0} \subseteq H$, with $H_{0}$ a saturated hereditary subset of $E^{0}$.  Let $H_{0}$ be a saturated hereditary subset of $E^{0}$ with $H_{0} \subseteq H$.  Let $E_{ H \setminus H_{0}}$ be the subgraph of $E$ given by $E_{ H \setminus H_{0}}^{0} = H \setminus H_{0}$ and $E_{ H \setminus H_{0}}^{1} = s^{-1}_E(H) \cap r^{-1}_E ( H \setminus H_{0} )$, and the source and range maps of $E_{H \setminus H_{0}}$ is the restriction of $s_{E}$ and $r_{E}$, respectively.  Note that $E_{ H \setminus H_{0}}$ is also a subgraph of $E_{H}$.  Then there exists an injective \starhomo $\Phi_{ H \setminus H_{0} } \colon C^{*} ( E_{ H \setminus H_{0} } ) \rightarrow \mathfrak{J}_{H} / \mathfrak{J}_{H_{0}}$ such that $\Phi_{ H \setminus H_{0} } ( p_{v, E_{ H \setminus H_{0} } } ) = p_{v, E} + \mathfrak{J}_{H_{0}}$ and $\Phi_{ H \setminus H_{0} } ( s_{e, E_{ H \setminus H_{0} } } ) = s_{e, E} + \mathfrak{J}_{H_{0}}$, and $\Phi_{  H \setminus H_{0} } ( C^{*} ( E_{ H \setminus H_{0} } ) ) = \overline{P}_{ H \setminus H_{0} } \left( \mathfrak{J}_{H} / \mathfrak{J}_{H_{0}} \right) \overline{P}_{ H \setminus H_{0} }$, where $\overline{P}_{ H \setminus H_{0} } = \sum_{ v \in H \setminus H_{0} } ( p_{v,E} + \mathfrak{J}_{H_{0}} )$.  Since $\overline{P}_{ H \setminus H_{0} }  \left( \mathfrak{J}_{H} / \mathfrak{J}_{H_{0}} \right) \overline{P}_{ H \setminus H_{0} }$ is full in $\mathfrak{J}_{H} / \mathfrak{J}_{H_{0}}$, by \cite{MR0454645} (see also \cite[Corollary~3.3]{MR2379290}), $K_{*} ( \Phi_{H \setminus H_{0}} )$ is an isomorphism.  Note that $\Asf_{E_{H \setminus H_{0}}} = (\Asf_{E})\langle H \setminus H_{0} \rangle$ and $\Bsf_{E_{H \setminus H_{0}}}^{\bullet}$ is the matrix obtained from $\Bsf_{E}^\bullet$ by removing all rows and columns whose index does not belong to $H \setminus H_{0}$.  Then 
\[
\chi_{0}^{H \setminus H_{0}} := K_{0} ( \Phi_{H \setminus H_{0}} ) \circ \chi_{0}^{E_{H \setminus H_{0} }} \colon  \cok \left( (\Bsf_{E_{H \setminus H_{0}}}^{\bullet} )^{\mathsf{T}} \right) \rightarrow K_{0} (\mathfrak{J}_{H} / \mathfrak{J}_{H_{0}}) 
\]   
and 
\[
\chi_{1}^{H \setminus H_{0}} := K_{1} ( \Phi_{H \setminus H_{0}} ) \circ \chi_{1}^{E_{H \setminus H_{0}}} \colon  \ker \left( (\Bsf_{E_{H \setminus H_{0}}}^{\bullet} )^{\mathsf{T}} \right) \rightarrow K_{1} (\mathfrak{J}_{H} / \mathfrak{J}_{H_{0}} ). 
\]  
are isomorphisms. See also \cite[Proposition~3.4]{MR1988256}.

\begin{lemma}\label{lem: edge expansion homomorphism}
Let $E = ( E^0 , E^1 , r_E , s_E)$ be a graph and let $e_0 \in E^1$.  Let $F$ be the simple expansion graph at $e_0$.  Then there exists a \starhomo $\Psi \colon \mathcal{T} (E) \rightarrow \mathcal{T} (F)$ and an injective \starhomo $\overline{\Psi} \colon C^* (E) \rightarrow C^* (F)$ such that 
\begin{enumerate}[(1)] 
\item \label{lem: edge expansion homomorphism-1}
$\pi_F \circ \Psi = \overline{\Psi} \circ \pi_E$ (in particular, $\Psi \vert_{ \ker( \pi_E ) }$ is a \starhomo from $\ker( \pi_E )$ to $\ker( \pi_F )$), 
\item \label{lem: edge expansion homomorphism-2}
$\overline{ \Psi } ( C^* ( E ) ) = P C^* (F) P$, where $P = \sum_{ v \in E^0 } p_{v, F }$ is an element of the multiplier algebra of $C^*(F)$, 
\item \label{lem: edge expansion homomorphism-3}
$\Psi ( q_{v,E} ) = q_{v,F}$ for all $v \in E^0$, and, 
\item \label{lem: edge expansion homomorphism-4}
$\Psi ( t_{e,E} ) = t_{e,F}$ for all $e \in E^{1} \setminus \{ e_{0} \}$,
\end{enumerate}
where $\setof{ q_{v,E}, t_{ e, E } }{ v \in E^0, e \in E^1 }$ is a Toeplitz-Cuntz-Krieger $E$-family generating $\mathcal{T}(E)$ and $\setof{ q_{v,F}, t_{ e, F } }{ v \in F^0, e \in F^1 }$ is a Toeplitz-Cuntz-Krieger $F$-family generating $\mathcal{T}(F)$.  
\end{lemma}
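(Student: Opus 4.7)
The plan is to define $\Psi$ directly on the Toeplitz-Cuntz-Krieger generators of $\mathcal{T}(E)$ via the universal property and then show that it descends to an injective \starhomo $\overline{\Psi}\colon C^{*}(E)\to C^{*}(F)$. I would set
\[
\Psi(q_{v,E}) = q_{v,F}, \quad \Psi(t_{e,E}) = t_{e,F} \text{ for } e\neq e_0, \quad \Psi(t_{e_{0},E}) = t_{f_{1},F}t_{f_{2},F},
\]
which handles (3) and (4) by construction. The work is then to verify that these images form a Toeplitz-Cuntz-Krieger $E$-family in $\mathcal{T}(F)$. The nontrivial relations concern $t_{e_0,E}$: using $\tilde v_0 = r_F(f_1) = s_F(f_2)$ and the Toeplitz relations in $F$, one computes
\[
(t_{f_{1},F}t_{f_{2},F})^{*}(t_{f_{1},F}t_{f_{2},F}) = t_{f_{2},F}^{*}q_{\tilde v_{0},F}t_{f_{2},F} = q_{r_{E}(e_{0}),F},
\]
together with $(t_{f_{1},F}t_{f_{2},F})(t_{f_{1},F}t_{f_{2},F})^{*} \leq t_{f_{1},F}t_{f_{1},F}^{*} \leq q_{v_{0},F}$; the remaining Toeplitz relations are immediate. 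The universal property of $\mathcal{T}(E)$ produces $\Psi$.

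Next I would construct $\overline{\Psi}$ by showing that $\pi_{F}\circ\Psi$ satisfies the full Cuntz-Krieger relations for $E$ and therefore factors through $\pi_{E}$, giving property (1). The additional relation to verify at a regular vertex $v\in E^{0}_{\mathrm{reg}}$ is immediate for $v\neq v_{0}$ since $s_{F}^{-1}(v)=s_{E}^{-1}(v)$. At $v_{0}$, I would use that $\tilde v_{0}$ is always regular in $F$ (its only outgoing edge is $f_{2}$), so that $s_{f_{2},F}s_{f_{2},F}^{*} = p_{\tilde v_{0},F}$ in $C^{*}(F)$. This yields the crucial identity
\[
\overline{\Psi}(s_{e_{0},E})\overline{\Psi}(s_{e_{0},E})^{*} = s_{f_{1},F}p_{\tilde v_{0},F}s_{f_{1},F}^{*} = s_{f_{1},F}s_{f_{1},F}^{*},
\]
which, summed with the unchanged terms for $e\in s_{E}^{-1}(v_{0})\setminus\{e_{0}\}$, reproduces the CK relation at $v_{0}$ in $F$.

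For injectivity of $\overline{\Psi}$ I would appeal to the gauge-invariant uniqueness theorem. The standard gauge action is not equivariant (because $\overline{\Psi}(s_{e_{0},E})$ is a product of two generators in $C^{*}(F)$), so I would use instead the weighted \emph{cocycle} action $\alpha_{z}(s_{e,F}) = z^{w_{e}}s_{e,F}$ with $w_{f_{1}}=1$, $w_{f_{2}}=0$, and $w_{e}=1$ otherwise; this respects the Cuntz-Krieger relations and makes $\overline{\Psi}$ equivariant with respect to the standard gauge on $C^{*}(E)$. Combined with the fact that $\overline{\Psi}(p_{v,E}) = p_{v,F}\neq 0$, gauge-invariant uniqueness yields injectivity. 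Finally, for (2), the inclusion $\overline{\Psi}(C^{*}(E))\subseteq PC^{*}(F)P$ is immediate. For the reverse, any spanning element $s_{\mu,F}s_{\nu,F}^{*}$ of $PC^{*}(F)P$ has $s_F(\mu), s_F(\nu)\in E^{0}$; since $\tilde v_{0}$ can only be entered via $f_{1}$ and exited via $f_{2}$, each such $\mu$ either comes from a path in $E$ by substituting $f_{1}f_{2}$ for each $e_{0}$ (directly in the image), or terminates as $\mu' f_{1}$ with $\mu'$ a path in $E$ ending at $v_{0}$. In the latter case the identity $s_{f_{1},F}s_{f_{1},F}^{*} = \overline{\Psi}(s_{e_{0},E}s_{e_{0},E}^{*})$ derived above lets me rewrite $s_{\mu,F}s_{\nu,F}^{*}$ as an image under $\overline{\Psi}$. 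The main obstacle I expect is precisely keeping this last identity available when $v_{0}$ is itself singular, which is why the regularity of $\tilde v_{0}$ (automatic, regardless of $v_{0}$) is essential throughout the argument.
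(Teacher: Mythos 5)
Your proposal is correct and follows essentially the same route as the paper: define the Toeplitz--Cuntz--Krieger $E$-family $Q_v=q_{v,F}$, $T_e=t_{e,F}$ for $e\neq e_0$, $T_{e_0}=t_{f_1,F}t_{f_2,F}$ in $\mathcal{T}(F)$, descend to $C^*(F)$ via the Cuntz--Krieger relation at the (automatically regular) vertex $\tilde v_0$, and identify the corner $PC^*(F)P$ using the identity $s_{f_1,F}s_{f_1,F}^*=s_{f_1f_2,F}s_{f_1f_2,F}^*$. The only genuine difference is the injectivity step, where the paper invokes Szyma\'nski's general Cuntz--Krieger uniqueness theorem while you use gauge-invariant uniqueness with a reweighted gauge action ($w_{f_1}=1$, $w_{f_2}=0$); both are valid, and your variant has the mild advantage of avoiding the spectral condition on cycles without exits.
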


\begin{proof}
Define a Toeplitz-Cuntz-Krieger $E$-family in $\mathcal{T} (F)$ by $Q_v = q_{v,F}$ for all $v \in E^0 \subseteq F^0$ and 
\[
T_e = 
\begin{cases}
t_{e,F} &\text{if }e \neq e_0, \\
t_{f_{1} ,F } t_{ f_{2} , F } &\text{if }e = e_0.
\end{cases}
\]
One checks that $\setof{ Q_v, T_e }{ v \in E^0 , e \in E^1 }$ is a Toeplitz-Cuntz-Krieger $E$-family in $\mathcal{T} (F)$.  Hence, there exists a unique \starhomo $\Psi \colon \mathcal{T} (E) \rightarrow \mathcal{T} (F)$ such that $\Psi ( q_{v,E} ) = Q_v$ and $\Psi ( t_{e, E} ) = T_e$.  

Set $P_v=\pi_F ( Q_v )$ and $S_e=\pi_F ( T_e )$.  Then a computation shows that $\setof{ P_v , S_e }{ v \in E^0 , e \in E^1 }$ is a Cuntz-Krieger $E$-family in $C^* (F)$.  Thus, $\Psi$ induces a unique \starhomo $\overline{\Psi} \colon C^* (E) \to C^* (F)$ such that $\overline{\Psi} ( p_{v,E} ) = P_v$ and $\Psi ( s_{e,E} ) = S_e$.  By \cite[Theorem~1.2]{MR1914564}, $\overline{\Psi}$ is injective.  Since $P C^*(F) P$ is equal to the closed linear span of elements of the form $s_{\mu, F } s_{\nu, F}^*$ with $r_F( \mu ) = r_F( \nu )$ and $s_F ( \mu ), s_F( \nu ) \in E^0$ and since $s_{f_1, F } s_{f_1, F}^* = s_{ f_1 f_2, F} s_{f_1f_2, F}^*$, $\overline{ \Psi } ( C^* (E) ) =  P C^* (F) P$.  Thus, \ref{lem: edge expansion homomorphism-1}, and \ref{lem: edge expansion homomorphism-2} hold, and by construction \ref{lem: edge expansion homomorphism-3} and \ref{lem: edge expansion homomorphism-4} hold.
\end{proof}

\begin{remark}\label{rem: edge expansion homomorphism}
Suppose in Lemma~\ref{lem:  edge expansion homomorphism} that $e_0$ is an edge on a cycle based at $v_0:=s_E(e_0)$ and that $\Bsf_E\in\MPZc$. 
Assume that $j\in\calP$ corresponds to the component containing $v_0$. 
Then for the simple expansion graph $F$ at $e_0$, we have that $\Bsf_F\in\MPZc[(\mathbf{m}+\mathbf{e}_j)\times(\mathbf{n}+\mathbf{e}_j)]$, where $\mathbf{e}_j$ is the multiindex that is $1$ in the $j$th coordinate and zero elsewhere. 
Moreover, there is a one-to-one correspondence between saturated hereditary subsets of $E^0$ and saturated hereditary subsets of $F^0$ given by 
\[
H \mapsto 
\widetilde{H} = 
\begin{cases}
H &\text{if }v_0 \notin H, \\
H \cup \{ \tilde{v}_0 \} &\text{if }v_0 \in H.
\end{cases}
\]

Let $\overline{\Psi}_E \colon C^*( E) \rightarrow C^* (F)$ be the \starhomo given in Lemma~\ref{lem: edge expansion homomorphism} for the graph $E$.  Since $\overline{ \Psi }_E$ maps $\mathfrak{J}_H$ to $\mathfrak{J}_{\widetilde{H}}$ for all hereditary and saturated subsets $H\subseteq E^0$, $\overline{\Psi}_E$ induces a \starhomo, which we again denote by $\overline{ \Psi }_E$, from $\mathfrak{J}_H / \mathfrak{J}_{H_0 }$ to $\mathfrak{J}_{ \widetilde{H} } / \mathfrak{J}_{ \widetilde{H}_0 }$. 

Suppose $H$ is a saturated hereditary subset of $E^0$ containing $v_0$ and $H_0$ is a saturated hereditary subset of $E^0$ not containing $v_0$.  Then $F_{ \widetilde{H} \setminus \widetilde{H}_0}$ is the simple expansion of the graph $E_{H \setminus H_0 }$ at $e_0$.  Moreover, $ \overline{\Psi}_E \circ \Phi_{H \setminus H_0}^E = \Phi_{\widetilde{H} \setminus \widetilde{H}_0 }^F \circ \overline{\Psi}_{E_{H \setminus H_0} }$.    Here $\Phi_{H \setminus H_0 }^E$ is the embedding of $C^*(E_{H \setminus H_0 } )$ into $\mathfrak{J}_H / \mathfrak{J}_{H_0}$, $\Phi_{\widetilde{H}\setminus\widetilde{H}_0}^F$ is the embedding of $C^* ( F_{ \widetilde{H} \setminus \widetilde{H}_0 } )$ into $\mathfrak{J}_{ \widetilde{H} } / \mathfrak{J}_{ \widetilde{H}_0 }$, and $\overline{\Psi}_{E_{H \setminus H_0 } }$ is the \starhomo from $C^*( E_{H \setminus H_0 } )$ to $C^* ( F_{ \widetilde{H} \setminus \widetilde{H}_0} )$ given in Lemma~\ref{lem: edge expansion homomorphism} for the graph $E_{H \setminus H_0 }$.
\end{remark}

We now describe how symbol expansion gives rise to an \SLPEe and a \calP-equivariant stable isomorphism that induces the same map on the filtered $K$-theory as the \SLPEe. This will be used in the proof of Lemma~\ref{lem:enlargen}.

\begin{proposition}\label{prop:edge-expansion}
Let $E$ be a graph with finitely many vertices, and let $v_0$ be a vertex in $E^0$ that supports a cycle $\mu = \mu_1 \cdots \mu_n$. 
Assume that $\Bsf_E\in\MPZc$, and that $v_0$ belongs to the block $j\in\calP$. 
Let $F$ be the graph in Definition~\ref{def: edge expansion homomorphism} with $e_0 = \mu_1$ (\ie, the graph is obtained with a simple expansion of $e_0$).  Then $E$ and $F$ are move equivalent, denoted by $E\Meq F$, as defined in \citefirst[Definition~\ref{I-def:graph-equivalences}].

Moreover, $\Bsf_{F}$ is an element of $\MPZc[(\mathbf{m}+\mathbf{e}_j)\times(\mathbf{n}+\mathbf{e}_j)]$ and there exist $U\in\SLPZ[\mathbf{m}+\mathbf{e}_j]$ and $V\in\SLPZ[\mathbf{n}+\mathbf{e}_j]$ that are the identity matrices everywhere except for the $j$'th diagonal block, such that $(U,V)$ is an \SLPEe from $-\iota_{\mathbf{e}_j}(-\Bsf_{E}^\bullet)$ to  $\Bsf_{F}^\bullet$ where $\iota_{\mathbf{e}_j}$ is the embedding of $\MPZ$ into $\MPZ[(\mathbf{m}+\mathbf{e}_j)\times(\mathbf{n}+\mathbf{e}_j)]$ defined in \citefirst[Definition~\ref{I-def:iotar}], and such that there exists a \calP-equivariant isomorphism $\Phi$ from $C^{*}(E) \otimes \K$ to $C^{*}(F) \otimes \K$ satisfying $\FKR(\calP;\Phi)=\FKRs(U,V)$. 
\end{proposition}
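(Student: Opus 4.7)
The plan is to treat the four claims of the proposition one at a time, saving the matching of $K$-theories for the end.

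\emph{Move equivalence and matrix structure.} Simple edge expansion on an edge lying on a cycle is one of the elementary moves already analyzed in \cite{arXiv:1604.05439v1}, so $E\Meq F$ follows directly. Since $e_0$ is on a cycle based at $v_0$, its endpoint $r_E(e_0)$ lies in block $j$, and the new vertex $\tilde v_0$ --- whose only outgoing edge ends in block $j$ and whose only incoming edge begins in block $j$ --- is assigned to block $j$ as well. A direct inspection of the entries of $\Asf_F$ shows that the block structure of $\Bsf_F$ coincides with that of $\Bsf_E$ except that block $j$ has grown in both dimensions by one, which establishes $\Bsf_F\in\MPZc[(\mathbf{m}+\mathbf{e}_j)\times(\mathbf{n}+\mathbf{e}_j)]$.

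\emph{The \SLPEe $(U,V)$.} I would directly compare the two matrices. The matrix $-\iota_{\mathbf{e}_j}(-\Bsf_E^\bullet)$ inserts a new row and a new column indexed by $\tilde v_0$ into $\Bsf_E^\bullet$ carrying only a $-1$ on the new diagonal entry (this is the effect of $\iota_{\mathbf{e}_j}$ combined with the double negation, cf.\ \citefirst[Definition~\ref{I-def:iotar}]). On the other hand, $\Bsf_F^\bullet$ differs from this in exactly three entries: subtract $1$ from the $(v_0,v_1)$-entry (with $v_1=r_E(e_0)$), set the $(v_0,\tilde v_0)$-entry to $1$, and set the $(\tilde v_0,v_1)$-entry to $1$. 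These three changes can be effected by one row operation and one column operation in the $j$-th block (essentially, add row $\tilde v_0$ to row $v_0$ in $\Bsf_F^\bullet$ to clear the extra entries, after a dual column adjustment), each corresponding to an elementary matrix of determinant $1$ supported in block $j$. This produces $U\in\SLPZ[\mathbf{m}+\mathbf{e}_j]$ and $V\in\SLPZ[\mathbf{n}+\mathbf{e}_j]$ that are identities outside block $j$, as required.

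\emph{The $\calP$-equivariant stable isomorphism $\Phi$.} Using Lemma~\ref{lem: edge expansion homomorphism}, obtain the injective \starhomo $\overline{\Psi}\colon C^*(E)\to C^*(F)$ with image $PC^*(F)P$, where $P=\sum_{v\in E^0} p_{v,F}$ is a projection in the multiplier algebra of $C^*(F)$. The projection $P$ is full: the saturated hereditary closure of $E^0$ in $F^0$ equals $F^0$, since $\tilde v_0$ is reached by $f_1$ from $v_0\in E^0$. Hence, by Brown's stability theorem \cite{MR0454645}, $\overline{\Psi}\otimes\id_\K$ extends to a \stariso $\Phi\colon C^*(E)\otimes\K\to C^*(F)\otimes\K$. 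The bijection $H\leftrightarrow\widetilde H$ of Remark~\ref{rem: edge expansion homomorphism} shows that $\overline{\Psi}$ sends $\mathfrak{J}_H$ into $\mathfrak{J}_{\widetilde H}$, and the identity $\overline{\Psi}_E\circ\Phi_{H\setminus H_0}^E=\Phi_{\widetilde H\setminus \widetilde H_0}^F\circ\overline{\Psi}_{E_{H\setminus H_0}}$ from the same remark yields that $\Phi$ is $\calP$-equivariant.

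\emph{Matching $K$-theories.} The main obstacle is then to verify $\FKR(\calP;\Phi)=\FKRs(U,V)$. By Remark~\ref{rem: edge expansion homomorphism}, the check reduces to the subquotient level, where it amounts to tracking what $\overline{\Psi}$ induces on $\cok\bigl((\Bsf_{E_{H\setminus H_0}}^\bullet)^\mathsf{T}\bigr)$ and on $\ker\bigl((\Bsf_{E_{H\setminus H_0}}^\bullet)^\mathsf{T}\bigr)$ through the identifications $\chi_0$ and $\chi_1$ described before Lemma~\ref{lem: edge expansion homomorphism}. On $K_0$ this is straightforward: $\overline{\Psi}$ sends $[p_{v,E}]_0$ to $[p_{v,F}]_0$ for $v\in E^0$, which combines with the relation $[p_{v_0,F}]_0=[p_{\tilde v_0,F}]_0+\sum_{s_E(e)=v_0,\,e\neq e_0}[p_{r_E(e),F}]_0$ to match the action of $V^\mathsf{T}$ on cokernels. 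On $K_1$ one lifts to the Toeplitz level and uses Lemma~\ref{lem: edge expansion homomorphism}\ref{lem: edge expansion homomorphism-1} together with the presentation $(\chi_1^E)^{-1}=\kappa_E\circ\partial_1^E$ to identify the induced map on kernels with $(U^\mathsf{T})^{-1}$, after tracing $\Psi$ through $\kappa_E,\lambda_E,\kappa_F,\lambda_F$. The bookkeeping --- in particular the careful choice of the elementary matrices in the construction of $(U,V)$ so that they exactly match the image of the diagram chase --- is the most delicate part of the argument, but once the single-block case is handled, functoriality of the whole construction propagates the identity $\FKR(\calP;\Phi)=\FKRs(U,V)$ across all subquotients.
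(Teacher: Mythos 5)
Your proposal follows essentially the same route as the paper's proof: the same pair of elementary matrices (one row operation subtracting row $\tilde v_0$ from row $v_0$, one column operation involving column $\tilde v_0$ and column $r_E(e_0)$, both supported in block $j$), the \starhomo $\overline{\Psi}$ of Lemma~\ref{lem: edge expansion homomorphism} combined with Brown's theorem for the stable isomorphism, the $K_0$-check via vertex projections, and the $K_1$-check via the Toeplitz extension and the identification $(\chi_1^E)^{-1}=\kappa_E\circ\partial_1^E$. The only difference is that you defer the explicit $K_1$ diagram chase (which the paper carries out in full, computing $\kappa_F\circ\partial_1^F\circ K_1(\overline{\Psi})$ on the classes $[\mathsf{U}_{\mathbf{x}}]_1$ to land on $(U^\mathsf{T})^{-1}$), correctly identifying it as the delicate step.
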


\begin{proof}
We prove the case where $v_{0}$ is a regular vertex.  The case $v_{0}$ is an infinite emitter is similar.  We index the columns and rows of $-\iota_{\mathbf{e}_j}(-\Bsf_{E}^\bullet)$ with $F^{0}$.  The $j$'th component is indexed by $S_{0} = (H \setminus H_{0}) \sqcup \{ \widetilde{v}_{0} \}$ for the saturated hereditary subsets $H=\overline{H(v_0)}$ and $H_{0}$ of $E^{0}$ where $H_{0}$ is the largest proper saturated hereditary subset of $H$ (\cf\ \citefirst[
Lemma~\ref{I-lem:structure-2}\ref{I-lem:structure-2-3}]).  Let $U$ be the identity matrix except for the $j$'th diagonal block and the $U\{j\}$ block is the basic elementary matrix that acts on left by subtracting row $\tilde{v}_{0}$ from row $v_{0}$.  Let $V$ be the identity matrix except for the $j$'th diagonal block and the $V\{j\}$ block is the basic elementary matrix that acts on the right by subtracting column $\tilde{v}_{0}$ from column $r( \mu_{1} )$.  A computation shows that $(U, V)$ is an \SLPEe from $-\iota_{\mathbf{e}_j}(-\Bsf_{E}^\bullet)$ to  $\Bsf_{F}^\bullet$.

Clearly, $V^{\mathsf{T}} (e_{v} ) = e_{v}$ for all $v\in E^{0}$.  Moreover, for every $v\in F_\mathrm{reg}^0$ 
$$(U^{\mathsf{T}})^{-1} \left( e_{v} \right) 
= \begin{cases}
e_{v}, &\text{if }v\neq v_0, \\
e_{v_0}+e_{\widetilde{v}_0}, &\text{if }v=v_0.
\end{cases}$$

Let $\overline{\Psi} \colon C^* (E) \rightarrow C^* (F)$ be the \starhomo given in Lemma~\ref{lem: edge expansion homomorphism}.  By Remark~\ref{rem: edge expansion homomorphism}, we have that $\overline{\Psi}$ is a $\calP$-equivariant homomorphism.  We now show that $\overline{\Psi}$ induces $\FKR ( U, V)$.  Since $\overline{ \Psi } ( p_{v,E} ) = p_{v,F}$ and $V^{\mathsf{T}} (e_{v} ) = e_{v}$ for all $v\in E^{0}$, $\overline{\Psi}$ induces the desired maps on $K_0$.  Let $S \subseteq F^{0}$ represent the $i$'th component with $i \neq j$.  Since $v_{0} \notin S$, we have that $e_{0} \notin s^{-1} (S) \cap r^{-1} (S)$.  Thus, $\overline{\Psi} ( p_{v,E} ) = p_{v,F}$ and $\overline{\Psi} ( s_{e,E} ) = s_{ e, F }$ for all $v \in S$ and $e \in s^{-1} ( S ) \cap r^{-1}( S )$.  By Remark~\ref{rem: edge expansion homomorphism}, we have that $\overline{\Psi}$ induces the identity on the $K_1$-group of the $i$'th component.

We now show that $\overline{\Psi}$ induces $\FKR ( U, V)$ on the $j$'th component.  By Remark~\ref{rem: edge expansion homomorphism}, it is enough to show this for $E$ with $U = U\{j\}$ and $V = V \{ j \}$.  Let $\Psi \colon \mathcal{T} (E) \rightarrow \mathcal{T} (F)$ be the \starhomo given in Lemma~\ref{lem: edge expansion homomorphism}.  Let $\mathbf{x} = \sum_{ v \in E^{0}_{\mathrm{reg} } } x_{v} e_{v} \in \ker \left( ( \Bsf_{E}^{\bullet} )^{\mathsf{T}} \right)$.  By the proof of \cite[Proposition~3.8]{MR2922394},
\[
\partial_{1}^{E} ([ \mathsf{U}_{\mathbf{x}} ]_{1}) = \sum_{ x_{v} \neq 0} x_v\left[ q_{v,E} - \sum_{ s_{E}(e) = v } t_{e,E} t_{e,E}^{*} \right]_{0}.
\]
Therefore 
\begin{align*}
 (\partial_{1}^{F} \circ K_{1} ( \overline{\Psi} ) ) ( [ \mathsf{U}_{\mathbf{x}} ]_{1} ) 
 &= (K_{0} ( \Psi ) \circ \partial_{1}^{E})([ \mathsf{U}_{\mathbf{x}} ]_{1}) \\
 &= \sum_{ x_{v} \neq 0} x_{v} \left[ \Psi (q_{v,E}) - \sum_{ s_{E}(e) = v } \Psi ( t_{e,E} ) \Psi( t_{e,E} )^{*} \right]_{0}  \\
&= \sum_{ \substack{x_{v} \neq 0 \\ v \neq v_{0}  }} x_v \left[ q_{v,F} - \sum_{ s_{E}(e) = v} t_{e,F} t_{e,F}^{*} \right]_{0} \\
&\quad + x_{v_{0}} \left[ q_{v_{0} , F } - t_{f_{1} , F} t_{f_{2} , F } t_{f_{2},F}^{*} t_{f_{1},F}^{*} - \sum_{ \substack{ s_{E} (e) = v_{0} \\ e \neq e_{0} }  } t_{e, F } t_{e,F}^{*}  \right]_{0}.
\end{align*}
Note that in $K_{0} ( \ker( \pi_{F} ) )$, 
\begin{align*}
&\left[ q_{v_{0} , F } - t_{f_{1} , F} t_{f_{2} , F } t_{f_{2},F}^{*} t_{f_{1},F}^{*} - \sum_{ \substack{ s_{E} (e) = v_{0} \\ e \neq e_{0} }  } t_{e, F } t_{e,F}^{*}  \right] \\
&=  \left[ q_{v_{0} , F } - t_{f_{1} , F} t_{f_{1} , F}^{*} - \sum_{ \substack{ s_{E} (e) = v_{0} \\ e \neq e_{0} }  } t_{e, F } t_{e,F}^{*}  \right]_{0}  + \left[  t_{f_{1} , F} t_{f_{1} , F}^{*}  -  t_{f_{1} , F} t_{f_{2} , F } t_{f_{2},F}^{*} t_{f_{1},F}^{*}  \right]_{0} \\
&= \left[ q_{v_{0} , F } - \sum_{ s_{F} (e) = v_{0}   } t_{e, F } t_{e,F}^{*}  \right]_{0}  + \left[  t_{f_{1} , F} ( q_{ \tilde{v}_{0} , F } -   t_{f_{2} , F } t_{f_{2},F}^{*} ) t_{f_{1},F}^{*}  \right]_{0} \\
&=  \left[ q_{v_{0} , F } - \sum_{ s_{F} (e) = v_{0}   } t_{e, F } t_{e,F}^{*}  \right]_{0}  + \left[  q_{ \tilde{v}_{0} , F } -   t_{f_{2} , F } t_{f_{2},F}^{*}   \right]_{0}.
\end{align*}
Hence,
\[
\kappa_{F} \circ (\partial_{1}^{F} \circ K_{1} ( \overline{\Psi} ) ) ( [ \mathsf{U}_{\mathbf{x}} ]_{1} ) = \sum_{ \substack{x_{v} \neq 0 \\ v \neq v_{0}  }} x_v e_{v} + x_{v_{0}} e_{v_{0}} + x_{v_{0}} e_{\tilde{v}_{0}} = \sum_{x_{v} \neq 0 } x_ve_{v} + x_{ v_{0} } e_{\tilde{v}_{0}}. 
\]
Thus, $K_{1} ( \overline{\Psi} )$ is induced by $(U^{\mathsf{T}})^{-1}$.  We have just shown that $\FKR(\calP; \overline{\Psi})=\FKR(U,V)$.

By Lemma~\ref{lem: edge expansion homomorphism}, $\overline{\Psi} ( C^* ( E ) ) = P C^* (F) P$, where $P = \sum_{ v \in E^0 } p_{v,F}$.  Note that $P C^* (F) P$ is full in $C^* (F)$, and by \cite{MR0454645}, there exists a partial isometry $W$ in $\mathcal{M} ( C^* (F) \otimes \K )$ such that $W^* W = P \otimes 1_{ \mathbb{B} ( \ell^2 ) }$ and $WW^* = 1_{ \mathcal{M} ( C^* (F) ) } \otimes 1_{ \mathbb{B} ( \ell^2 ) }$, where $\mathcal{M}( C^*(F) \otimes \K )$ and $\mathcal{M} ( C^*(F))$ are the multiplier algebras of $C^*(F) \otimes \K$ and $C^*(F)$, respectively.  Since $\overline{\Psi}$ is a \calP-equivariant homomorphism, it follows from Remark~\ref{rem: edge expansion homomorphism} that $\Phi = \mathrm{Ad} ( W ) \circ (\overline{\Psi} \otimes \id_{\K}) \colon C^*(E) \otimes \K \rightarrow C^* (F) \otimes \K$ is a \calP-equivariant isomorphism.  By \cite{MR2379290}, $\FKR(\calP;\Phi) =  \FKR ( \calP ; \overline{\Psi} \otimes \id_{\K} )$ and hence $\FKR(\calP;\Phi)=\FKRs(U,V)$.
\end{proof}

\section{Stable isomorphisms induced by flow equivalences and their $K$-theory}\label{sec:toke}

Following the work of Boyle in \cite{MR1907894}, we will break \SLPEe{s} into \SLPEe{s} induced by basic elementary matrices. 
It is well-known that such \SLPEe{s} induced by basic elementary matrices induce stable isomorphisms. 
They also induce isomorphisms between the $K$-webs in a canonical way (and between the reduced filtered $K$-theories of the corresponding graph \cas). 

In this section we will show that we can choose the stable isomorphisms above in such a way that the above operations commute. 

\begin{proposition}\label{prop:toke}
Let $E$ and $F$ be graphs such that $\Bsf_E$ and $\Bsf_F$ are elements of $\MPZccc$. 
Let $u,v\in E^0$ with $u\neq v$ and $\Asf_{E}(u,v)>0$ and let $W\in\SLPZ[\mathbf{n}]$ denote the positive basic elementary matrix with $W(u,v)=1$ and equal to the identity everywhere else. 

Assume that $W\Bsf_E=\Bsf_{F}$ and the vertex $v$ in $E$ is regular. 
Then $(W^\bullet,I)$ is an \SLPEe from $\Bsf_E^\bullet$ to $\Bsf_{F}^\bullet$, where $W^\bullet$ is the matrix we get from $W$ by removing all columns and rows corresponding to singular vertices of $E$. 
Moreover, there exists a $\calP$-equivariant isomorphism $\Phi$ from $C^*(E)$ to $C^*(F)$ such that $\FKR(\calP;\Phi)=\FKR(W^\bullet,I)$.

Now assume instead that $\Bsf_E W=\Bsf_{F}$ and the vertex $u$ emits at least two edges. 
Then $(I,W)$ is an \SLPEe from $\Bsf_E^\bullet$ to $\Bsf_{F}^\bullet$. Moreover, there exists a $\calP$-equivariant isomorphism $\Phi$ from $C^*(E)\otimes\K$ to $C^*(F)\otimes\K$ such that $\FKR(\calP;\Phi)=\FKRs(I,W)$. 
\end{proposition}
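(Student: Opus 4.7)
The plan is to handle the row case and the column case in parallel. First, at the matrix level, the condition $W \in \SLPZ[\mathbf{n}]$ is immediate: $W$ is a basic elementary matrix of determinant one, and its single nonzero off-diagonal entry is forced to respect the block structure of $\calP$ by the hypothesis that $W\Bsf_E = \Bsf_F$ (respectively $\Bsf_E W = \Bsf_F$) lies in $\MPZccc$. The identity $W^\bullet \Bsf_E^\bullet = \Bsf_F^\bullet$ (respectively $\Bsf_E^\bullet W = \Bsf_F^\bullet$) then follows by observing that the row-addition (respectively column-addition) implemented by $W$ commutes with the removal of the rows indexed by singular vertices of $E$, provided $v$ is regular (respectively $u$ emits at least two edges, so that $u$ remains regular after the column operation). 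These facts give the two claimed \SLPEe{s}.

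For the construction of $\Phi$, I invoke the existing row-addition and column-addition moves of \cite[Propositions~\ref{CS-prop:rowAdd} and \ref{CS-prop:columnAdd}]{arXiv:1602.03709v2}, which produce the required $\calP$-equivariant (stable) isomorphisms. In the row case, fixing an edge $e_0 \colon u \to v$ in $E$ (which exists since $\Asf_E(u,v)>0$), the move yields an isomorphism $\Phi \colon C^*(E) \to C^*(F)$ with $\Phi(p_w^E) = p_w^F$ for every $w \in E^0$ and $\Phi(s_e^E) = s_e^F$ for every $e \in E^1 \setminus \{e_0\}$, while $\Phi(s_{e_0}^E) = \sum_{f \in s_E^{-1}(v)} s_{\tilde f}^F (s_f^F)^*$, where $\tilde f$ denotes the new edge in $F$ corresponding to the composition $e_0 f$. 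Since vertex projections are preserved, $\chi_0^E$ and $\chi_0^F$ identify $K_0(\Phi)$ with the identity on the common cokernel, matching $V^\mathsf{T} = I$. In the column case, the analogous stable isomorphism $\Phi \colon C^*(E) \otimes \K \to C^*(F) \otimes \K$ uses the second edge out of $u$ to implement a Murray--von Neumann manipulation that fixes $[p_w^E]_0$ for $w \neq u$ and sends $[p_u^E]_0$ to $[p_u^F]_0 + [p_v^F]_0$, which is precisely the action of $V^\mathsf{T} = W^\mathsf{T}$ on $\Z^{(F^0)}$.

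It remains to identify the induced $K_1$ maps. For the column case this is direct: the chosen $\Phi$ acts trivially on the edge part of the generating family (the only nontrivial change occurs at the level of vertex projections after stabilization), so it induces the identity on $\ker((\Bsf_E^\bullet)^\mathsf{T})$, agreeing with $(U^\mathsf{T})^{-1} = I$. For the row case, I follow the Toeplitz-algebra method of Proposition~\ref{prop:edge-expansion}: lift $\Phi$ to a \starhomo $\Psi \colon \mathcal{T}(E) \to \mathcal{T}(F)$ determined by $\Psi(q_{w,E}) = q_{w,F}$, $\Psi(t_{e,E}) = t_{e,F}$ for $e \neq e_0$, and $\Psi(t_{e_0,E}) = \sum_{f \in s_E^{-1}(v)} t_{\tilde f, F} t_{f,F}^*$, and compute $(K_0(\Psi) \circ \partial_1^E)([\mathsf{U}_\mathbf{x}]_1)$ for $\mathbf{x} = \sum_w x_w e_w \in \ker((\Bsf_E^\bullet)^\mathsf{T})$. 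The formula $\partial_1^E([\mathsf{U}_\mathbf{x}]_1) = \sum_w x_w [q_{w,E} - \sum_{s_E(e)=w} t_{e,E} t_{e,E}^*]_0$, combined with $\Psi(t_{e_0,E}) \Psi(t_{e_0,E})^* = \sum_f t_{\tilde f, F} t_{\tilde f, F}^*$ and the Toeplitz vertex identity at $u$ in $F$, produces (after passing through $\kappa_F$) an extra $-x_u e_v$ relative to the identity answer. The resulting endomorphism of $\Z^{(E^0_\mathrm{reg})}$ is therefore $e_u \mapsto e_u - e_v$ with all other basis vectors fixed, which is exactly $(W^{\bullet\mathsf{T}})^{-1}$. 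The main technical obstacle is this final bookkeeping step, together with verifying that the specified $\Psi$ genuinely satisfies the Toeplitz--Cuntz--Krieger relations; both are routine but must be done carefully, and are fully parallel to the computation carried out in the proof of Proposition~\ref{prop:edge-expansion}.
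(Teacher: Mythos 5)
Your overall architecture matches the paper's: verify the matrix identities directly, realize the row and column additions by explicit (stable) isomorphisms of graph algebras, identify $K_0$ via the vertex projections, and compute $K_1$ by lifting to Toeplitz algebras and chasing the index map. The matrix-level claims and the $K_0$ identifications are fine. The genuine gap is in your $K_1$ computation for the row case. You propose to lift $\Phi\colon C^*(E)\to C^*(F)$ to a \starhomo $\Psi\colon\mathcal{T}(E)\to\mathcal{T}(F)$ with $\Psi(t_{e_0,E})=\sum_{f\in s_E^{-1}(v)}t_{\tilde f,F}\,t_{f,F}^*$. This assignment does not satisfy the Toeplitz--Cuntz--Krieger relations: one computes
\[
\Bigl(\sum_{f}t_{\tilde f,F}t_{f,F}^*\Bigr)^{*}\Bigl(\sum_{f}t_{\tilde f,F}t_{f,F}^*\Bigr)=\sum_{f\in s_F^{-1}(v)}t_{f,F}t_{f,F}^*,
\]
which in $\mathcal{T}(F)$ is \emph{strictly} smaller than $q_{v,F}$ (the difference is exactly the defect projection at $v$), so the relation $T_{e_0}^*T_{e_0}=Q_{v}$ fails and no such $\Psi$ exists by the universal property. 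Worse, if you run your bookkeeping formally with this prescription, each defect element $q_{w,E}-\sum t_{e,E}t_{e,E}^*$ is sent to the corresponding defect element in $\mathcal{T}(F)$ (including at $u$, since $\Psi(t_{e_0,E})\Psi(t_{e_0,E})^*=\sum_f t_{\tilde f,F}t_{\tilde f,F}^*$), so you would obtain the identity on kernels rather than $e_u\mapsto e_u-e_v$; the ``extra $-x_ue_v$'' you invoke does not actually come out of your formulas.

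The repair is to run the Toeplitz argument in the opposite direction, which is what the paper does: define a Toeplitz--Cuntz--Krieger $F$-family inside $\mathcal{T}(E)$ by $Q_w=q_{w,E}$ and $T_{\tilde f}=t_{e_0,E}\,t_{f,E}$ (edge compositions do satisfy the Toeplitz relations), obtain $\overline\Psi\colon\mathcal{T}(F)\to\mathcal{T}(E)$ covering $\Phi^{-1}$, and compute $K_1(\overline\Psi)$. There the failure of $\sum_{f\in s_E^{-1}(v)}t_{f,E}t_{f,E}^*$ to equal $q_{v,E}$ in $\mathcal{T}(E)$ is precisely what produces the extra class $e_v$, giving $e_u\mapsto e_u+e_v$, i.e.\ $(W^\bullet)^{\mathsf T}$ for $\overline\Psi$ and hence $((W^\bullet)^{\mathsf T})^{-1}$ for $\Phi$, as required. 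A smaller point: in the column case your justification that $K_1$ is the identity (``$\Phi$ acts trivially on the edge part'') is not accurate --- the generators are genuinely modified at $u$ (edges into $u$ are compressed by $q_{u}^E-t_{f}^E(t_{f}^E)^*$ and the new edges go to compositions) --- but the conclusion is correct and follows, as in the paper, from checking that each defect element again maps to the corresponding defect element.
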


\begin{proof}
Throughout the proof, $\pi_G$ will denote the canonical surjective homomorphism from $\mathcal{T} (G)$ to $C^* (G)$, where $G$ is a graph. 
Also, $q_w^G$ and $t_e^G$ will denote the canonical generators of  $\mathcal{T} (G)$ while $p_w^G$ and $s_e^G$ denotes the canonical generators of $C^*(G)$. 
See Section~\ref{sec:edge} for more details and notation. 

\textbf{Part one:} We first prove the row addition part of the proposition.  Let $f \in s^{-1}(u) \cap r^{-1}(v)$ which exists since $\Asf_E (u,v) > 0$.  Note that $F$ can be described as follows:  $F^0 = E^0$, 
\[
F^1 = \left( E^1 \setminus \{ f \} \right) \sqcup \{ \overline{e} : e \in s_E^{-1} ( v ) \}
\]
where $s_F ( e ) = s_E (e)$ and $r_F (e) = r_E (e)$ for all $e \in E^1 \setminus \{f\}$, $s_F ( \overline{e} ) = u$ and $r_F( \overline{e} ) = r_E (e)$.

We now define a Toeplitz-Cuntz-Krieger $F$-family in $\mathcal{T} (E)$.  Set $Q_w = q_w^E$ and 
\[
T_g = 
\begin{cases}
t_e^E 	&\text{if }g = e \in E^1 \setminus \{ f \} \\
t_{fe}^E 	&\text{if }g = \overline{e}\text{ for some }e\in s_E^{-1}(v).
\end{cases}
\] 
A computation shows that $\{ Q_w , T_g : w \in F^0, g \in F^1 \}$ is a Toeplitz-Cuntz-Krieger $F$-family such that 
\begin{equation}\label{eq: adding rows}
 Q_w - \sum_{ g \in s^{-1}_F ( w ) } T_g T_g^* \in \ker ( \pi_E )
\end{equation}
for all $w\in F_{\mathrm{reg}}^0$.  Then, there exists a \starhomo $\Psi \colon \mathcal{T} (F) \rightarrow \mathcal{T} (E)$ such that $\Psi ( q_w^F) = Q_w$ and $\Psi ( t_g^F ) = T_g$. Equation~\eqref{eq: adding rows} implies that there exists a (unique) \starhomo $\overline{\Psi} \colon C^* (F) \rightarrow C^* (E)$ such that $\pi_E \circ \Psi = \overline{\Psi} \circ \pi_F$.

Note that the identity map on $E^0 = F^0$ induces a lattice isomorphism between the lattice of saturated hereditary subsets of $F^0$ and the lattice of saturated hereditary subsets of $E^0$.  Since
\[
\overline{\Psi} ( p_w^F ) = ( \overline{ \Psi } \circ \pi_F)( q_w^F ) = ( \pi_E \circ \Psi ) ( q_w^F ) = \pi_E ( Q_w ) = \pi_E ( q_w^E ) = p_w^E,
\]
$\overline{\Psi} ( \mathfrak{J}_H^F ) \subseteq \mathfrak{J}_H^E$ for all saturated hereditary subsets $H$ of $F^0$.  We have just shown that $\overline{\Psi}$ is a $\mathcal{P}$-equivariant homomorphism.

By \cite[Theorem~1.2]{MR1914564}, $\overline{ \Psi }$ is injective.  Note that the only generator of $C^* (F)$ that is not in the image of $\overline{ \Psi }$ is $s_f^E$.  Since 
\[
s_f^E = s_f^E p_v^E = s_f^E \sum_{ e \in s^{-1}_E (v) } s_e^E (s_e^E)^* = \sum_{ e \in s^{-1}_E ( v ) } \pi_E(T_{\overline{e} } T_{e}^*) \in \overline{ \Psi } ( C^* (F) ),
\]
it follows that $\overline{\Psi}$ is surjective.  Therefore, $\overline{\Psi} ( \mathfrak{J}_H^F ) = \mathfrak{J}_H^E$ for all saturated hereditary subsets $H$ of $F^0$, and, consequently $\overline{\Psi}$ is a $\mathcal{P}$-equivariant isomorphism (since $\overline{\Psi}$ is a \stariso).

Let $\Phi \colon C^* (E) \rightarrow C^* (F)$ be the inverse of $\overline{\Psi}$.  To prove that $\FKR ( \calP ; \Phi ) = \FKR ( W^\bullet, I )$, it is enough to prove that $\FKR ( \calP ; \overline{\Psi} ) = \FKR ( (W^\bullet)^{-1}, I )$.

Since $\overline{\Psi} ( p_w^F) = p_w^E$, $\overline{ \Psi }$ sends $[ p_w^F ]_0 \in K_0 ( \mathfrak{J}_H^F )$ to $[ p_w^E ]_0 \in K_0 ( \mathfrak{J}_H^E)$ which implies that $\overline{ \Psi }$ induces the desired maps on $K_0$.

Now we want to show that $\overline{\Psi}$ induces the desired maps on $K_1$ (of the gauge simple subquotients of $C^* (F)$). For all components that do not contain both $u$ and $v$, it is clear that this holds (since both maps are clearly the identity map). 
In the opposite direction, if $u$ and $v$ both belong to a component, we only have to restrict to the subgraph of that component. For notational convenience, we show this for $C^* (E)$ instead, corresponding to the gauge simple case --- the general case is analogous. Let $\mathbf{x} = ( x_w )_{ w \in F^0_{\mathrm{reg}} } \in \ker \left( ( \Bsf_{F}^\bullet )^\mathsf{T} \right)$.  Note that 
\[
(W^\bullet)^\mathsf{T} (\mathbf{x} ) = \sum_{ \substack{ x_w \neq 0 \\ w \neq v } } x_w e_w + (x_v + x_u) e_v.
\]   

In $K_0 ( \ker ( \pi_E ) )$, 
\begin{align*}
&\sum_{ x_w \neq 0 } x_w \left[ \Psi ( q_w^F ) - \sum_{ g \in s_F^{-1}(w)  } \Psi ( t_g^F ) \Psi ( t_g^F)^* \right]_0 \\
&\quad = \sum_{ \substack{ x_w \neq 0  \\ w \neq u } } x_w \left[ q_w^E - \sum_{ e \in s_E^{-1} (w) } t_e^E ( t_e^E)^*\right]_0 \\
&\qquad + x_u \left[ q_u^E  - \sum_{ e \in s_E^{-1} ( u ) \setminus \{f\} } t_e^E ( t_e^E )^* - \sum_{ e\in s_E^{-1} (v) } t_{fe}^E ( t_{fe}^E )^*\right]_0. 
\end{align*}
Since $v$ is regular, $q_v^E - \sum_{ e\in s_E^{-1} (v) } t_e^E ( t_e^E )^* \in \ker( \pi_E )$.  Hence, in $K_0 ( \ker ( \pi_E ) )$,
\begin{align*}
&\left[ q_u^E  - \sum_{ e \in s_E^{-1} ( u ) \setminus \{f\} } t_e^E ( t_e^E )^* - \sum_{ e\in s_E^{-1} (v) } t_{fe}^E ( t_{fe}^E )^*\right]_0  \\
&\quad = \left[ q_u^E -  \sum_{ e \in s_E^{-1} ( u ) } t_e^E ( t_e^E )^* \right]_0 + \left[ t_f^E \left( q_v^E - \sum_{ e\in s_E^{-1} (v) } t_e^E ( t_e^E )^* \right) (t_f^E)^* \right]_0 \\
&\quad = \left[ q_u^E -  \sum_{ e \in s_E^{-1} ( u ) } t_e^E ( t_e^E )^* \right]_0 + \left[ q_v^E - \sum_{ e\in s_E^{-1} (v) } t_e^E ( t_e^E )^* \right]_0.
\end{align*}
Thus,
\begin{align*}
&\sum_{ x_w \neq 0 } x_w \left[ \Psi ( q_w^F ) - \sum_{ g \in s_F^{-1}(w) } \Psi ( t_g^F ) \Psi ( t_g^F)^* \right]_0  \\
&\quad = \sum_{ \substack{ x_w \neq 0  \\ w \neq u  } } x_w \left[ q_w^E - \sum_{ e \in s_E^{-1} (w) } t_e^E ( t_e^E)^*\right]_0 \\
&\qquad + x_u\left[ q_u^E -  \sum_{ e \in s_E^{-1} ( u ) } t_e^E ( t_e^E )^* \right]_0 + x_u\left[ q_v^E - \sum_{ e\in s_E^{-1} (v) } t_e^E ( t_e^E )^* \right]_0.
\end{align*}
So,
\begin{align*}
&(\kappa_E \circ  \partial_1^E \circ K_1 ( \overline{\Psi} ) \circ \chi_1^F ) ( \mathbf{x} ) \\
&\quad =  \left( \kappa_E \circ K_0 ( \Psi ) \circ \partial_1^F \circ \chi_1^F \right) ( \mathbf{x} ) \\
&\quad = \sum_{ \substack{ x_w \neq 0  \\ w \neq u } } x_w \kappa_E \left( \left[ q_w^E - \sum_{ e \in s_E^{-1} (w) } t_e^E ( t_e^E)^*\right]_0 \right) \\
&\qquad + x_u \kappa_E \left( \left[ q_u^E -  \sum_{ e \in s_E^{-1} ( u ) } t_e^E ( t_e^E )^* \right]_0 \right) + x_u \kappa_E \left( \left[ q_v^E - \sum_{ e\in s_E^{-1} (v) } t_e^E ( t_e^E )^* \right]_0 \right) \\
&\quad=  \sum_{ \substack{ x_w \neq 0 \\ w \neq u }  } x_w e_w + x_u e_u + x_u e_v \\
&\quad = \sum_{ \substack{ x_w \neq 0 \\ w \neq v } } x_w e_w  + ( x_u + x_v ) e_v.
\end{align*}
Hence, $\overline{\Psi}$ induces the desired maps on $K_1$.

\textbf{Part two:} We now prove the column addition part of the proposition.  Note that $F$ can be described as follows:  Take an edge $f$ from $u$ to $v$.  Set $F^0 = E^0$, 
\[
F^1 = \left( E^1 \setminus \{f\} \right) \sqcup \{ \overline{e} : e \in r_E^{-1} (u) \}
\]
with $s_F ( e ) = s_E (e)$, $r_F (e) = r_E (e)$ for all $e \in E^1 \setminus \{ f \}$, and $s_F ( \overline{e} ) = s_E (e)$ and $r_F ( \overline{e} ) = v$.

We define a Toeplitz-Cuntz-Krieger $F$-family in $\mathcal{T} ( E )$.  Set 
\[
Q_w = 
\begin{cases}
q_w^E &\text{if $w \neq u$} \\
q_u^E - t_f^E (t_f^E)^* &\text{if $w = u$}.
\end{cases}
\]
and 
\[
T_g = 
\begin{cases}
t_g^E &\text{if }g\in E^1\setminus\{f\}\text{ and }r_E ( g ) \neq u, \\
t_g^E Q_u &\text{if }g\in E^1\setminus\{f\}\text{ and }r_E ( g ) = u, \\
t_{ef}^E &\text{if }g = \overline{e}\text{ for some }e\in r_E^{-1}(u).
\end{cases}
\]
A computation shows that $\{ Q_w , T_g : w \in F^0 , g \in F^1 \}$ is a Toeplitz-Cuntz-Krieger $F$-family in $\mathcal{T} ( E )$ such that 
\begin{equation}\label{eq: adding columns}
 Q_w - \sum_{ g \in s^{-1}_F ( w ) } T_g T_g^* \in \ker ( \pi_E )
\end{equation}
for all $w \in F^0_\mathrm{reg}$.  Then, there exists a \starhomo $\Psi \colon \mathcal{T} (F) \rightarrow \mathcal{T} (E)$ such that $\Psi ( q_w^F) = Q_w$ and $\Psi ( t_g^F ) = T_g$. Equation~\eqref{eq: adding columns} implies that there exists a (unique) \starhomo $\overline{\Psi} \colon C^* (F) \rightarrow C^* (E)$ such that $\pi_E \circ \Psi = \overline{ \Psi } \circ \pi_F$.  Moreover, $\overline{ \Psi } ( C^* (F) ) = P C^* (E) P$, where $P = \sum_{ w \in E^0 } \pi_E(Q_w)$.  Note that $P C^* (E) P$ is full in $C^* (E)$.  Also, by \cite[Theorem~1.2]{MR1914564}, $\overline{\Psi}$ is injective.  Hence, $\overline{ \Psi } \colon C^* (F) \rightarrow P C^* (E) P$ is a \stariso.  

Since $P C^* (E) P$ is full in $C^* (E)$, it follows from \cite[Corollary~2.6 and its proof]{MR0454645} that there exists $V$ in the multiplier algebra $\mathcal{M}{(C^* (E) \otimes \K } )$ of $C^*(E) \otimes \K$ such that $V^{*} V = P \otimes 1_{ \mathbb{B} ( \ell^{2} ) }$ and $VV^{*} = 1_{ \mathcal{M} ( C^{*} (E)  ) } \otimes 1_{ \mathbb{B} ( \ell^{2} ) }$.  Then $\mathrm{Ad} ( V) \circ ( \overline{ \Psi }  \otimes \mathrm{id}_{ \K } ) : C^{*} (F) \otimes \K \rightarrow C^{*} (E) \otimes \K$ is a \stariso.  

We will show that $\mathrm{Ad} ( V) \circ ( \overline{ \Psi }  \otimes \mathrm{id}_{ \K } )$ is a $\mathcal{P}$-equivariant isomorphism.  We first show that $\overline{ \Psi }$ is a $\mathcal{P}$-equivariant homomorphism, which implies that $\mathrm{Ad} ( V) \circ ( \overline{ \Psi }  \otimes \mathrm{id}_{ \K } )$ is a $\mathcal{P}$-equivariant homomorphism.  Note that the identity map on $E^0 = F^0$ induces a lattice isomorphism between the lattice of saturated hereditary subsets of $F^0$ and the lattice of saturated hereditary subsets of $E^0$.  Let $H$ be a hereditary subset of $F^0$.  Let $w \in H$.  Since $\overline{\Psi} ( p_w^F)  \leq p_w^E$, we have that $\overline{ \Psi } ( p_w^F ) \in \mathfrak{J}_H^E$.  Hence, $\overline{\Psi} ( \mathfrak{J}_H^F ) \subseteq  \mathfrak{J}_H^E$.  We have just shown that $\overline{\Psi}$ is a $\mathcal{P}$-equivariant homomorphism.

Let $H$ be a saturated hereditary subset of $F^0$.  Then $\mathrm{Ad} ( V) \circ ( \overline{ \Psi }  \otimes \mathrm{id}_{ \K } )( \mathfrak{J}_H^F \otimes \K )$ is the ideal of $C^* (E) \otimes \K$ generated by $\setof{ V (\pi_E(Q_w) \otimes e_{11}) V^* }{ w \in H }$, where $e_{11}$ is a minimal nonzero projection in $\K$.  Since $V (\pi_E(Q_w) \otimes e_{11}) V^*$ is Murray-von Neumann equivalent to $\pi_E(Q_w) \otimes e_{11}$ for all $w \in H$, we have that $\mathrm{Ad} ( V) \circ ( \overline{ \Psi }  \otimes \mathrm{id}_{ \K } )( \mathfrak{J}_H^F \otimes \K )$ is the ideal of $C^* (E) \otimes \K$ generated by $\setof{ \pi_E(Q_w) \otimes e_{11} }{ w \in H }$.  Note that 
\[
p_u^E \otimes e_{11} = \left( p_u^E - s_f^E (s_f^E)^*\right) \otimes e_{11} + \left(s_f^E (s_f^E)^*\right) \otimes e_{11}
\]
and $\left(s_f^E (s_f^E)^*\right) \otimes e_{11}$ is Murray-von Neumann equivalent to $p_v^E \otimes e_{22}$, where $e_{22}$ is a minimal nonzero projection in $\K$ that is orthogonal to $e_{11}$.  Hence, $p_u^E \otimes e_{11}$ is Murray-von Neumann equivalent to $( p_u^E - s_f^E (s_f^E)^*) \otimes e_{11} + p_v^E \otimes e_{22}$ which is in the ideal of $C^* (E) \otimes \K$ generated by $\setof{\pi_E( Q_w) \otimes e_{11} }{ w \in H }$ whenever $u\in H$.  Therefore, the ideal of $C^* (E) \otimes \K$ generated by $\setof{ \pi_E( Q_w  )\otimes e_{11} }{ w \in H }$ is $\mathfrak{J}_H^E \otimes \K$.  Thus, $\mathrm{Ad} ( V) \circ ( \overline{ \Psi }  \otimes \mathrm{id}_{ \K } )( \mathfrak{J}_H^F \otimes \K ) = \mathfrak{J}_H^E \otimes \K$.  We have just shown that $\mathrm{Ad} ( V) \circ ( \overline{ \Psi }  \otimes \mathrm{id}_{ \K } )$ is a $\mathcal{P}$-equivariant isomorphism.

We now show that $\mathrm{Ad} ( V) \circ ( \overline{ \Psi }  \otimes \mathrm{id}_{ \K } )$ induces $\FKRs ( I , W^{-1} )$.  Since $V$ is an isometry in the multiplier algebra $\mathcal{M}{(C^* (E) \otimes \K } )$ of $C^*(E) \otimes \K$, it is enough to show that $\overline{\Psi}$ induces $\FKR ( I , W^{-1} )$ (\cf\ \cite[Lemma~3.4]{MR2379290}). 
Note that  
\[
\left(W^{-1}\right)^\mathsf{T} ( e_w ) =
\begin{cases}
e_w &\text{if }w \neq u, \\
e_u - e_v &\text{if }w = u.
\end{cases}
\]

Let $H$ be a saturated subset $H$ of $F^0$. It is trivial to check that the induced maps are equal on $K_0$ of $\mathfrak{J}_H^E$ whenever $u\not\in H$. So assume that $u \in H$. Then $p_u^E , s_f^E \in \mathfrak{J}_H^E$, and, consequently, we have that
\begin{align*}
\sum_{w \in H } x_w \left[  \overline{\Psi} ( p_w^F ) \right]_0 &= \sum_{ w \in H \setminus \{ u \} } x_w \left[ p_w^E \right]_0 + x_u \left[  p_u^E - s_f^E (s_f^E)^* \right]_0 \\
&= \sum_{ w \in H \setminus \{ u \} } x_w \left[ p_w^E \right]_0 + x_u \left[  p_u^E  \right]_0 - x_u \left[ s_f^E (s_f^E)^* \right]_0 \\
&= \sum_{ w \in H \setminus \{ u \} } x_w \left[ p_w^E \right]_0 + x_u \left[  p_u^E  \right]_0 - x_u \left[ (s_f^E)^* s_f^E \right]_0 \\
&= \sum_{ w \in H \setminus \{ u \} } x_w \left[ p_w^E \right]_0 + x_u \left[  p_u^E  \right]_0 - x_u \left[ p_{r_E (f) }^E \right]_0   \\
&= \sum_{ w \in H \setminus \{ u \} } x_w \left[ p_w^E \right]_0 + x_u \left[  p_u^E  \right]_0 - x_u \left[ p_{v }^E \right]_0 
\end{align*}
in $K_0 ( \mathfrak{J}_H^E)$. 
So,
\[
\left( K_0( \overline{\Psi} )  \circ \chi_0^H \right)\left( \sum_{ w \in H } x_w \overline{e}_w \right) = \sum_{w \in H } x_w \left[  \overline{\Psi} ( p_w^F ) \right]_0 =  \left( \chi_0^H\circ (W^{-1})^\mathsf{T} \right)\left( \sum_{ w \in H } x_w \overline{e}_w \right)
\]
in $K_0 ( \mathfrak{J}_H^E)$.
Thus, $\overline{\Psi}$ induces the desired maps on $K_0$.

We now show that $\overline{\Psi}$ induces the desired maps on $K_1$.  Let $w \in F^0_\mathrm{reg} = E^0_\mathrm{reg}$ and $w \neq u$.  Then
\begin{align*}
&\Psi ( q_w^F ) - \sum_{ g \in s_F^{-1} (w) } \Psi ( t_g^F (t_g^F)^* ) \\
&\quad = q_w^E - \sum_{ \substack{ e \in s_E^{-1}(w) \\ r_E (e) \neq u } } t_e^E (t_e^E)^* - \sum_{ \substack{ e \in s_E^{-1}(w) \\ r_E (e) = u } } t_e^E \left( q_u^E - t_f^E (t_f^E)^* \right)(t_e^E)^* - \sum_{ \substack{ e \in s_E^{-1} (w)  \\ r_E (e) = u } } t_e^E t_f^E (t_f^E)^* (t_e^E)^* \\
&\qquad  = q_w^E - \sum_{ \substack{ e \in s_E^{-1}(w) \\ r_E (e) \neq u } } t_e^E (t_e^E)^* - \sum_{ \substack{ e \in s_E^{-1}(w) \\ r_E (e) = u } } t_e^E (t_e^E)^*  \\
&\quad = q_w^E - \sum_{ e \in s_E^{-1} (w) } t_e^E (t_e^E)^*.
\end{align*}
If $u$ is regular, then
\begin{align*}
&\Psi ( q_u^F ) - \sum_{ g \in s_F^{-1} (u) } \Psi ( t_g^F (t_g^F)^* ) \\
&\quad = q_u^E - t_f^E (t_f^E)^*- \sum_{ \substack{ e \in  (s_E^{-1}(u) \setminus \{f \} ) \\ r_E (e) \neq u } } t_e^E (t_e^E)^* - \sum_{ \substack{ e \in s_E^{-1}(u) \\ r_E (e) = u } } t_e^E ( q_u^E - t_f^E (t_f^E)^* ) (t_e^E)^* \\
&\qquad - \sum_{ \substack{ e \in s_E^{-1} (u)  \\ r_E (e) = u } } t_e^E t_f^E (t_f^E)^* (t_e^E)^* \\
&\quad = q_u^E - \sum_{ \substack{ e \in  s_E^{-1}(u)  \\ r_E (e) \neq u } } t_e^E (t_e^E)^* - \sum_{ \substack{ e \in s_E^{-1}(u) \\ r_E (e) = u }}t_e^E(t_e^E)^* \\
&\quad = q_u^E - \sum_{ e \in s_E^{-1} (u) } t_e^E (t_e^E)^*.
\end{align*}

Now we want to show that $\overline{\Psi}$ induces the desired maps on $K_1$ (of the gauge simple subquotients of $C^* (E)$). For all components that do not contain both $u$ and $v$, it is clear that this holds (since both maps are clearly the identity map). 
In the opposite direction, if $u$ and $v$ both belong to a component, we only have to restrict to the subgraph of that component. For notational convenience, we show this for $C^* (E)$ instead, corresponding to the gauge simple case --- the general case is analogous. 
Let $\mathbf{x} = ( x_w )_{ w \in F^0_{\mathrm{reg}} } \in \ker \left( ( \Bsf_{F}^\bullet )^\mathsf{T} \right)$.  Then, in $K_0 ( \ker ( \pi_{ E} ) )$, we have that 
\[
\sum_{x_w \neq 0 } x_w \left[ \Psi ( q_w^F ) - \sum_{ e \in s_F^{-1} (w) } \Psi ( t_e^F (t_e^F)^* ) \right]_0 = \sum_{x_w \neq 0 } x_w \left[ q_w^E  - \sum_{ e \in s_E^{-1} (w) }  t_e^E (t_e^E)^* ) \right]_0. 
\]
So, 
\begin{align*}
(\kappa_E \circ  \partial_1^E \circ K_1 ( \overline{\Psi} ) \circ \chi_1^F ) ( \mathbf{x} ) &=  \left( \kappa_E \circ K_0 ( \Psi ) \circ \partial_1^F \circ \chi_1^F \right) ( \mathbf{x} ) \\
			&=  \kappa_E \left( \sum_{x_w \neq 0 } x_w \left[ q_w^E  - \sum_{ e \in s_E^{-1} (w) }  t_e^E (t_e^E)^* ) \right]_0 \right) \\
			&= \sum_{x_w \neq 0 } x_w e_w.
\end{align*}
Hence, $\overline{\Psi}$ induces the desired maps on $K_1$.

Set $\Phi = \left( \mathrm{Ad} ( V) \circ ( \overline{ \Psi }  \otimes \mathrm{id}_{ \K } ) \right)^{-1} : C^{*} (E) \otimes \K \rightarrow C^{*} (F) \otimes \K$.  Then $\Phi$ is a $\mathcal{P}$-equivariant isomorphism that induces $ \FKRs ( I , W)$.
\end{proof}

Recall from \citefirst[Definition~\ref{I-def: positive matrices}] that when $E$ is a finite graph with no sinks and no sources satisfying Condition~(K), then we write $\Bsf_E\in\MPplusZ[\mathbf{n}]$ whenever $\Bsf_E\in\MPZ[\mathbf{n}]$ and for all $i,j\in\calP$ 
\begin{enumerate}[(i)]
\item if $i\preceq j$, then $\Bsf_E\{i, j\} > 0$,
\item $n_i\geq 3$ and the Smith normal form of $\Bsf_E\{i\}$ has at least two $1$’s (and thus the rank of $\Bsf_E \{i\}$ is at least 2).
\end{enumerate} 

\begin{remark}
Let $E$ be a finite graph with no sinks and no sources satisfying Condition~(K) such that $\Bsf_E\in\MPplusZ[\mathbf{n}]$.  Then $\Bsf_E \in \MPZccc[\mathbf{n}]$ and every vertex in $E$ emits at least two edges since the diagonal entries $\Asf_E = \Bsf_E + I$ are greater than or equal to 2.
\end{remark}

\begin{lemma}\label{lem:stdform}
Let $A$ be a nondegenerate matrix that satisfies Condition~(II). 
Then there exists a finite graph $E$ with no sinks and no sources that satisfies Condition~(K) such that $\Ocal_{A}\otimes\K \cong C^{*}(E)\otimes\K$ and $\Bsf_E\in\MPplusZ[\mathbf{n}]$, for a suitable $\mathbf{n}$ and $\calP$.
\end{lemma}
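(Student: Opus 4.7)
The plan is to first pass from $\mathcal{O}_A$ to a graph $C^*$-algebra. Let $E_A$ be the directed graph whose adjacency matrix equals $A$, so that $\mathcal{O}_A \cong C^*(E_A)$. Nondegeneracy of $A$ forces $E_A$ to have no sinks and no sources, and Condition~(II) on $A$ is exactly Condition~(K) on $E_A$. Take $\mathcal{P}$ to be the poset of strongly connected components of $E_A$ ordered by reachability, matching the primitive ideal structure; then $\mathsf{B}_{E_A}$ acquires a block structure indexed by $\mathcal{P}$ and lies in $\mathfrak{M}_{\mathcal{P}}(\mathbf{m}\times\mathbf{m},\mathbb{Z})$ for some multiindex $\mathbf{m}$.

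It remains to massage $E_A$ into a graph $E$ whose block matrix $\mathsf{B}_E$ satisfies the two extra conditions defining $\mathfrak{M}^+_{\mathcal{P}}$, while preserving stable isomorphism of the $C^*$-algebra and the hypotheses that $E$ be finite with no sinks, no sources, and satisfy Condition~(K). I would do this in two phases. First, I apply Proposition~\ref{prop:edge-expansion} repeatedly, expanding an edge on a cycle within each strongly connected component; each expansion increases the relevant $n_i$ by one and preserves the graph $C^*$-algebra up to $\mathcal{P}$-equivariant stable isomorphism. A finite number of such expansions achieves $n_i \geq 3$ for every $i \in \mathcal{P}$.

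Second, I apply the row and column additions of Proposition~\ref{prop:toke}. Whenever $i \preceq j$ there is already at least one directed path from block $i$ to block $j$, and repeated additions propagate edges until every entry of $\mathsf{B}_E\{i,j\}$ is strictly positive; this handles condition (i), including the diagonal blocks (so that each vertex carries at least one self-loop and edges to all other vertices in its component). Further row and column additions on a diagonal block, now entrywise positive and of size at least $3$, can be organized so as to normalize at least two of the leading invariant factors in its Smith normal form to $1$, yielding condition (ii).

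The main obstacle will be the combinatorial bookkeeping: at each step the hypotheses of Propositions~\ref{prop:edge-expansion} and~\ref{prop:toke} (that the vertex being used emits at least two edges, that a cycle is available in the component, and that regularity and Condition~(K) are preserved) must continue to hold. Ordering the moves carefully — edge expansions first to enlarge components, then off-diagonal row and column additions to create positivity, then intra-block operations to normalize the Smith form — keeps this verification straightforward. In effect this is the standard-form construction developed in \cite{arXiv:1604.05439v1}, so the present lemma amounts to checking that the moves available there take $E_A$ into $\mathfrak{M}^+_{\mathcal{P}}$.
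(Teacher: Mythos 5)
There is a genuine gap in your mechanism for achieving condition (ii), the requirement that the Smith normal form of each diagonal block $\Bsf_E\{i\}$ have at least two $1$'s. You propose to obtain this by further row and column additions on a diagonal block that is already positive and of size at least $3$. But the operations of Proposition~\ref{prop:toke} cannot do this: if the two vertices involved lie in the same component $j$, the new diagonal block is $W\{j\}\Bsf_E\{j\}$ (or $\Bsf_E\{j\}W\{j\}$) with $W\{j\}$ elementary, hence $\GL$-equivalent to the old one; if they lie in different components, $W$ is block triangular with identity diagonal blocks and the diagonal blocks do not change at all. In either case the Smith normal form of every diagonal block is preserved, so no sequence of legal row and column additions can "normalize invariant factors to $1$". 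Since for a general nondegenerate $\{0,1\}$-matrix satisfying Condition~(II) the blocks of $\Bsf_{E_A}$ need not have two $1$'s in their Smith normal forms to begin with, this step fails as written. The repair is to extract condition (ii) from your first phase instead: each application of Proposition~\ref{prop:edge-expansion} in component $j$ produces a block \SLPEe to $\Bsf_F$ from $-\iota_{\mathbf{e}_j}(-\Bsf_E^\bullet)$, whose $j$'th diagonal block is $\Bsf_E\{j\}$ with a unit appended on the diagonal; hence each edge expansion adds a $1$ to the Smith normal form of that block, and performing at least two expansions per component secures both $n_i\geq 3$ and the two required $1$'s simultaneously, leaving the row and column additions to handle only positivity.

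Beyond this, note that your route differs from the paper's: the paper simply realizes $\Ocal_A$ as $C^*(F)$ for the graph $F$ with adjacency matrix $A$ and then invokes the standard-form machinery of \cite{arXiv:1604.05439v1} (move equivalence implies stable isomorphism, plus the lemma producing a move-equivalent graph in standard form), checking afterwards via the "temperature" invariant that the resulting graph still has no sinks and satisfies Condition~(K). Your self-contained reconstruction from Propositions~\ref{prop:edge-expansion} and~\ref{prop:toke} is in the spirit of the proof of Lemma~\ref{lem:enlargen} and is viable once the Smith normal form issue is fixed, but it also glosses over the preliminary reduction to a graph with no transition states and no cyclic components (i.e., getting $\Bsf_E$ into $\MPZccc$ so that the hypotheses of Proposition~\ref{prop:toke} apply), which in the paper is part of what the cited standard-form lemma delivers.
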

\begin{proof}
The matrix $A$ defines a graph $F$ (with adjacency matrix $A$) with no sinks and no sources that satisfies Condition~(K). 
The graph \ca $C^{*}(F)$ is actually isomorphic to the Cuntz-Krieger algebra $\Ocal_A$ via a \stariso defined by $p_{i, F} \mapsto s_i s_i^*$ and $s_{e, F} \mapsto s_{s_F(e)} s_{r_F(e)} s_{r_F(e)}^*$, where $\{ p_{i, F } , s_{e, F}\}$ is a Cuntz-Krieger $F$-family generating $C^*(F)$ and $\{ s_1, \dots, s_n \}$ is a universal family of partial isometries generating $\Ocal_A$.
Thus we can work with $F$ instead. 
Now it follows from \citefirst[Lemma~\ref{I-taugivesstd}] and \citefirst[Theorem~\ref{I-thm:moveimpliesstableisomorphism}] that we can choose a finite graph $E$ such that $\Bsf_E\in\MPplusZ$ and $\Ocal_{A}\otimes\K \cong C^{*}(E)\otimes\K$. 
Since $\Bsf_E\in\MPplusZ$, the graph $E$ does not have any source. 
Note that Condition~(K) with no sinks means exactly that the temperature is constant $1$, \cf\ \citefirst[Definition~4.16 and Lemma~\ref{I-charKH}\ref{I-charKH-1}]). 
Since the temperature is clearly a stable isomorphism invariant, it follows that $E$ has no sinks and satisfies Condition~(K), as well as $\mathbf{m}=\mathbf{n}$. 
\end{proof}

\begin{lemma}\label{lem:enlargen}
Let $E$ be a finite graph with no sinks and no sources satisfying Condition~(K) such that $\Bsf_E\in\MPplusZ[\mathbf{n}]$, and let $j\in\calP$ be given. 
Then there exists a finite graph $F$ with no sinks and no sources satisfying Condition~(K) such that $\Bsf_F\in\MPplusZ[\mathbf{n}+\mathbf{e}_j]$, and such that there exists a \calP-equivariant isomorphism $\Phi$ from $C^{*}(E)\otimes\K$ to $C^{*}(F)\otimes\K$ satisfying $\FKR ( \calP ; \Phi )=\FKRs(U,V)$ for some \SLPEe $(U,V)$ from $-\iota_{\mathbf{e}_j}(-\Bsf_E)$ to $\Bsf_F$. 
\end{lemma}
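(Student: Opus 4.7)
The plan is to apply Proposition~\ref{prop:edge-expansion} once, adding a vertex $\tilde{v}_0$ to block $j$ by simple expansion of a loop, and then to perform a short sequence of row and column additions from Proposition~\ref{prop:toke} to restore the positivity conditions defining $\MPplusZ$. At each step we obtain an \SLPEe together with a compatible $\calP$-equivariant stable isomorphism inducing the same map on the reduced filtered $K$-theory, and composing the full sequence yields the desired pair $(\Phi, (U, V))$.

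Since $\Bsf_E \in \MPplusZ[\mathbf{n}]$, condition~(i) with $i = j$ gives $\Bsf_E\{j\} > 0$, so any vertex $v_0$ in block $j$ supports a loop $e_0$. Let $F_0$ be the simple expansion of $E$ at $e_0$; Proposition~\ref{prop:edge-expansion} supplies an \SLPEe $(U_0, V_0)$ from $-\iota_{\mathbf{e}_j}(-\Bsf_E)$ to $\Bsf_{F_0}$ together with a $\calP$-equivariant isomorphism $\Phi_0 \colon C^*(E) \otimes \K \to C^*(F_0) \otimes \K$ satisfying $\FKR(\calP; \Phi_0) = \FKRs(U_0, V_0)$. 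The graph $F_0$ is finite, has no sinks or sources, and satisfies Condition~(K), since these are $\calP$-equivariant stable-isomorphism invariants. However, $\Bsf_{F_0}$ generally fails to lie in $\MPplusZ[\mathbf{n}+\mathbf{e}_j]$: both the row and column indexed by $\tilde{v}_0$ are almost entirely zero, and the entry $\Bsf_{F_0}(v_0, v_0) = \Bsf_E(v_0, v_0) - 1$ may vanish.

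We repair this by successively adding column $v_0$ into column $\tilde{v}_0$, then row $v_0$ into row $\tilde{v}_0$, and, if $\Bsf_E(v_0, v_0) = 1$, row $\tilde{v}_0$ into row $v_0$. A direct computation shows that after these operations every required entry in the $\tilde{v}_0$-row and column becomes positive (propagating the positivity from condition~(i) for $\Bsf_E$) and $\Bsf(v_0, v_0)$ is restored to at least $1$, placing the matrix in $\MPplusZ[\mathbf{n}+\mathbf{e}_j]$. The preconditions of Proposition~\ref{prop:toke} are verified at each step: $v_0$ is regular in every intermediate graph; $\tilde{v}_0$ is regular (having a single outgoing edge to $v_0$); and $v_0$ emits at least two edges throughout, given the edge to $\tilde{v}_0$ together with the other outgoing edges guaranteed by $\Bsf_E\{j\} > 0$ and $n_j \geq 3$. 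Each such operation therefore yields an \SLPEe supported in the $j$-th block together with a $\calP$-equivariant stable isomorphism inducing the same map on $\FKR(\calP; \cdot)$.

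Composing all these \SLPEe{s} and stable isomorphisms with $(U_0, V_0)$ and $\Phi_0$ produces the required $(U, V)$ and $\Phi$. The rank part of condition~(ii) for $\Bsf_F\{j\}$ follows because the Smith normal form of $(-\iota_{\mathbf{e}_j}(-\Bsf_E))\{j\}$ has one more $1$ than that of $\Bsf_E\{j\}$ and is preserved up to signs by the \SL-equivalence in the $j$-th block, while $n_j + 1 \geq 4 \geq 3$ trivially. The main obstacle is orchestrating the row and column additions so that the preconditions of Proposition~\ref{prop:toke} are preserved at every intermediate stage while simultaneously achieving full positivity in the new row and column and restoring the $(v_0, v_0)$ diagonal entry; but since all repairs remain strictly local to block $j$ and its comparabilities, and $\Bsf_E$ already supplies positivity elsewhere, the orchestration terminates in the three additions described above.
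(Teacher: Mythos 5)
Your proof is correct and follows essentially the same route as the paper: one application of Proposition~\ref{prop:edge-expansion} to adjoin the new vertex to block $j$, followed by row and column additions via Proposition~\ref{prop:toke} to restore the positivity and Smith-normal-form conditions defining $\MPplusZ[\mathbf{n}+\mathbf{e}_j]$, with the $K$-theoretic data tracked by composing the resulting \SLPEe{s} and $\calP$-equivariant stable isomorphisms. The only cosmetic difference is that you expand a loop at $v_0$ (which exists since the diagonal entries of $\Asf_E\{j\}$ are at least $2$), so positivity propagates directly from the row and column of $v_0$ in three additions, whereas the paper expands an arbitrary edge on a cycle and uses a slightly longer repair sequence.
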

\begin{proof}
Let $e_0$ be an edge that is part of a cycle that lies in the component corresponding to $j\in\calP$. 
Let $E'$ be the simple edge expansion graph at $e_0$, and let $v'$ denote the new vertex (\cf\ Definition~\ref{def: edge expansion homomorphism}). It follows from Proposition~\ref{prop:edge-expansion} that $\Bsf_{E'}\in \MPZc[\mathbf{n}+\mathbf{e}_j]$ and that there exists a \calP-equivariant isomorphism $\Phi'$ from $C^*(E)\otimes\K$ to $C^*(E')\otimes\K$ such that $\FKR(\calP,\Phi')=\FKRs(U',V')$ for some \SLPEe $(U',V')$ from $-\iota_{\mathbf{e}_j}(-\Bsf_E)$ to $\Bsf_{E'}$. The idea is to use Proposition~\ref{prop:toke} to construct a finite graph $F$ with no sinks and no sources satisfying Condition~(K) such that $\Bsf_F\in \MPplusZ[\mathbf{n}+\mathbf{e}_j]$, and a \calP-equivariant isomorphism $\Phi''$ from $C^*(E')\otimes\K$ to $C^*(F)\otimes\K$ such that $\FKR(\calP,\Phi'')=\FKRs(U'',V'')$ for some \SLPEe $(U'',V'')$ from $\Bsf_{E'}$ to $\Bsf_F$. We have that $\Bsf_{E'}\in \MPZc[\mathbf{n}+\mathbf{e}_j]$, and since $E$ has no transition states, no cyclic components, no sinks and no sources, it follows that $E'$ has no transition states, no cyclic components, no sinks and no sources, and thus that $\Bsf_{E'}\in\MPZccc[\mathbf{n}+\mathbf{e}_j]$.  
We may assume that $v_0=s(e_0)$ corresponds to the last row of the diagonal block $\Asf\{j\}$. Let $v_1=r(e_0)$. 
We will now perform a series of column and row additions on the matrix $\Asf_{E'}-I$ according to Proposition~\ref{prop:toke}. Using a column addition within block $j$, we can make sure that $v_0$ supports at least two loops. Adding column $v_0$ to $v'$, we get all entries of the column $v'$ in $j$'th diagonal block to be positive except for the last entry. Adding row $v_1$ twice to row $v'$, the $j$'th diagonal block is positive. Now we can use column additions and row additions to get all the off-diagonal blocks that are not forced by the order on \calP to be zero, to be positive, and we thus end up with a graph $F$ such that $\Bsf_F\in \MPplusZ[\mathbf{n}+\mathbf{e}_j]$. Since these are all legal column and row operations according to Proposition~\ref{prop:toke}, we get the desired result from Proposition~\ref{prop:edge-expansion}. 
\end{proof}

\section{Cuntz splice implies stable isomorphism}
\label{sec:cuntzsplice}

In \cite{arXiv:1602.03709v2}, it was proved that Cuntz splicing a graph gives rise to a graph whose \ca is stably isomorphic to the \ca of the original graph (generalizing a result in \cite{MR2270572}). This is used for instance in the proofs of \citefirst to get a classification result. 
As we are proving a strong classification result, we will need to know more about what this stable isomorphism induces on the reduced filtered $K$-theory. This will be the main aim of this section. In the end of the section, we will use the results to stitch together the proofs of the main results.

We recall some notation from \cite{arXiv:1602.03709v2} that will be used throughout the rest of this section.

\begin{notation}\label{notation:OnceAndTwice}
Let $\mathbf{E}_*$ and $\mathbf{E}_{**}$ denote the graphs: 
\begin{align*}
\mathbf{E}_* \  = \ \ \ \ \xymatrix{
  \bullet^{v_1} \ar@(ul,ur)[]^{e_{1}} \ar@/^/[r]^{e_{2}} & \bullet^{v_2} \ar@(ul,ur)[]^{e_{4}} \ar@/^/[l]^{e_{3}}
}
\end{align*}
\begin{align*}
\mathbf{E}_{**} \  =  \ \ \ \ \xymatrix{
	\bullet^{ w_{4} } \ar@(ul,ur)[]^{f_{10}}  \ar@/^/[r]^{ f_{9} } & \bullet^{ w_{3} } \ar@(ul,ur)[]^{f_{7}} \ar@/^/[r]^{ f_{6} }  \ar@/^/[l]^{f_{8}} &  \bullet^{w_1} 				\ar@(ul,ur)[]^{f_{1}} \ar@/^/[r]^{f_{2}} \ar@/^/[l]^{f_{5}}
	& \bullet^{w_2} \ar@(ul,ur)[]^{f_{4}} \ar@/^/[l]^{f_{3}}
	}
\end{align*}
The graph $\mathbf{E}_*$ is what we attach when we Cuntz splice. If we instead attach the graph $\mathbf{E}_{**}$, we Cuntz spliced twice. 

Let $E = ( E^{0}, E^{1} , r_{E}, s_{E} )$ be a graph and let $u$ be a vertex of $E$.
Then $E_{u, -}$ can be described as follows (up to canonical isomorphism):
\begin{align*}
E_{u,-}^{0} &= E^{0} \sqcup \mathbf{E}_{*}^{0} \\
E_{u,-}^{1} &= E^{1} \sqcup \mathbf{E}_{*}^{1} \sqcup \{ d_1, d_2 \}
\end{align*}
with $r_{E_{u,-}} \vert_{E^{1}} = r_{E}$, $s_{E_{u,-}} \vert_{ E^{1} } = s_{E}$, $r_{E_{u,-}} \vert_{\mathbf{E}_{*}^{1}} = r_{\mathbf{E}_{*}}$, $s_{E_{u,-}} \vert_{\mathbf{E}_{*}^{1}} = s_{\mathbf{E}_{*}}$, and
\begin{align*}
	s_{E_{u,-}}(d_1) &= u	& r_{E_{u,-}}(d_1) &= v_{1} \\
	s_{E_{u,-}}(d_2) &= v_1	& r_{E_{u,-}}(d_2) &= u.
\end{align*}
Moreover, $E_{u,--}$ can be described as follows (up to canonical isomorphism):
\begin{align*}
E_{u,--}^{0} &= E^{0} \sqcup \mathbf{E}_{**}^{0} \\
E_{u,--}^{1} &= E^{1} \sqcup \mathbf{E}_{**}^{1} \sqcup \{ d_1, d_2 \}
\end{align*}
with $r_{E_{u,--}} \vert_{E^{1}} = r_{E}$, $s_{E_{u,--}} \vert_{ E^{1} } = s_{E}$, $r_{E_{u,--}} \vert_{\mathbf{E}_{**}^{1}} = r_{\mathbf{E}_{**}}$, $s_{E_{u,--}} \vert_{\mathbf{E}_{**}^{1}} = s_{\mathbf{E}_{**}}$, and
\begin{align*}
	s_{E_{u,--}}(d_1) &= u		& r_{E_{u,--}}(d_1) &= w_{1} \\
	s_{E_{u,--}}(d_2) &= w_1	& r_{E_{u,--}}(d_2) &= u.
\end{align*}
\end{notation}

\begin{proposition} \label{prop:cuntzsplicetwice}
Let $E$ be a finite graph with no sinks and no sources satisfying Condition~(K) such that $\Bsf_E\in\MPplusZ[\mathbf{n}]$. 
Let $u$ be a vertex in $E^{0}$, and assume that it belongs to the component corresponding to the block $j\in\calP$.  
Then $\Bsf_{E_{u,--}}$ is an element of $\MPZ[\mathbf{n}+4\mathbf{e}_j]$, and there are an \SLPEe $(U,V)$ from $-\iota_{4\mathbf{e}_j}(-\Bsf_E)$ to  $\Bsf_{E_{u,--}}$ and a \calP-equivariant isomorphism $\Phi$ from $C^*(E)\otimes\K$ to $C^*(E_{u,--} )\otimes\K$ such that $\FKR(\calP;\Phi)=\FKRs(U,V)$. 
\end{proposition}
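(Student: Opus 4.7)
The plan is to realize the required stable isomorphism and \SLPEe as a composition of moves already handled in the previous two sections: first, four applications of Lemma~\ref{lem:enlargen} enlarge the $j$-block of $\Bsf_E$ by four rows and columns via edge expansions; second, a finite sequence of row and column additions covered by Proposition~\ref{prop:toke} reshapes the result into $\Bsf_{E_{u,--}}$. Each move simultaneously yields a $\calP$-equivariant stable isomorphism and an elementary \SLPEe inducing the same map on $\FKR(\calP;-)$, so composing them gives the desired data. To see that $\Bsf_{E_{u,--}}$ is in $\MPZ[\mathbf{n}+4\mathbf{e}_j]$, observe that the four vertices $w_1,w_2,w_3,w_4$ of $\mathbf{E}_{**}$ are internally strongly connected and are joined to $u$ by $d_1$ and $d_2$, so together with the component of $u$ they form a single strongly connected component indexed by $j$; Condition~(K) survives (every new cycle has an exit through $w_3$, $w_1$, or $u$), and no sinks or sources are introduced.

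For Step~1, iterate Lemma~\ref{lem:enlargen} four times along an edge lying on a cycle in the component indexed by $j$. This produces a finite graph $\tilde E$ with no sinks or sources, satisfying Condition~(K), with $\Bsf_{\tilde E}\in\MPplusZ[\mathbf{n}+4\mathbf{e}_j]$, together with a $\calP$-equivariant isomorphism $\Phi_0\colon C^*(E)\otimes\K\to C^*(\tilde E)\otimes\K$ and an \SLPEe $(U_0,V_0)$ from $-\iota_{4\mathbf{e}_j}(-\Bsf_E)$ to $\Bsf_{\tilde E}$ satisfying $\FKR(\calP;\Phi_0)=\FKRs(U_0,V_0)$. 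That the four iterated single-step \SLPEes compose to one with source $-\iota_{4\mathbf{e}_j}(-\Bsf_E)$ follows from the naturality of $\iota_{\mathbf{e}_j}$ together with the functoriality of $\FKRs$ under composition of \SLPEe s.

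For Step~2, design an explicit finite sequence of row and column additions within the enlarged $j$-block carrying $\Bsf_{\tilde E}$ to $\Bsf_{E_{u,--}}$. The positivity $\Bsf_{\tilde E}\in\MPplusZ[\mathbf{n}+4\mathbf{e}_j]$ ensures that at the outset every vertex in that block is regular and emits at least two edges, so all the initial additions are legal under Proposition~\ref{prop:toke}; care must be taken to order the moves so that the legality hypotheses continue to hold at every intermediate stage. Each move contributes a compatible pair consisting of a $\calP$-equivariant isomorphism and an \SLPEe; composing these yields $\Phi_1\colon C^*(\tilde E)\otimes\K\to C^*(E_{u,--})\otimes\K$ and an \SLPEe $(U_1,V_1)$ from $\Bsf_{\tilde E}$ to $\Bsf_{E_{u,--}}$ with $\FKR(\calP;\Phi_1)=\FKRs(U_1,V_1)$. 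Setting $\Phi=\Phi_1\circ\Phi_0$ and composing the \SLPEe s gives the conclusion by functoriality of $\FKR(\calP;-)$.

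The main obstacle is Step~2: writing down a concrete, executable sequence of row and column additions that (a)~carries the highly connected $\tilde E$ to the much sparser $E_{u,--}$ and (b)~respects the legality hypotheses of Proposition~\ref{prop:toke} at every intermediate step (in particular, never reducing the out-degree of the chosen source vertex below~$2$ for a column addition, nor spoiling regularity for a row addition). The existence of a suitable sequence is ultimately guaranteed by the matrix-level construction underlying the Cuntz-splice stable isomorphism in \cite{arXiv:1602.03709v2, MR2270572}; the additional work here is to perform that construction one elementary move at a time and to record the induced $K$-theoretic data at each step, which Proposition~\ref{prop:toke} and Lemma~\ref{lem:enlargen} make possible.
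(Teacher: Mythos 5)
There is a genuine gap, and it sits exactly where you flag your ``main obstacle.'' Your Step~2 asserts that some finite chain of legal row and column additions carries the positivized matrix $\Bsf_{\tilde E}\in\MPplusZ[\mathbf{n}+4\mathbf{e}_j]$ to the sparse matrix $\Bsf_{E_{u,--}}$, and you justify its existence by appeal to ``the matrix-level construction underlying the Cuntz-splice stable isomorphism'' in \cite{arXiv:1602.03709v2,MR2270572}. That justification does not work: the stable isomorphism in \cite{arXiv:1602.03709v2} is not built from elementary row/column moves (it comes from an explicit pair of Cuntz--Krieger families inside an auxiliary \ca, as recalled in the proof of Theorem~\ref{thm:cuntz-splice-1}), so it supplies no such factorization. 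The tool that actually produces a chain of legal elementary moves is Boyle's positive factorization theorem \cite[Theorem~4.4]{MR1907894}, which you never invoke --- and it only applies when \emph{both} endpoint matrices are in the positive form $\MPplusZ$. Since $\Bsf_{E_{u,--}}\notin\MPplusZ[\mathbf{n}+4\mathbf{e}_j]$ (the spliced-on block has many zero entries), you cannot terminate your chain at $\Bsf_{E_{u,--}}$ directly. The paper's proof resolves this by a ``small detour'': it positivizes $E$ \emph{and} $E_{u,--}$ separately (via Lemma~\ref{lem:enlargen}-style additions and Proposition~\ref{prop:toke}) to graphs $F$, $F'$ with $\Bsf_F,\Bsf_{F'}\in\MPplusZ[\mathbf{n}+4\mathbf{e}_j]$, applies Boyle's theorem to the composed \SLPEe from $\Bsf_F$ to $\Bsf_{F'}$, and then conjugates back.

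A second, related omission: you never verify that an \SLPEe from $-\iota_{4\mathbf{e}_j}(-\Bsf_E)$ to $\Bsf_{E_{u,--}}$ exists at all. This is the sign computation that distinguishes splicing twice from splicing once --- a single Cuntz splice only yields a \GLPEe whose $j$'th block of $V$ has determinant $-1$ (Theorem~\ref{thm:cuntz-splice-1}), and any chain of elementary moves would force all determinants to be $+1$, so the existence of your chain already presupposes this unproved fact. The paper handles it by writing down explicit $U,V\in\SLPZ[\mathbf{n}+4\mathbf{e}_j]$, supported on the $j$'th block, and checking directly that their blocks have determinant $1$ and that $U\bigl(-\iota_{4\mathbf{e}_j}(-\Bsf_E)\bigr)V=\Bsf_{E_{u,--}}$. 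Your proof needs both of these ingredients --- the explicit \SLPEe and the Boyle factorization applied between two $\MPplusZ$ matrices --- before the composition argument in your Steps~1 and~2 (which is otherwise sound and matches the paper's strategy) can go through.
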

\begin{proof}
The $j$'th diagonal block of $\Bsf_{E_{u,--}}$ can be described as the matrix 
$$\begin{pmatrix}
\Bsf_{E}\{j\} & \begin{smallpmatrix} 0&0&0&0 \\ \vdots & \vdots & \vdots & \vdots \\ 1 & 0 & 0 & 0 \end{smallpmatrix} \\ 
\begin{smallpmatrix}0 & \cdots & 1 \\ 0 & \cdots & 0 \\ 0 & \cdots & 0 \\ 0 & \cdots & 0  \end{smallpmatrix} & \begin{smallpmatrix}0 & 1 & 1 & 0 \\ 1 & 0 & 0 & 0 \\ 1 & 0 & 0 & 1 \\ 0 & 0 & 1 & 0\end{smallpmatrix}
\end{pmatrix}.$$
Let $U,V\in\SLPZ[\mathbf{n}+4\mathbf{e}_j]$ be the identity matrices everywhere except for the $j$'th diagonal block where they are 
$$\begin{pmatrix}
I& \begin{smallpmatrix} 0&0&0&0 \\ \vdots & \vdots & \vdots & \vdots \\ 0 & 1 & 0 & 0 \end{smallpmatrix} \\ 
\begin{smallpmatrix}0 & \cdots & 0 \\ 0 & \cdots & 0 \\ 0 & \cdots & 0 \\ 0 & \cdots & 0  \end{smallpmatrix} & \begin{smallpmatrix}1 & 0 & 0 & 1 \\ 0 & 1 & 0 & 0 \\ 0 & 0 & 1 & 0 \\ 0 & 0 & 0 & 1\end{smallpmatrix}
\end{pmatrix}$$
$$\begin{pmatrix}
I& \begin{smallpmatrix} 0&0&0&0 \\ \vdots & \vdots & \vdots & \vdots \\ 0 & 0 & 0 & 0 \end{smallpmatrix} \\ 
\begin{smallpmatrix}0 & \cdots & -1 \\ 0 & \cdots & 0 \\ 0 & \cdots & 0 \\ 0 & \cdots & 0  \end{smallpmatrix} & \begin{smallpmatrix}0 & -1 & 0 & 0 \\ -1 & 0 & 0 & 0 \\ -1 & 0 & 0 & -1 \\ 0 & 0 & -1 & 0\end{smallpmatrix}
\end{pmatrix} ,$$
respectively. It is easy to verify that these blocks have indeed determinant $1$ and that $(U,V)$ is an \SLPEe from $-\iota_{4\mathbf{e}_j}(-\Bsf_E)$ to $\Bsf_{E_{u,--}}$. 
The idea now is to deduce the existence of $\Phi$ from \cite[Theorem 4.4]{MR1907894} and Proposition~\ref{prop:toke}, but $E$ and $E_{u,--}$ do not satisfy the assumption of \cite[Theorem 4.4]{MR1907894}. We therefore make a small detour. 
By using column and row additions in the same way as in the proof of Lemma~\ref{lem:enlargen} together with Proposition~\ref{prop:toke} we see that we get two finite graphs $F$ and $F'$ satisfying Condition~(K) with no sinks and no sources such that $\Bsf_F,\Bsf_{F'}\in\MPplusZ[\mathbf{n}+4\mathbf{e}_j]$ and such that there exist \calP-equivariant isomorphisms \fctw{\Psi}{C^{*}(E)\otimes\K}{C^{*}(F)\otimes\K} and \fctw{\Psi'}{C^{*}(E_{u,--})\otimes\K}{C^{*}(F')\otimes\K} with $\FKR(\calP;\Psi)=\FKRs(U_1,V_1)$ and $\FKR(\calP;\Psi')=\FKRs(U_1',V_1')$, where $(U_1,V_1)$ and $(U_1',V_1')$ are \SLPEe{s} from $-\iota_{4\mathbf{e}_j}(-\Bsf_E)$ to $\Bsf_F$ and from $\Bsf_{E_{u,--}}$ to $\Bsf_{F'}$, respectively. The composition thus gives an \SLPEe $(U',V')$ from $\Bsf_F$ to $\Bsf_{F'}$. 

It now follows from \cite[Theorem 4.4]{MR1907894} and Proposition~\ref{prop:toke} that there is a \calP-equivariant isomorphism \fctw{\Phi'}{C^*(F)\otimes\K}{C^*(F')\otimes\K} such that $\FKR(\calP,\Phi')=\FKRs(U',V')$. Then $\Phi:=(\Psi')^{-1}\circ\Phi'\circ\Psi$ is a \calP-equivariant isomorphism from $C^*(E)\otimes\K$ to $C^*(E_{u,--})\otimes\K$ such that $\FKR(\calP,\Phi)=\FKRs(U,V)$.
\end{proof}

The following theorem is the key result to prove that the Cuntz splice induces stably isomorphic graph \cas in a way where we can control the induced maps on the reduced filtered $K$-theory. 
Since it does not increase the difficulty of the proof, for the convenience of possible later application, we state and prove the result in slightly greater generality than needed in this paper --- allowing for infinite emitters and sinks. 

\begin{theorem}\label{thm:cuntz-splice-1}
Let $E$ be a graph with finitely many vertices, and let $u$ be a regular vertex in $E^0$.  Assume that $\Bsf_E \in \MPZccc$ and that $u$ belongs to the block $j\in\calP$.  Then there exist $U\in\SLPZ[\mathbf{m}+4\mathbf{e}_j]$ and $V\in\GLPZ[\mathbf{n}+4\mathbf{e}_j]$ that are the identity matrix everywhere except for the $j$'th diagonal block, all the diagonal blocks of $U$ and $V$ have determinant $1$ except for the $j$'th diagonal block of $V$, which has determinant $-1$, and $(U,V)$ is a \GLPEe from $-\iota_{2\mathbf{e}_j}(-\Bsf_{E_{u,-}}^\bullet)$ to  $\Bsf_{E_{u,--}}^\bullet$. 
Moreover, there exists a $\calP$-equivariant isomorphism $\Phi$ from $C^{*}(E_{u,-})$ to $C^{*}(E_{u,--})$ such that $\FKR(\calP;\Phi)=\FKR(U,V)$. 
\end{theorem}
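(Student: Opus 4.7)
My proof plan splits naturally into constructing the matrices $(U,V)$ and then constructing the $\calP$-equivariant isomorphism $\Phi$. For the matrix part, I would set $U$ and $V$ to be identity outside block $j$, and define $U\{j\}$ and $V\{j\}$ as explicit $(n_j + 4) \times (n_j + 4)$ matrices that transform $-\iota_{2\mathbf{e}_j}(-\Bsf_{E_{u,-}}^\bullet)\{j\}$ — namely $\Bsf_{E_{u,-}}\{j\}$ (containing the $2 \times 2$ sub-block $\bigl(\begin{smallmatrix} 0 & 1 \\ 1 & 0 \end{smallmatrix}\bigr)$ from the $\mathbf{E}_*$ attachment) padded with an extra $-I_2$ diagonal block from the $\iota_{2\mathbf{e}_j}$ embedding — into $\Bsf_{E_{u,--}}^\bullet\{j\}$, which contains the $4 \times 4$ block $\Bsf_{\mathbf{E}_{**}}$ with its connecting entries to $u$. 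A direct matrix calculation, patterned after the one in Proposition~\ref{prop:cuntzsplicetwice}, yields $\det(U\{j\}) = +1$ and $\det(V\{j\}) = -1$; the sign flip is forced, since $\det \Bsf_{\mathbf{E}_*} = -1$ whereas $\det \Bsf_{\mathbf{E}_{**}} = +1$.

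For the isomorphism $\Phi$, the plan relies on the combinatorial identity $(E_{u,-})_{v_1,--} = (E_{u,--})_{w_3,-}$: double Cuntz splicing $E_{u,-}$ at $v_1$ produces the same graph as single Cuntz splicing $E_{u,--}$ at its ``middle'' vertex $w_3$. I would apply Proposition~\ref{prop:cuntzsplicetwice} to $E_{u,-}$ at $v_1$ — first reducing $E_{u,-}$ to standard form in $\MPplusZ$ via Lemma~\ref{lem:enlargen} and Proposition~\ref{prop:toke}, since $\Bsf_{E_{u,-}} \in \MPZccc$ need not lie in $\MPplusZ$, mimicking the detour inside the proof of Proposition~\ref{prop:cuntzsplicetwice} — to obtain $\Phi_1 : C^*(E_{u,-}) \otimes \K \to C^*((E_{u,-})_{v_1,--}) \otimes \K$ with an explicit \SLPEe. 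Applying Proposition~\ref{prop:cuntzsplicetwice} to $E_{u,--}$ at $w_3$ similarly yields $\Phi_2 : C^*(E_{u,--}) \otimes \K \to C^*((E_{u,--})_{w_3,--}) \otimes \K$. Since $(E_{u,-})_{v_1,--}$ and $(E_{u,--})_{w_3,--}$ still differ by a further Cuntz splice, I would apply Proposition~\ref{prop:cuntzsplicetwice} once more to the common graph $(E_{u,-})_{v_1,--} = (E_{u,--})_{w_3,-}$ at a vertex of the newly attached Cuntz splice pair, then combine the three resulting \SLPEe{s} using Boyle's decomposition theorem \cite{MR1907894} together with Proposition~\ref{prop:toke} to assemble the desired $\calP$-equivariant stable isomorphism $\Phi : C^*(E_{u,-}) \otimes \K \to C^*(E_{u,--}) \otimes \K$, which then descends to an unstabilized isomorphism by fullness of the relevant corner projections.

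The main obstacle is the sign reconciliation: every application of Proposition~\ref{prop:cuntzsplicetwice} induces an \SLPEe, so a naive composition gives determinant $+1$ in every block, which contradicts the desired $\det(V\{j\}) = -1$. The resolution must come from the interaction between the various $\iota$-embeddings used to match matrix sizes along the composition; the single Cuntz splice difference between $(E_{u,-})_{v_1,--}$ and $(E_{u,--})_{w_3,--}$ encodes a hidden sign flip under the relevant identifications of cokernels, and tracking it through the composed $K$-theory maps — by the pattern of kernel and cokernel computations used in the induced-map verifications of Propositions~\ref{prop:toke} and~\ref{prop:edge-expansion} — is the technical heart of the proof. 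Once this bookkeeping matches the explicit matrix $(U,V)$ from the first step, the theorem follows.
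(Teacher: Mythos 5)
Your matrix step is fine in outline (and your determinant observation $\det\Bsf_{\mathbf{E}_*}=-1$, $\det\Bsf_{\mathbf{E}_{**}}=+1$ correctly identifies why the $j$'th block of $V$ must have determinant $-1$), but the construction of $\Phi$ has a genuine gap that is not repairable along the lines you sketch. After your two legitimate applications of Proposition~\ref{prop:cuntzsplicetwice} you are left comparing $G:=(E_{u,-})_{v_1,--}=(E_{u,--})_{w_3,-}$ with $(E_{u,--})_{w_3,--}=G_{v_1'',-}$, and these two graphs differ by a \emph{single} Cuntz splice --- which is exactly the statement you are trying to prove. Proposition~\ref{prop:cuntzsplicetwice} only ever relates a graph to its \emph{double} splice, so the ``one more application at a vertex of the newly attached pair'' cannot close this gap; you have merely pushed the problem one splice further out. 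The obstruction is structural, not bookkeeping: every tool available to you (Propositions~\ref{prop:toke}, \ref{prop:edge-expansion}, \ref{prop:cuntzsplicetwice}, and Boyle's factorization theorem) produces an \SLPEe, all of whose diagonal blocks have determinant $+1$, and a composition of \SLPEes is again an \SLPEe. No amount of interaction between the $\iota$-embeddings can manufacture the required block of determinant $-1$; the ``hidden sign flip'' you appeal to does not exist within that calculus. (Two further, smaller issues: the theorem asserts an \emph{unstabilized} isomorphism $C^*(E_{u,-})\to C^*(E_{u,--})$, which fullness of corners alone does not yield; and the theorem is stated for $\Bsf_E\in\MPZccc$ allowing sinks and infinite emitters, whereas your reduction to $\MPplusZ$ via Lemma~\ref{lem:enlargen} requires finite graphs with no sinks or sources.)

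The paper's proof is of an entirely different nature and is necessarily non-formal at this point. It invokes the construction from the Cuntz-splice paper \cite{arXiv:1602.03709v2}, in which a Cuntz--Krieger $E_{u,-}$-family $\mathcal{S}=\{q_v,t_e\}$ and a Cuntz--Krieger $E_{u,--}$-family $\mathcal{T}=\{P_v,S_e\}$ are exhibited inside one ambient \ca $\A$ with $C^*(\mathcal{S})=C^*(\mathcal{T})$, so that $\Phi$ is literally the identity map. The entire content of the proof is then the computation that this identity, read through the canonical identifications of $K_0$ with cokernels and $K_1$ with kernels, equals $\FKR(U,V)$ for the explicit $(U,V)$: on $K_0$ this is the observation that $[q_{v_1}]_0=[P_{w_1}]_0=[P_{w_3}]_0=[P_{w_4}]_0=0$ and $[q_{v_2}]_0=-[q_u]_0=[P_{w_2}]_0$, and on $K_1$ it is a direct comparison of the unitaries $\mathsf{U}_{\mathbf{x}}$ and $\tilde{\mathsf{U}}_{\mathbf{x}}$ of \cite{MR2922394}, showing $[\mathsf{U}_{\mathbf{x}}]_1=[\tilde{\mathsf{U}}_{\mathbf{x}}]_1$ by exhibiting $z\mathsf{U}_{\mathbf{x}}z^*\tilde{\mathsf{U}}_{\mathbf{x}}^*$ as a unitary over a subalgebra $\mathcal{E}$ with $K_1(\mathcal{E})=0$. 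If you want to prove the theorem, you must engage with that representation-level argument; it cannot be deduced from the determinant-one moves established elsewhere in the paper.
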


\begin{proof}
In the proof of \cite[Theorem~\ref{CS-t:cuntz-splice-1}]{arXiv:1602.03709v2} we found two representations of $C^*(E_{u,-})$ and $C^*(E_{u,--})$ inside a \ca $\A$, and showed that they are in fact equal. 
Now, we will produce $U$ and $V$ such that the identity isomorphism is exactly given by the isomorphism $\FKR(U,V)$ --- when we use the canonical identifications of the $K$-theory. We refer the reader to the detailed definitions in the proof of \cite[Theorem~\ref{CS-t:cuntz-splice-1}]{arXiv:1602.03709v2}. 

Let $U\in\SLPZ[\mathbf{m}+4\mathbf{e}_j]$ and $V\in\GLPZ[\mathbf{n}+4\mathbf{e}_j]$ be the identity matrix everywhere except for the $j$'th diagonal block where they are 
\begin{align*}
U_0&=\begin{pmatrix}
I & 0 \\ 
0 & \begin{smallpmatrix}1 & 0 & 0 & 1 \\ 0 & 1 & 0 & 0 \\ 0 & 0 & 1 & 0 \\ 0 & 0 & 0 & 1\end{smallpmatrix}
\end{pmatrix}, \\ 
V_0 &= \begin{pmatrix}
I& 0 \\ 
0 & \begin{smallpmatrix}1 & 0 & 0 & 0 \\ 0 & 1 & 0 & 0 \\ -1 & 0 & 0 & -1 \\ 0 & 0 & -1 & 0\end{smallpmatrix}
\end{pmatrix},
\end{align*}
respectively. It is clear that all the diagonal blocks of $U$ and $V$ have determinant $1$ except for the $j$'th diagonal block of $V$, which has determinant $-1$. Moreover, it is easy to see that $(U,V)$ is a \GLPEe from $-\iota_{2\mathbf{e}_j}(-\Bsf_{E_{u,-}}^\bullet)$ to  $\Bsf_{E_{u,--}}^\bullet$ (where we let the order of the added vertices follow the orders $v_1,v_2$ and $w_1,w_2,w_3,w_4$, respectively). 
We will show that the identity isomorphism is exactly given by the isomorphism $\FKR(U,V)$ --- when we use the canonical identifications of the $K$-theory. 

Recall from the proof of \citefirst[Theorem~\ref{CS-t:cuntz-splice-1}] that $\{q_v,t_e\}$ is the Cuntz-Krieger $E_{u,-}$-family in $\A$ that was constructed and $\{P_v,S_e\}$ is the Cuntz-Krieger $E_{u,--}$-family in $\A$ that was constructed, and that we call these two families $\mathcal{S}$ and $\mathcal{T}$, respectively.  Moreover, since $E$ has finitely many vertices, these \cas are all unital sub-\cas.  

It is clear that we have that $[q_{v_1}]_0=[P_{w_1}]_0=[P_{w_3}]_0=[P_{w_4}]_0=0$ and $[q_{v_2}]_0=-[q_u]_0=-[P_u]_0=[P_{w_2}]_0$ in $K_0(\mathfrak{I})$ for any gauge invariant ideal $\mathfrak{I}$ containing $q_{v_i}$, $P_{w_j}$, $i=1,2$, $j=1,2,3,4$, and similarly for subquotients. 
Since we have $P_v=q_v$ for all $v\in E^0$, it now follows that this \GLPEe induces exactly the desired maps on the $K_0$-groups. 

Now we need to show that on the gauge simple subquotients, we get the right identification of $K_1$. For all subquotients except for the $j$'th, this is clear. 
For notational convenience, assume that $u$ corresponds to the last row of the diagonal block $\Bsf_E^\bullet\{j\}$, \ie, the $m_j$'th row. 
We may also without loss of generality assume that $j$ is the only component of $E$. 
Now let $\mathbf{x}=(x_i)_{i=1}^{m_j+4}\in\ker \left(-\iota_{2\mathbf{e}_j}\left(-\Bsf_{E_{u,-}}^\bullet\right)\right)^\mathsf{T}$. 
Note that $x_{m_j+1}=x_{m_j+3}=x_{m_j+4}=0$ and $x_{m_j}+x_{m_j+2}=0$. 
Note also that this implies that $\mathbf{x}\in\ker \left(\Bsf_{E_{u,--}}^\bullet\right)^\mathsf{T}$.
Now we construct the partial isometries $\mathsf{S}_\mathbf{x}$ and $\tilde{\mathsf{S}}_\mathbf{x}$, the projections $\mathsf{P}_\mathbf{x}=\mathsf{S}_\mathbf{x}^*\mathsf{S}_\mathbf{x}
=\mathsf{S}_\mathbf{x}\mathsf{S}_\mathbf{x}^*$,
$\tilde{\mathsf{P}}_\mathbf{x}
=\tilde{\mathsf{S}}_\mathbf{x}^*\tilde{\mathsf{S}}_\mathbf{x}
=\tilde{\mathsf{S}}_\mathbf{x}\tilde{\mathsf{S}}_\mathbf{x}^*$ and the unitaries 
$\mathsf{U}_\mathbf{x}=\mathsf{S}_\mathbf{x}+1-\mathsf{P}_\mathbf{x}$ and 
$\tilde{\mathsf{U}_\mathbf{x}}
=\tilde{\mathsf{S}}_\mathbf{x}+1-\tilde{\mathsf{P}}_\mathbf{x}$ in 
matrix algebras over $C^*(\mathcal{S})=C^*(\mathcal{T})$ as in \cite{MR2922394}, where the one without a tilde correspond to the family $\mathcal{S}$ while the ones with a tilde correspond to the family $\mathcal{T}$. 
From the matrix $(U_0^\mathsf{T})^{-1}$ we see that what we need to show is that 
$[\mathsf{U}_\mathbf{x}]_1=[\tilde{\mathsf{U}}_\mathbf{x}]_1$. 

In \cite{MR2922394} we have the following definitions:
\begin{align*}
L_v^+&=\setof{(e,i)}{e\in E_{u,-}^1,r(e)=v, 1\leq i\leq -x_{s(e)}}\cup\setof{(v,i)}{1\leq i\leq x_v}, \\
L_v^-&=\setof{(e,i)}{e\in E_{u,-}^1,r(e)=v, 1\leq i\leq x_{s(e)}}\cup\setof{(v,i)}{1\leq i\leq -x_v}, \\ 
\tilde{L}_w^+&=\setof{(f,k)}{e\in E_{u,--}^1,r(e)=w, 1\leq k\leq -x_{s(f)}}\cup\setof{(w,k)}{1\leq k\leq x_w}, \\
\tilde{L}_w^-&=\setof{(f,k)}{e\in E_{u,--}^1,r(e)=w, 1\leq k\leq x_{s(e)}}\cup\setof{(w,k)}{1\leq k\leq -x_w},
\end{align*}
for $v\in E_{u,-}^0$ and all $w\in E_{u,--}^0$. 
Since $x_{m_j+1}=x_{m_j+3}=x_{m_j+4}=0$, neither any of the edges $d_2$, $e_1$, $e_2$, $f_1$, $f_2$, $f_5$, $f_6$, $f_7$, $f_8$, $f_9$, $f_{10}$ nor any of the vertices $v_1$, $w_1$, $w_3$, $w_4$ will appear in any such pair. 
It is clear that $L_v^+=\tilde{L}_v^+$ and $L_v^-=\tilde{L}_v^-$ for all $v\in E^0$.
Also we see that $L_{w_i}^+=\emptyset=\tilde{L}_{w_i}^-$ for $i=3,4$. 
The substitutions $v_2\mapsto w_2$, $e_3\mapsto f_3$ and $e_4\mapsto f_4$ will define canonical bijections from $L_{v_i}^+$ to $\tilde{L}_{w_i}^+$ and from $L_{v_i}^-$ to $\tilde{L}_{w_i}^-$, for $i=1,2$. 

Thus the size of $\mathsf{U}_\mathbf{x}$ and $\tilde{\mathsf{U}}_\mathbf{x}$ is the same. 
We want to show that $[\mathsf{U}_\mathbf{x}]_1=[\tilde{\mathsf{U}}_\mathbf{x}]_1$.  Since $[\mathsf{U}_{-\mathbf{x}}]_1=-[\mathsf{U}_{\mathbf{x}}]_1$ and $[\tilde{\mathsf{U}}_{-\mathbf{x}}]_1=-[\tilde{\mathsf{U}}_{\mathbf{x}}]_1$, we may assume that $x_u=x_{m_j}\geq 0$.
With a slight abuse of notation, we let $z$ denote the diagonal matrix of this matrix algebra having $z$ as all diagonal entries. 
Using the definitions and matrix units from \cite{MR2922394}, we see that
\begin{align*}
z\mathsf{U}_\mathbf{x}z^*-\tilde{\mathsf{U}}_\mathbf{x}
&=\sum_{1\leq i\leq x_u}\Big(
\left(zt_{e_3}^*z^*-s_{f_3}^*\right)\mathsf{E}_{[e_3,i],\langle v_2,i\rangle} 
+\left(zt_{e_4}^*z^*-s_{f_4}^*\right)\mathsf{E}_{[e_4,i],\langle v_2,i\rangle} \\
&\qquad +\left(P_{w_2}-zq_{v_2}z^*\right)\mathsf{E}_{\langle v_2,i\rangle,\langle v_2,i\rangle}
\Big).
\end{align*}
Now we see that
\begin{align*}
z\mathsf{U}_\mathbf{x}z^*\tilde{\mathsf{U}}_\mathbf{x}^*
&=1+(z\mathsf{U}_\mathbf{x}z^*-\tilde{\mathsf{U}}_\mathbf{x})\tilde{\mathsf{U}}_\mathbf{x}^* \\ 
&=1+\sum_{1\leq i\leq x_u}\bigg(
\sum_{j=3}^4\Big(
\left(zt_{e_j}^*z^*-zt_{e_j}^*z^*P_{w_2}\right)\mathsf{E}_{[e_j,i],\langle v_2,i,\rangle}
+\sum_{k=3}^4\left(zt_{e_j}^*z^*-s_{e_j}^*\right)s_{e_k}\mathsf{E}_{[e_j,i],[e_k,i]}
\Big)\bigg).
\end{align*}
Note that this is in fact a unitary in $M_h(\A)$ of the form $1_{M_h(\A_0)}+U$ where $U$ is the unitary 
\begin{align*}
U&=1_{M_h(\mathcal{E})}+\sum_{1\leq i\leq x_u}\bigg(
\sum_{j=3}^4\Big(
\left(zt_{e_j}^*z^*-zt_{e_j}^*z^*P_{w_2}\right)\mathsf{E}_{[e_j,i],\langle v_2,i,\rangle}
+\sum_{k=3}^4\left(zt_{e_j}^*z^*-s_{e_j}^*\right)s_{e_k}\mathsf{E}_{[e_j,i],[e_k,i]}
\Big)\bigg)
\end{align*}
in $M_h(\mathcal{E})$ and $h$ is the number of elements in $\mathsf{L}_\mathbf{x}^+$. 
Since $K_1(\mathcal{E})=0$ it follows from the split exactness of $K_1$ that 
$[z\mathsf{U}_\mathbf{x}z^*\tilde{\mathsf{U}}_\mathbf{x}^*]_1=0$ in $K_1(\A)$. 
Since the embedding of \A into $C^*(\mathcal{S})=C^*(\mathcal{T})$ is unital, it follows that $[z\mathsf{U}_\mathbf{x}z^*\tilde{\mathsf{U}}_\mathbf{x}^*]_1=0$ in $K_1(C^*(\mathcal{S}))$. 
Thus $[\mathsf{U}_\mathbf{x}]_1=[\tilde{\mathsf{U}}_\mathbf{x}]_1$ in $K_1(C^*(\mathcal{S}))$. 
\end{proof}

By combining the above, we get the following important result. 

\begin{corollary}\label{cor:cuntzspliceinvariant}
Let $E$ be a finite graph with no sources and no sinks satisfying Condition~(K) 
such that $\Bsf_E \in \MPplusZ[\mathbf{n}]$. Let $u$ be a vertex in $E^0$ and assume that $u$ belongs to the block $j\in\calP$.  
Then $\Bsf_{E_{u,-}}\in\MPZccc[\mathbf{n}+2\mathbf{e}_j]$, and there exist $U\in\SLPZ[\mathbf{n}+2\mathbf{e}_j]$ and $V\in\GLPZ[\mathbf{n}+2\mathbf{e}_j]$ such that the following holds. The matrices $U$ and $V$ are equal to the identity matrices everywhere except for the $j$'th diagonal block. All the diagonal blocks of $U$ and $V$ have determinant $1$ except for the $j$'th diagonal block of $V$, which has determinant $-1$ and $(U,V)$ is a \GLPEe from $-\iota_{2\mathbf{e}_j}(-\Bsf_{E})$ to  $\Bsf_{E_{u,-}}$. 
Moreover, there exists a $\calP$-equivariant isomorphism $\Phi$ from $C^{*}(E) \otimes \K$ to $C^{*}(E_{u,-}) \otimes \K$ such that $\FKR(\calP;\Phi)=\FKRs(U,V)$. 
\end{corollary}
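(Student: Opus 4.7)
The plan is to deduce Corollary~\ref{cor:cuntzspliceinvariant} by combining Proposition~\ref{prop:cuntzsplicetwice} (the twice-Cuntz-spliced result) with Theorem~\ref{thm:cuntz-splice-1} (which relates the once- and twice-Cuntz-spliced graphs). As a first step I verify that $\Bsf_{E_{u,-}}\in\MPZccc[\mathbf{n}+2\mathbf{e}_j]$. Since $\Bsf_E\in\MPplusZ[\mathbf{n}]$, the graph $E$ has no sinks, no sources, no transition states and no cyclic components; the Cuntz splice at $u$ attaches the two-vertex graph $\mathbf{E}_*$ to $u$ via two edges, so $v_1,v_2$ lie in the component of $u$, hence in block $j$, and none of these properties is destroyed. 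In particular $u$ remains regular in $E_{u,-}$, so Theorem~\ref{thm:cuntz-splice-1} applies to the vertex $u$ of $E_{u,-}$.

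Next, Proposition~\ref{prop:cuntzsplicetwice} provides an \SLPEe $(U_1,V_1)$ from $-\iota_{4\mathbf{e}_j}(-\Bsf_E)$ to $\Bsf_{E_{u,--}}$ and a $\calP$-equivariant isomorphism $\Phi_1\colon C^{*}(E)\otimes\K \to C^{*}(E_{u,--})\otimes\K$ with $\FKR(\calP;\Phi_1)=\FKRs(U_1,V_1)$. Applying Theorem~\ref{thm:cuntz-splice-1} to $E_{u,-}$ at $u$ produces a \GLPEe $(U_2,V_2)$ from $-\iota_{2\mathbf{e}_j}(-\Bsf_{E_{u,-}})$ to $\Bsf_{E_{u,--}}$, equal to the identity outside block $j$, whose diagonal blocks all have determinant $1$ except $\det V_2\{j\}=-1$, together with a $\calP$-equivariant isomorphism $\Phi_2\colon C^{*}(E_{u,-}) \to C^{*}(E_{u,--})$ with $\FKR(\calP;\Phi_2)=\FKR(U_2,V_2)$. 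Set $\Phi := (\Phi_2\otimes\id_\K)^{-1}\circ \Phi_1\colon C^{*}(E)\otimes\K \to C^{*}(E_{u,-})\otimes\K$; this is a $\calP$-equivariant isomorphism and, by functoriality of reduced filtered $K$-theory, satisfies $\FKR(\calP;\Phi)=\FKRs(U_2^{-1}U_1,\,V_1V_2^{-1})$, so that $(U_2^{-1}U_1,\,V_1V_2^{-1})$ is a \GLPEe from $-\iota_{4\mathbf{e}_j}(-\Bsf_E)$ to $-\iota_{2\mathbf{e}_j}(-\Bsf_{E_{u,-}})$, equal to the identity outside block $j$, with $\det(U_2^{-1}U_1)\{j\}=1$ and $\det(V_1V_2^{-1})\{j\}=-1$.

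The main obstacle is the descent from size $\mathbf{n}+4\mathbf{e}_j$ to the required $\mathbf{n}+2\mathbf{e}_j$. I plan to carry this out by direct matrix computation using the explicit forms of $U_1,V_1$ (from the proof of Proposition~\ref{prop:cuntzsplicetwice}) and $U_2,V_2$ (from Theorem~\ref{thm:cuntz-splice-1}). A short calculation shows that in block $j$ both $U_2^{-1}U_1$ and $V_1V_2^{-1}$ have the block form $\mathrm{diag}(A,I_2)$ with $A$ of size $m_j+2$, the trailing $I_2$ blocks being exactly the two isolated vertices adjoined by the outermost $\iota_{2\mathbf{e}_j}$ on each side; moreover the top $A$-block for $U_2^{-1}U_1$ has determinant $1$ and that for $V_1V_2^{-1}$ has determinant $-1$. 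Extracting these $A$-blocks yields matrices $U\in\SLPZ[\mathbf{n}+2\mathbf{e}_j]$ and $V\in\GLPZ[\mathbf{n}+2\mathbf{e}_j]$, identity outside block $j$, with the required determinant conditions, and $(U,V)$ is automatically a \GLPEe from $-\iota_{2\mathbf{e}_j}(-\Bsf_E)$ to $\Bsf_{E_{u,-}}$ because deleting the trivial $I_2$ blocks just removes the two isolated diagonal $-1$ entries on each side of the matrix equation. Finally, $\FKR(\calP;\Phi)=\FKRs(U,V)$ follows because $\iota_{2\mathbf{e}_j}$ adjoins only isolated vertices and hence induces a canonical identification of reduced filtered $K$-theory, under which $\FKRs(U_2^{-1}U_1,V_1V_2^{-1})$ is taken to $\FKRs(U,V)$.
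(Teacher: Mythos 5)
Your argument is correct and is essentially the paper's own proof: the paper likewise composes the \SLPEe of Proposition~\ref{prop:cuntzsplicetwice} with the \GLPEe of Theorem~\ref{thm:cuntz-splice-1} and observes by direct computation that $U_2^{-1}U_1$ and $V_1V_2^{-1}$ are of the form $\iota_{2\mathbf{e}_j}(U)$ and $\iota_{2\mathbf{e}_j}(V)$, which simultaneously yields the descent to size $\mathbf{n}+2\mathbf{e}_j$ and the determinant conditions. One small slip: Theorem~\ref{thm:cuntz-splice-1} should be applied to $E$ at $u$ (its hypotheses, namely $\Bsf_E\in\MPZccc$ and $u$ regular, are immediate from $\Bsf_E\in\MPplusZ[\mathbf{n}]$ and the absence of sinks), not to $E_{u,-}$ at $u$ --- its conclusion then concerns the pair $(E_{u,-},E_{u,--})$ exactly as you use it, so the substance of your proof is unaffected.
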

\begin{proof}
It follows from Theorem~\ref{thm:cuntz-splice-1} and Proposition~\ref{prop:cuntzsplicetwice} that the statement holds if we say that the \calP-equivariant isomorphism $\Phi$ is induced by such a \GLPEe from $-\iota_{4\mathbf{e}_j}(-\Bsf_{E})$ to  $-\iota_{2\mathbf{e}_j}(-\Bsf_{E_{u,-}})$. 
Let $(U_1,V_1)$ be the \SLPEe from the proof of Proposition~\ref{prop:cuntzsplicetwice} and let $(U_2,V_2)$ be the \GLPEe from the proof of Theorem~\ref{thm:cuntz-splice-1}. 
By computing $U_2^{-1}U_1$ and $V_1V_2^{-1}$ we see that these are in fact of the form $\iota_{2\mathbf{e}_j}(U)$ and $\iota_{2\mathbf{e}_j}(V)$, respectively, for matrices $U\in\SLPZ[\mathbf{n}+2\mathbf{e}_j]$ and $V\in\GLPZ[\mathbf{n}+2\mathbf{e}_j]$ of the desired form. 
\end{proof}

\begin{theorem}\label{thm:X-stable-strong}
Let $A$ and $A'$ be nondegenerate $\{0,1\}$-matrices satisfying Condition~(II). 
Let $X=\Prim(\Ocal_A)$ and assume that there is a homeomorphism between $\Prim(\Ocal_A)$ and $\Prim(\Ocal_{A'})$, and use this homeomorphism to view $\Ocal_A\otimes\K$ and $\Ocal_{A'}\otimes\K$ as $X$-algebras. 
Assume that $\fct{\varphi}{\FKR(X;\Ocal_A\otimes\K)}{\FKR(X;\Ocal_{A'}\otimes\K)}$ is an isomorphism. 
Then there exists an $X$-equivariant isomorphism $\fct{\Phi}{\Ocal_A\otimes\K}{\Ocal_{A'}\otimes\K}$ such that $\FKR(X;\Phi)=\varphi$. 
\end{theorem}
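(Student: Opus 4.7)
The plan is to carry out the six-step strategy outlined in Section~\ref{sec:main}. Step~1 is built into the hypotheses: the given homeomorphism already lets us view $\Ocal_{A'}\otimes\K$ as an $X$-algebra. For Steps~2--3, I would apply Lemma~\ref{lem:stdform} to replace each Cuntz-Krieger algebra by the stabilization of a graph \ca $C^*(E)$, where $E$ is a finite graph with no sinks or sources satisfying Condition~(K) and $\Bsf_E\in\MPplusZ[\mathbf{n}]$ (and similarly for $A'$). Then I would iterate Lemma~\ref{lem:enlargen} to enlarge these graphs one component at a time until the two matrices live in $\MPplusZ[\mathbf{n}_1]$ for a common multi-index. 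This gives $\calP$-equivariant stable isomorphisms to new graph \cas $C^*(E_1)\otimes\K$ and $C^*(E_1')\otimes\K$, and transports $\varphi$ to an isomorphism $\varphi_1\colon\FKR(\calP,C^*(E_1))\to\FKR(\calP,C^*(E_1'))$.

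For Step~4, since both matrices lie in $\MPplusZ[\mathbf{n}_1]$, every diagonal block has rank at least two and hence $\gcd\in\{0,1\}$, so Corollary~\ref{cor:BH-4.6-B} applies and lifts $\varphi_1$ to a \GLPEe $(U,V)\colon\Bsf_{E_1}\to\Bsf_{E_1'}$ realizing $\varphi_1$.

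For Step~5, the obstruction to having an \SLPEe is that some diagonal blocks $V\{i\}$ may have determinant $-1$. For each such $i$, I would invoke Corollary~\ref{cor:cuntzspliceinvariant} on both sides at a vertex in component $i$: each Cuntz splice produces a $\calP$-equivariant stable isomorphism together with a \GLPEe whose diagonal blocks all have determinant $1$ except for the $i$-th block of $V$, which has determinant $-1$. Sandwiching $(U,V)$ between the two resulting \GLPEes, the signs cancel block by block, yielding graphs $E_2,E_2'$, $\calP$-equivariant stable isomorphisms $C^*(E_1)\otimes\K\cong C^*(E_2)\otimes\K$ and $C^*(E_1')\otimes\K\cong C^*(E_2')\otimes\K$, and a genuine \SLPEe $(U_2,V_2)\colon\Bsf_{E_2}\to\Bsf_{E_2'}$ inducing the composed reduced filtered $K$-theory isomorphism on $\FKRs$.

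Finally, for Step~6 I would apply \cite[Theorem~4.4]{MR1907894} to factor $(U_2,V_2)$ as a composition of \SLPEes given by basic positive elementary row- and column-addition matrices, and use Proposition~\ref{prop:toke} to realize each factor as a $\calP$-equivariant stable isomorphism inducing exactly the corresponding map on $\FKRs$. Stringing all these stable isomorphisms together with those produced in Steps~3 and~5 yields the desired $X$-equivariant \stariso $\Phi$ with $\FKR(X;\Phi)=\varphi$. The main obstacle I anticipate is the bookkeeping in Step~5 (ensuring that Cuntz-splicing on both sides cancels the determinant signs in precisely the right blocks while producing the correct composite on the reduced filtered $K$-theory) and verifying that the intermediate matrices in the Boyle factorization remain in $\MPZccc$ and satisfy the regularity and multi-edge conditions needed to invoke Proposition~\ref{prop:toke}.
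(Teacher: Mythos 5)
Your overall architecture matches the paper's proof: standard form via Lemma~\ref{lem:stdform}, size-matching via Lemma~\ref{lem:enlargen}, lifting to a \GLPEe via Corollary~\ref{cor:BH-4.6-B}, Cuntz splicing to repair determinants, and then Boyle's positive factorization \cite[Theorem~4.4]{MR1907894} together with Proposition~\ref{prop:toke}. However, your Step~5 contains a genuine error in the determinant bookkeeping. First, you describe the obstruction as ``some diagonal blocks $V\{i\}$ may have determinant $-1$,'' but an \SLPEe requires \emph{both} $U\{i\}$ and $V\{i\}$ to have determinant $+1$, and Corollary~\ref{cor:cuntzspliceinvariant} only ever produces pairs with $\det U\{i\}=+1$ and $\det V\{i\}=-1$; Cuntz splicing can therefore never flip the sign of a $U$-block. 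Second, splicing \emph{on both sides} of component $i$ accomplishes nothing: when \GLPEes are composed the $V$-blocks multiply, so the composite determinant in block $i$ is $(-1)\cdot\det V\{i\}\cdot(-1)=\det V\{i\}$, and the $U$-block determinant is likewise unchanged. The signs do not cancel.

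The paper's actual mechanism is two-staged and you need both stages. In the first stage one splices on \emph{one} side only, for exactly those $i$ with $\det U\{i\}\neq\det V\{i\}$; composing contributes a single factor of $-1$ to $\det V\{i\}$ and leaves $\det U\{i\}$ alone, so afterwards $\det U\{i\}=\det V\{i\}$ for all $i$ (possibly both $-1$). In the second stage the remaining blocks with $\det U\{i\}=\det V\{i\}=-1$ are handled by a completely different device that your proposal omits: one enlarges both matrices by $\iota_{\mathbf r}$ with $\mathbf r=(2,\dots,2)$ and interchanges the last two rows of the offending blocks in \emph{both} $U$ and $V$, which flips both determinants to $+1$ without changing the induced map on reduced filtered $K$-theory. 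Without this row-swap step your argument cannot terminate in an \SLPEe whenever some block has $\det U\{i\}=\det V\{i\}=-1$. (Your anticipated concern about keeping the intermediate matrices in $\MPplusZ[\mathbf n]$ for Boyle's theorem is legitimate and is resolved in the paper by further row and column additions in the style of Lemma~\ref{lem:enlargen}, justified by Proposition~\ref{prop:toke}.)
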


\begin{proof}
\textbf{Step 1: }
According to Lemma~\ref{lem:stdform}, we may consider two finite graphs $E$ and $F$ with no sinks and no sources satisfying Condition~(K) such that $\Bsf_E\in\MPplusZ[\mathbf{n}_E]$ and $\Bsf_F\in\MPplusZ[\mathbf{n}_F]$ and then prove the corresponding result for the graph \cas $C^{*}(E)$ and $C^{*}(F)$. 
We choose the block structure in such a way that it is over the same poset \calP, and such that the block structures are compatible with the homeomorphism between $\Prim(\Ocal_A)$ and $\Prim(\Ocal_{A'})$, \cf\ the discussing right after \citefirst[Definition~\ref{I-def:circ}].

\textbf{Step 2: }
According to Lemma~\ref{lem:enlargen}, we may, moreover, assume that $\Asf_E$ and $\Asf_F$ have the same block structure. So we write $\Bsf_E,\Bsf_F\in\MPplusZ[\mathbf{n}]$, for some multiindex $\mathbf{n}$. 

\textbf{Step 3: }
It follows from \citefirst[Section~\ref{I-sec:red-filtered-K-theory-K-web-GLP-and-SLP-equivalences}] and Corollary~\ref{cor:BH-4.6-B} that there exists a \GLPEe \fctw{(U,V)}{\Bsf_E}{\Bsf_F} such that $\varphi=\FKRs(U,V)$. 

\textbf{Step 4: }
For each $i\in\calP$ with $\det(U\{i\})\neq\det(V\{i\})$, we do as follows. 
Assume that $j\in\calP$ with $\det(U\{j\})\neq\det(V\{j\})$. 
Let $v$ be a vertex in the component corresponding to $j$. 
Cuntz-splice the graph $E$ at vertex $v$, and call the new graph $E_1$. Note that 
$\Bsf_{E_1}\in\MPZ[\mathbf{n}+2\mathbf{e}_j]$. 
Corollary~\ref{cor:cuntzspliceinvariant} gives us an $X$-equivariant isomorphism  \fctw{\Phi_1}{C^{*}(E_1)\otimes\K}{C^{*}(E)\otimes\K} and a \GLPEe \fctw{(U_1,V_1)}{\Bsf_{E_1}}{-\iota_{2\mathbf{e}_j}(-\Bsf_{E})} such that $\FKR(X;\Phi_1)=\FKRs(U_1,V_1)$, $\det(U_1\{j\})\neq\det(V_1\{j\})$ and $\det(U_1\{j\})=\det(V_1\{j\})=1$, for all $i\neq j$. 
By composing the \GLPEe{s} $(U_1,V_1)$ and $(U,V)$ and using column and row additions together with Proposition~\ref{prop:toke} in the same way as in the proof of Lemma~\ref{lem:enlargen}, we see that we may without loss of generality assume that $\det(U\{i\})=\det(V\{i\})$, for all $i\in\calP$.

\textbf{Step 5: }
Consequently, in addition to the assumptions on $E$ and $F$, we now assume that $\det(U\{i\})=\det(V\{i\})$, for all $i\in\calP$. 
Let $\mathbf{r}$ be the multiindex where all entries are $2$. 
Now let $U',V'\in\SLPZ[\mathbf{n}+\mathbf{r}]$ be given as follows. 
We obtain $U'$ from $\iota_\mathbf{r}(U)$ by interchanging the last two rows of each row-block $i$ that satisfies $\det(U\{i\})=-1$, and we similarly obtain $V'$ from $\iota_\mathbf{r}(V)$ by interchanging the last two rows of each row-block $i$ that satisfies $\det(V\{i\})=-1$. 
It is now clear that $(U',V')$ is an \SLPEe from $-\iota_\mathbf{r}(-\Bsf_E)$ to $-\iota_\mathbf{r}(-\Bsf_F)$ and it is also clear that $\FKRs(U,V)=\FKRs(U',V')$. 
By using column and row additions in the same way as in the proof of Lemma~\ref{lem:enlargen} together with Proposition~\ref{prop:toke} we see that we get two finite graphs $E'$ and $F'$ satisfying Condition~(K) with no sinks and no sources such that $\Bsf_{E'},\Bsf_{F'}\in\MPplusZ[\mathbf{n}+\mathbf{r}]$ and such that there exist \calP-equivariant isomorphisms \fctw{\Psi}{C^{*}(E)\otimes\K}{C^{*}(E')\otimes\K} and \fctw{\Psi'}{C^{*}(F)\otimes\K}{C^{*}(F')\otimes\K} with $\FKR(\calP;\Psi)=\FKRs(U_1,V_1)$ and $\FKR(\calP;\Psi')=\FKRs(U_2,V_2)$, where $(U_1,V_1)$ and $(U_2,V_2)$ are \SLPEe{s} from $-\iota_{\mathbf{r}}(-\Bsf_E)$ to $\Bsf_{E'}$ and from $-\iota_{\mathbf{r}}(-\Bsf_{F})$ to $\Bsf_{F'}$, respectively. Thus by composition, the problem is reduced to lifting the filtered $K$-theory isomorphism $\FKRs(\tilde{U},\tilde{V})$ induced by an \SLPEe $(\tilde{U},\tilde{V})=(U_2U'U_1^{-1},V_1^{-1}V'V_2)$ from $\Bsf_F$ to $\Bsf_{F'}$. 

\textbf{Step 6: }
Now the theorem follows from Proposition~\ref{prop:toke} and~\cite[Theorem~4.4]{MR1907894}.
\end{proof}

We now show how Theorem~\ref{thm:X-stable-strong} can be used to prove Theorem~\ref{thm:strong-stable}.

\begin{proof}[Proof of Theorem~\ref{thm:strong-stable}]
Let $A$ and $A'$ be nondegenerate matrices with entries from $\{0,1\}$ satisfying Condition~(II).  Suppose there exists an isomorphism \fctw{(\kappa,\rho)}{\FK_{\mathrm{red}}(\Ocal_A)}{\FK_{\mathrm{red}}(\Ocal_{A'})}.  Since $\rho$ is a homeomorphism from $\Prim(\Ocal_A)$ to $\Prim(\Ocal_{A'})$, we get that $(\Ocal_{A'},\rho^{-1})$ is a \ca over $\Prim(\Ocal_A)$.  Now, $\kappa$ becomes an isomorphism from $\FKR(\Prim(\Ocal_A);\Ocal_A)$ to $\FKR(\Prim(\Ocal_A);\Ocal_{A'})$.  By Theorem~\ref{thm:X-stable-strong}, there exists a $\Prim(\Ocal_A)$-equivariant isomorphism $\fct{\Psi}{\Ocal_A\otimes\K}{\Ocal_{A'}\otimes\K}$ such that $\FKR(\Prim(\Ocal_A);\Psi)=\kappa$ up to canonical identification.  By construction, $\Psi$ induces $\rho$.  Hence, $\fct{\Psi}{\Ocal_A\otimes\K}{\Ocal_{A'}\otimes\K}$ is an isomorphism such that $\FK^s_{\mathrm{red}}(\Psi)=(\kappa,\rho)$. 
\end{proof}

\begin{theorem}\label{thm:X-unital-strong}
Let $A$ and $A'$ be nondegenerate $\{0,1\}$-matrices satisfying Condition~(II). 
Let $X=\Prim(\Ocal_A)$ and assume that there is a homeomorphism between $\Prim(\Ocal_A)$ and $\Prim(\Ocal_{A'})$, and use this homeomorphism to view $\Ocal_A$ and $\Ocal_{A'}$ as $X$-algebras. 
Assume that $\fct{\varphi}{\FKR(X;\Ocal_A)}{\FKR(X;\Ocal_{A'})}$ is an isomorphism that preserves the class of the unit in $K_0$. 
Then there exists an $X$-equivariant isomorphism $\fct{\Phi}{\Ocal_A}{\Ocal_{A'}}$ such that $\FKR(X;\Phi)=\varphi$. 
\end{theorem}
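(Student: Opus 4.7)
The plan is to reduce the unital strong classification to the stable strong classification that was just established as Theorem~\ref{thm:X-stable-strong}, and then to invoke the general lifting result~\cite[Theorem~3.3]{arXiv:1301.7695v1}, which upgrades an $X$-equivariant stable isomorphism to an $X$-equivariant unital isomorphism whenever the induced $K_0$-map sends the class of the unit to the class of the unit. This is the same strategy indicated immediately after the statement of Theorem~\ref{thm:strong-unital}, now carried out in the $X$-equivariant setting.

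First I would pass from unital to stabilised data. The canonical corner embeddings $\Ocal_A \hookrightarrow \Ocal_A \otimes \K$ and $\Ocal_{A'} \hookrightarrow \Ocal_{A'} \otimes \K$ are $X$-equivariant and induce natural isomorphisms $\FKR(X;\Ocal_A) \cong \FKR(X;\Ocal_A \otimes \K)$ and $\FKR(X;\Ocal_{A'}) \cong \FKR(X;\Ocal_{A'} \otimes \K)$. Using these identifications, $\varphi$ corresponds canonically to an isomorphism $\widetilde{\varphi}\colon \FKR(X;\Ocal_A\otimes\K) \to \FKR(X;\Ocal_{A'}\otimes\K)$. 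By Theorem~\ref{thm:X-stable-strong}, there exists an $X$-equivariant \stariso $\Psi\colon \Ocal_A\otimes\K \to \Ocal_{A'}\otimes\K$ with $\FKR(X;\Psi) = \widetilde{\varphi}$.

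Next I would invoke the lifting theorem. The hypothesis that $\varphi$ preserves the class of the unit, together with Definition~\ref{def:red-filtered-Ktheory-class-unit} and the naturality of the corner identifications, says exactly that $K_0(\Psi)$ sends $[1_{\Ocal_A}\otimes e_{11}]_0$ to $[1_{\Ocal_{A'}}\otimes e_{11}]_0$ for a fixed rank-one projection $e_{11}\in\K$. Since $\Ocal_A$ and $\Ocal_{A'}$ are unital, separable, nuclear, purely infinite \cas in the UCT-class with finitely many ideals (Condition~(II)), they are properly infinite with stable weak cancellation, which are the standing hypotheses of~\cite[Theorem~3.3]{arXiv:1301.7695v1}. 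That theorem therefore produces a unitary $W$ in the multiplier algebra $\mathcal{M}(\Ocal_{A'}\otimes\K)$ such that $\mathrm{Ad}(W)\circ\Psi$ carries the corner $\Ocal_A\otimes e_{11}$ onto $\Ocal_{A'}\otimes e_{11}$; restriction to this corner yields an $X$-equivariant unital \stariso $\Phi\colon \Ocal_A\to \Ocal_{A'}$.

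Finally I would verify that $\FKR(X;\Phi) = \varphi$. This is immediate, because conjugation by the multiplier unitary $W$ acts as the identity on reduced filtered $K$-theory, and the corner embeddings used to restrict $\mathrm{Ad}(W)\circ\Psi$ to $\Phi$ implement precisely the natural identifications that were used to pass from $\varphi$ to $\widetilde{\varphi}$. The main obstacle is purely bookkeeping rather than mathematical: one has to track carefully how the corner identifications interact with the $X$-structure and with $\FKR$ to confirm that the restriction genuinely induces $\varphi$ (and not some other lift). All of the genuinely new content has already been absorbed into Theorem~\ref{thm:X-stable-strong}; the present theorem is a formal consequence of that result and the cited lifting meta-theorem.
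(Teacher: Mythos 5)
Your proposal is correct and follows essentially the same route as the paper: the paper's proof likewise combines Theorem~\ref{thm:X-stable-strong} with the unit-class condition (via Definition~\ref{def:red-filtered-Ktheory-class-unit} and \cite[Lemma~8.3]{MR3349327}) and then invokes \cite[Theorem~3.3]{arXiv:1301.7695v1}, citing \cite[Corollary 7.2]{MR2310414} for the stable weak cancellation hypothesis that you also identify. The paper states this in one sentence without the bookkeeping you spell out, but the content is the same.
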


\begin{proof}
This follows from Definition~\ref{def:red-filtered-Ktheory-class-unit}, Theorem~\ref{thm:X-stable-strong}, \cite[Lemma~8.3 and Definition~8.4]{MR3349327}, and \cite[Theorem~3.3]{arXiv:1301.7695v1} (by \cite[Corollary 7.2]{MR2310414}, row-finite graph \cas have stable weak cancellation property). 
\end{proof}

\noindent \emph{Proof of Theorem~\ref{thm:strong-unital}.}  This can be proved in a similar fashion to how Theorem~\ref{thm:strong-stable} was proved, using Theorem~\ref{thm:X-unital-strong} instead of Theorem~\ref{thm:X-stable-strong}.\qed

\section*{Acknowledgements}

This work was partially supported by a grant from the Simons Foundation (\# 279369 to Efren Ruiz).  Some of the work was conducted while
all three authors were attending the research program
\emph{Classification of operator algebras: complexity, rigidity, and                                                              
 dynamics} at the Mittag-Leffler Institute, January--April 2016. We
thank the institute and its staff for the excellent work conditions
provided. The second named author thanks University of Hawaii, Hilo for their hospitality during a visit, where part of this work was done.




\providecommand{\bysame}{\leavevmode\hbox to3em{\hrulefill}\thinspace}
\providecommand{\MR}{\relax\ifhmode\unskip\space\fi MR }
\providecommand{\MRhref}[2]{%
  \href{http://www.ams.org/mathscinet-getitem?mr=#1}{#2}
}
\providecommand{\href}[2]{#2}

\end{document}